\newtheorem{proposition}{Proposition}
\newtheorem{theorem}[proposition]{Theorem}
\newtheorem{lemma}[proposition]{Lemma}
\newtheorem{corollary}[proposition]{Corollary}
\newtheorem{definition}[proposition]{Definition}
\newtheorem{remark}[proposition]{Remark}
\newtheorem{conjecture}[proposition]{Conjecture}
\newenvironment{proof}{{\noindent \it Proof.}}{\hfill $\fbox{}$ \vspace*{5mm}}
\newcommand{\vertiii}[1]{{\left\vert\kern-0.25ex\left\vert\kern-0.25ex\left\vert #1
		\right\vert\kern-0.25ex\right\vert\kern-0.25ex\right\vert}}
\newcommand{\N}{\mathbb{N}}
\newcommand{\R}{\mathbb{R}}
\newcommand{\rf}{\mathfrak{r}}
\newcommand{\Lnormal}{\textnormal{L}}
\newcommand{\Lloc}{\textnormal{L}_{\textnormal{loc}}}
\numberwithin{proposition}{section}
\numberwithin{equation}{section}
\author{
Davide Bianchi\footnote{Partially supported by GNAMPA and GNSC}\\
Dipartimento di Scienza e Alta Tecnologia,
\\		Universit\`a dell'Insubria, \\
via Valleggio 11,
		22100 Como, Italy.
\\ email: d.bianchi9@uninsubria.it
\and
Alberto G. Setti\footnote{Partially supported by GNAMPA}
\\ Dipartimento di Scienza e Alta Tecnologia\\
		Universit\`a dell'Insubria, \\
via Valleggio 11,
		22100 Como, Italy.
\\ email: alberto.setti@uninsubria.it
}
\title{Laplacian cut-offs, porous and fast diffusion on manifolds and other applications}
\begin{document}
\maketitle
\thanks{2010 Mathematics Subject classification: 53C21, 58J35, 58J65, 35K55}

\thanks{Keywords: Cut-off and exhaustion functions, gradient estimates, porous and fast diffusion equations}

\begin{abstract}
We construct exhaustion and cut-off functions with controlled gradient and Laplacian on manifolds with Ricci curvature bounded from below by a (possibly unbounded) nonpositive function of the distance from a fixed reference point,  without any assumptions on the topology or the injectivity radius. Along the way we prove a generalization of the Li-Yau gradient estimate which is of independent interest. We then apply our cut-offs to the study  of the fast and porous media diffusion, of  $L^q$-properties of the gradient and of the self-adjointness of Schr\"odinger-type operators.
\end{abstract}

\section*{Introduction.}
Many analytic results in Euclidean setting require the use of compactly supported  cut-off functions, essentially to localize differential equations or inequalities or to perform integration by parts arguments. A key feature of $d$-dimensional Euclidean space is that it is possible to construct cut-offs $\{\phi_R\}$ such that $\phi_R=1$ on the ball $B_R(o)$, they are  supported in the ball $B_{\gamma R}(o)$ and have controlled derivatives up to second order:
\[
|\nabla \phi_R| \leq \frac{C}R,\qquad |\Delta \phi_R|\leq \frac{C}{R^2}
\]
where $C$ is a constant depending only on $\gamma$ and the dimension. Indeed, such cut-offs  can be defined in terms of the distance function $r$ from $o$, $r(x)=(\sum_i x_i^2)^{1/2}$, as
\[
\phi_R(x)=\psi(r(x)/R)
\]
where $\psi:\R\to [0,1]$ is smooth, identically $1$ in $(-\infty, 1]$ and vanishes in $[\gamma, +\infty)$, and the properties of $\phi_R$ listed above depend crucially on the fact that the distance function is proper and satisfies
\[
|\nabla r(r)| \leq C, \quad\ |\Delta r| \leq \frac C{r} \quad\left(\text{indeed, }\,\, |\nabla r(r)|= 1, \quad\ \Delta r= \frac {d-1}r\right) .
\]
A proper function is often referred to as an exhaustion function, and the existence of Euclidean cut-offs with the above properties is then a consequence of the fact that distance is a well-behaved exhaustion function on $\R^d$.

While in many instances a control on the gradient of the cut-off suffices, in many other significant situations it is actually vital to have an explicit uniform decay of $\Delta \phi_R$ in terms of $R$. We quote, for example,  spectral properties of Schr\"odinger-type operators (see, e.g., \cite{leinfelder1981schrodinger}), and, most notably from our point of view, the approximation procedures used in the proof of existence, uniqueness and qualitative and quantitative properties of solutions to the Cauchy problem for the porous and fast diffusion equations (\cite{benilan1982solutions}, \cite{herrero1985cauchy}, \cite{aronson1983initial}, \cite{vazquez2007porous}), which we will take up in the second part of the paper.

It follows that the extension of such Euclidean results to the setting of Riemannian manifolds will often depend on the existence of good families of cut-offs and well behaved exhaustion functions.

While it is well  known that exhaustion functions with a control on the gradient exist under the only assumption of geodesic completeness (see \cite{gaffney1959conservation,greene1979c,shubin2001essential}), uniform bounds on the second order derivatives typically require stronger geometric assumptions. For instance,  bounded sectional curvature and a uniform strictly positive lower bound on the injectivity radius allows to construct exhaustion functions with controlled Hessian, see \cite{donnelly1997exhaustion},\cite[pg. 61]{shubin1992spectral} and \cite[Proposition 26.49]{bennettricci}.  In a very recent paper, \cite{rimoldi2016extremals}, the authors refine the arguments in \cite{bennettricci} and show that the conclusions hold assuming only that the Ricci curvature is bounded and the injectivity radius is strictly bounded away from zero.

On the other hand, it was proved in \cite[Theorem 2.2]{guneysu2016sequences} that one can construct families of cut-off functions $\{\phi_R\}$ with a Euclidean like behavior  of $|\nabla\phi_R|$ and  $|\Delta \phi_R| $ in terms of $R$, provided the Ricci curvature is nonnegative.

This paper started trying to extend the results obtained in \cite{bonforte2008fast},
by M.Bonforte, G. Grillo and L. Vazquez, where they consider Cartan-Hadamard manifolds with Ricci curvature (and therefore sectional curvture) bounded from below, under relaxed  geometric assumption. In doing so it quickly became clear that one of main tools was indeed the existence of cut-off functions with an explicit decay rate for the $|\nabla\phi_R|$ and  $|\Delta \phi_R| $.

We were thus led to investigate the existence of such cut-offs under more general geometric conditions than those considered in \cite{bonforte2008fast}, in particular, avoiding hypotheses on the injectivity radius.
The above mentioned \cite[Theorem 2.2]{guneysu2016sequences} gives a positive answer in the case of nonnegative curvature, and in \cite{schoen1994lectures1} it is shown that a good exhaustion function exists if the Ricci curvature is bounded below by a negative constant. This  suggests that this may be extended to the case manifolds with  suitable, not necessarily constant, Ricci curvature lower bounds.

A substantial part of this paper is  devoted to carry out this program and to produce both exhaustion functions and sequences of cut-offs on  manifolds whose Ricci curvature
satisfies the lower bound
$
\textnormal{Ric}\geq - (d-1) G_\alpha(r)
$
 in the sense of quadratic forms, for a family of  possibly unbounded functions  $G_\alpha$  of the distance function $r=r(x)$ from a fixed reference point $o$, and with an explicit dependence on $\alpha$ for the  bounds on the gradient and of the Laplacian.

We believe that these cut-off functions will be useful in a number of situation and the  second part of the paper is devoted to illustrating several instances, mostly coming from fast and porous media diffusions, where this is indeed the case.

The paper is organized as follows. In Section \ref{section:def} we set up notation and give the relevant definitions.

Section \ref{section:existence} is devoted to the the main technical results of the paper, the existence of $C^\infty(M)$ exhaustion functions, Theorem \ref{thm:3.1}, and of  sequences of Laplacian cut-off under generalized Ricci lower bounds, Corollary \ref{cor:3.1.2} and \ref{cor:3.1}. Their proofs  depend on several other additional results, many of independent interest, which we collect in subsection \ref{subsection:technical}. We mention in particular Theorem \ref{thm:3.2}, which generalizes  the Li-Yau gradient estimate (see \cite[Theorem 7.1]{cheeger2001degeneration}) to  functions satisfying a Poisson equation with right hand side depending both on the function itself and on $r(x)$ and under quite general Ricci curvature lower bound, and Proposition \ref{prop:3.4} which  provides  a lower bound for the volume of balls with fixed radius in terms  the distance of  their center from reference fixed point $o$, as in \cite[Proposition 4.3]{schoen1994lectures1} for manifolds satisfying suitable Ricci variable curvature lower bounds.

The last two sections are devoted to applications.

In Section \ref{section:application1} we present a first direct application of the existence of sequences of Laplacian cut-offs to obtain a generalization of the $L^q$-properties of the gradient and the self-adjointness of Schr\"odinge-type operators discussed in \cite{shubin2001essential} and  \cite{guneysu2016sequences} to
to the class of Riemannian manifolds satisfying our more general Ricci curvature conditions.

Section \ref{section:application2} is arguably the second main part of the paper. We apply the results of Section~\ref{section:existence} to study  uniqueness $\Lnormal^1$-contractivity properties and conservation of mass for the porous diffusion equation as well as uniqueness, weak conservation of mass and extinction time properties for solutions of the fast diffusion equation, which we prove under our usual quote general geometric assumptions.

\section{Basic definitions and assumptions}\label{section:def}

Throughout the paper, $(M, \langle\,, \rangle)$ is a complete noncompact $d$-dimensional Riemannian manifold, and we will often simply refer to it as $M$. We denote by $r(x):= \textnormal{dist}_M(x,o)$ the the Riemannian distance function from a fixed reference point $o\in M$.
The gradient and (negative) Laplacian of a function $u$ on  $M$ are denoted by $\nabla u$ and $\Delta $, respectively. Recall that, in local coordianates  $x^i$, they are given by
\[
\nabla u = g^{ij}\frac{\partial u}{\partial x_i}\frac{\partial}{\partial x^j}\quad
\Delta u= \frac{\partial}{\partial x^i} \left( g^{ij} \sqrt{ g} \frac{\partial u}{\partial x^j}\right)
\]
where $\{g_{ij}\}$
 is the matrix of the coefficients of the metric in the coordinates $\{x^i\}$, $\{g^{ij}\}$ its inverse and $g=\textnormal{det}\{g_{ij}\}$.

We let  $B_R(p)$  be the geodesic ball of radius $R$  centered at $p\in  M$, and with
$\partial B_R(p)$ and  $vol (B_R(p))$ its boundary and Riemannian volume. When $p=o$ we may omit the  center.

We will assume that the Ricci curvature of $M$ satisfies the inequality
\[
Ric_M (\cdot, \cdot) \geq  - (d-1) G(r),
\]
in the sense of quadratic forms where  $G(r) \in C^0([0,\infty))$.

We denote with  $M_G$ the $d$-dimensional model manifold with radial Ricci curvature equals to $-(d-1)G(r)$, namely, the manifold which is diffeomorphic to $\R^d$ and whose metric in spherical coordinates is given by
$$
\langle \cdot, \cdot \rangle_G = dr^2 + h(r)^2 d\xi^2,
$$
where $h(r)$is the solution of the problem
\begin{equation}\label{eq:3.1}
\begin{cases}
h''(r) =G(r) h(r),\\
h(0)=0,\\
h'(0)=1.
\end{cases}
\end{equation}

Let $V_G (r)$ be the volume of the ball of radius $r$ centered at the pole $o$ of $M_G$ so that
\begin{equation}\label{eq:V_G}
V_G (r)= C(d) \int_0 ^r h(t)^{d-1} dt,
\end{equation}
so that, by Laplacian comparison,
\[
\Delta r\leq (d-1)
\frac{h'(r)}{h(r)}
\]
pointwise in the complement of the cut locus of $o$ and weakly on $M$,
 and  by the Bishop-Gromov comparison theorem, for every $0\leq R_1 \leq R_2$.
\begin{equation}\label{eq:3.2}
\frac{vol (B_{R_2})(o)}{V_G(R_2)} \leq \frac{vol (B_{R_1})(o)}{V_G(R_1)},
\end{equation}

Finally, as in \cite{guneysu2016sequences}, we give the  following definition
\begin{definition}[Laplacian cut-off]\label{def:laplacian-cutoff}
$M$ admits a sequence of Laplacian cut-off functions, $\{\phi_n\}_{n\in \N} \subset C^\infty _c(M)$, if $\{\phi_n\}_{n\in \N}$ satisfies the following properties:
\begin{enumerate}
	\item $0\leq \phi_n (x) \leq 1$ for all $n \in \N$, $x \in M$;
	\item for all compact $K \subset M$ there exists $n_0(K) \in \N$ such that for every $n\geq n_0(K)$ it holds $\phi_{n|K}\equiv 1$;
	\item $\sup_{x\in M} |\nabla \phi_n (x)| \to 0$ as $n \to \infty$;
	\item  $\sup_{x\in M} |\Delta \phi_n (x)| \to 0$ as $n \to \infty$.
\end{enumerate}
\end{definition}

To indicate constants we will preferably use capital letters $A, C, D, E$, possibly with subscripts, which may change from line to line, and,  whenever necessary the dependence of the constants on the relevant parameters will be made explicit.

\section{On the existence of a sequence of Laplacian cut-off.}\label{section:existence}
In this section we collect the technical results which will allows us to prove the existence of Laplacian cut-off functions under relaxed curvature bounds. As already mentioned, we will use these cut-offs in Sections \ref{section:application1} and \ref{section:application2} below in order to extend and further generalize several different results in functional analysis and a PDE's. The main result is  Theorem \ref{thm:3.1}, where, following
the proof of \cite[Theorem 4.2]{schoen1994lectures1}, we construct $C^\infty$ exhaustion function $\rf$ whose gradient and Laplacian are  controlled in terms of explicit functions of the distance function $r$.

The key ingredients for the proof are  Theorem \ref{thm:3.2}, a generalization of Li-Yau gradient estimates which permits to obtain a control on the gradient of solutions of a Poisson equation again in terms of the distance function $r$ and the function $G$ which bounds the curvature from below, and Proposition \ref{prop:3.4} which gives a lower bound on the volume  balls with fixed radius in terms of the distance of their center from the reference point $o$.  In Corollary \ref{cor:3.1.2} we use the exhaustion function of Theorem \ref{thm:3.1} to construct a sequence $\{\phi_n\}_{n\in \N}$ of Laplacian cut-offs with support contained in a suitable increasing exhaustion of $M$. Finally, in \ref{cor:3.1}  we specialize the construction  to obtain cut-offs supported in geodesic balls and show that, when $\alpha=2$, which corresponds to an almost Euclidean situation, it is possible to construct cut-offs for which, as in Euclidean space, are equal to $1$ on a ball of radius $R>0$ and supported in a ball of radius $\gamma R$ with $\gamma>1$ arbtrarily close to $1$. This is obtained using a specific construction modelled on the proof of \cite[Theorem 6.33]{cheeger1996lower}, which basically hinges on the fact that when $\alpha=2 $ the Laplacian of the distance function satisfies $\Delta r \leq Cr^{-1}$ weakly on the whole manifold.

\begin{theorem}\label{thm:3.1}
Let $Ric_M ( \cdot , \cdot) \geq  - (d-1) \frac{\kappa^2}{(1+r^2)^{\alpha/2}} \langle \cdot , \cdot \rangle$ in the sense of quadratic forms, with $\alpha \in [-2,2]$ and $o\in M$ fixed. Then there exists an exhaustion function $\rf: M \to [0,\infty)$, $\rf \in C^\infty(M)$, and positive constants $D_{i,\alpha}$ such that
\begin{itemize}
	\item \textbf{Case $\alpha \in [-2,2)$:}
\begin{enumerate}[(1)]
	\item $D_{1,\alpha} r^{1-\alpha/2}(x) \leq \rf(x) \leq D_{2,\alpha}\max\{1; r^{1-\alpha/2}(x)\}$, for every $x \in M$,
	\item $|\nabla \rf|\leq \frac{D_{3,\alpha}}{r^{\alpha/2}}$, for every $x \in M\setminus \overline{B}_1(o)$,
	\item $|\Delta \rf|\leq \frac{D_{4,\alpha}}{r^{\alpha}}$, for every $x \in M\setminus \overline{B}_1(o)$.
\end{enumerate}	
\item \textbf{Case $\alpha=2$:}
\begin{enumerate}
	\item[(1')] $D_{1,2} \max\{1+\log(r(x)); 0\} \leq \rf(x) \leq D_{2,2}\max\{1+\log(r(x));1\}$, for every $x \in M$,
	\item[(2')] $|\nabla \rf|\leq \frac{D_{3,2}}{r}$, for every $x \in M\setminus \overline{B}_1(o)$,
	\item[(3')] $|\Delta \rf|\leq \frac{D_{4,2}}{r^{2}}$, for every $x \in M\setminus \overline{B}_1(o)$.
\end{enumerate}	
\end{itemize}
\end{theorem}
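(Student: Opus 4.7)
The plan is to follow the strategy of \cite[Theorem 4.2]{schoen1994lectures1}, replacing the constant--curvature gradient estimate of Cheng--Yau used there by the generalized Li--Yau estimate of Theorem~\ref{thm:3.2}, and the constant--Ricci volume comparison by the variable--Ricci volume lower bound of Proposition~\ref{prop:3.4}. First I would identify the target profile: for $\alpha\in[-2,2)$ set $\psi(r)=r^{1-\alpha/2}$, and for $\alpha=2$ set $\psi(r)=\log r$. With $h$ the solution of \eqref{eq:3.1} for $G(r)=\kappa^2/(1+r^2)^{\alpha/2}$, a direct ODE analysis gives $h'(r)/h(r)\lesssim(1+r^2)^{-\alpha/4}$, so the Laplacian comparison yields $\Delta r\lesssim r^{-\alpha/2}$ (respectively $r^{-1}$ when $\alpha=2$) weakly on $M$. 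A formal computation $\Delta\psi(r)=\psi''(r)+\psi'(r)\Delta r$ then produces an expression of size $r^{-\alpha}$ (respectively $r^{-2}$), matching the target bound~(3)/(3').

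Because $r$ is only Lipschitz, I would promote this formal profile to a smooth function via Poisson problems on an exhaustion. Fix a smooth radial $F$ on $M$ with $F\equiv 1$ on $B_1(o)$ and $F(x)$ comparable to $r(x)^{-\alpha}$ (resp.\ $r(x)^{-2}$) outside $B_1(o)$, and for each $n$ solve
\[
\Delta u_n = F \ \text{on } B_n(o), \qquad u_n = \psi(n) \ \text{on } \partial B_n(o).
\]
Apply Theorem~\ref{thm:3.2} — whose RHS is allowed to depend on $r(x)$ and on the solution itself — to obtain a uniform gradient bound $|\nabla u_n(x)|\le D_{3,\alpha}\,r(x)^{-\alpha/2}$ outside $B_1(o)$, independent of $n$. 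Interior Schauder estimates (recall $F$ is smooth and locally bounded) plus a diagonal argument then extract a subsequence converging in $C^{2}_{\mathrm{loc}}(M)$ to a smooth $\rf$ with $\Delta\rf=F$, which inherits properties (2)--(3) (resp.~(2')--(3')).

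The bilateral growth bound~(1)/(1') is handled as follows. The upper bound follows by integrating the gradient estimate along a minimizing geodesic from $o$, giving $\rf(x)\lesssim r(x)^{1-\alpha/2}$ (resp.\ $\log r(x)$) outside $B_1(o)$. The lower bound is the crux: having chosen $F$ strictly positive outside $B_1(o)$, I would test the equation $\Delta u_n=F$ against the indicator of a geodesic annulus $B_{r}\setminus B_{r/2}$ and use Proposition~\ref{prop:3.4} to guarantee a quantitative lower bound on the volume of balls of fixed radius centered at points at distance $r$ from $o$. This forces $u_n(x)$ to grow at least at the rate $r(x)^{1-\alpha/2}$ (resp.\ $\log r(x)$), and the bound transfers to $\rf$ under the $C^{2}_{\mathrm{loc}}$ limit.

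The main obstacle will be precisely this lower bound: one has to convert pointwise positivity of $F$ together with the variable volume growth from Proposition~\ref{prop:3.4} into a sharp, uniform growth rate for $u_n$ from below, while carefully tracking the dependence on $\alpha$ through the comparison with the model manifold $M_G$. The case $\alpha=2$ is further delicate because the target profile is only logarithmic, so the slack between the gradient and Laplacian bounds is tight; this is the case where the separate construction modelled on \cite[Theorem 6.33]{cheeger1996lower}, which exploits the improved weak estimate $\Delta r\lesssim r^{-1}$, becomes essential to reach the sharp constants in~(2')--(3').
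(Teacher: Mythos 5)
Your overall strategy --- solve Dirichlet problems on an exhaustion, apply Theorem~\ref{thm:3.2}, invoke Proposition~\ref{prop:3.4}, and pass to a limit --- is the right toolbox, but the specific Dirichlet problem you pose leads to a concrete gap. You solve the \emph{linear} Poisson equation $\Delta u_n = F$ (with $F\sim r^{-\alpha}$ a fixed function) and attempt to read off $|\nabla u_n(x)|\le D_{3,\alpha}r^{-\alpha/2}$ directly from Theorem~\ref{thm:3.2}. But Theorem~\ref{thm:3.2} (and Corollary~\ref{cor:3.2}(ii)) bounds the \emph{logarithmic} gradient $|\nabla u_n|/u_n$, not $|\nabla u_n|$. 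For $\alpha\in[0,2)$ you deliberately arrange $u_n$ to grow like $r^{1-\alpha/2}$, so the Li--Yau bound only yields $|\nabla u_n|\lesssim u_n\,r^{-\alpha/2}\sim r^{1-\alpha}$, which is strictly weaker than the target $r^{-\alpha/2}$. So the uniform gradient bound --- the linchpin of your compactness argument and of property~(2) --- does not follow as stated. Your lower bound argument (``test against the indicator of an annulus'') is also too vague to control a pointwise lower growth rate.

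The paper sidesteps this precisely by choosing a \emph{multiplicative} auxiliary equation. It solves, on annuli $B_R\setminus\overline B_{1/2}$,
\[
\Delta\omega_R=\frac{A_1^2C^2}{r^\alpha}\,\omega_R,\qquad \omega_R=1\text{ on }\partial B_{1/2},\quad \omega_R=0\text{ on }\partial B_R,
\]
and uses the maximum principle (the zero-order coefficient is nonnegative) to get a monotone, uniformly bounded family $\omega_R\nearrow\omega\in(0,1)$. An energy estimate with weight $e^{Cr^{1-\alpha/2}}$, combined with the Li--Yau Harnack-type consequence of Theorem~\ref{thm:3.2} and the volume lower bound of Proposition~\ref{prop:3.4}, yields the pointwise decay $\omega(x)\le C_4 e^{-C_5 r^{1-\alpha/2}}$. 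Setting $\rf=-\log\omega$ (smoothly extended near $o$), the \emph{logarithmic} gradient bound from Theorem~\ref{thm:3.2} becomes exactly $|\nabla\rf|=|\nabla\omega|/\omega\lesssim r^{-\alpha/2}$, with no loss; the lower bound~(1) follows directly from the decay of $\omega$ by taking $-\log$; the upper bound~(1) by integrating $|\nabla\rf|$ along a geodesic; and $|\Delta\rf|\le|\nabla\omega/\omega|^2+A_1^2C^2 r^{-\alpha}$ from the equation. Your instinct that $\alpha=2$ is delicate and needs the $\Delta r\lesssim r^{-1}$ weak estimate is right, but in the paper that refinement enters in Corollary~\ref{cor:3.1} for optimizing $\Gamma$, while the exhaustion function itself is still built by the $\log$-of-eigenfunction mechanism (with $e^{Cr^{1-\alpha/2}}$ replaced by a polynomial weight). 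To repair your proposal you would need to replace the linear RHS $F$ with a zero-order term proportional to the unknown, and to pass to the logarithm --- at which point you have effectively reproduced the paper's construction.
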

\begin{proof}
Let us observe that $r(x)$ is not necessarily smooth everywhere but it is Lipschitz on all of $M$ with uniform unitary Lipschitz constant and then it is possible to uniformly approximate $r(x)$ by a smooth function $r_\epsilon(x)$ such that $|r_\epsilon (x) - r(x)|\leq \epsilon$ and $|\nabla r_\epsilon(x)| \leq 1+\epsilon$ for every $x \in M$ and $\epsilon>0$ fixed, see \cite[Section 2]{greene1979c}, which is enough for our purpose since every ball with respect to the Riemannian distance $r(x)$ contains and is contained by a ball with respect to the approximating function $r_\epsilon(x)$. Thus, without loss of generality, hereafter we will consider $r(x)$ to be $C^\infty$ on $M\setminus\{p\}$.

We first prove the Theorem for $\alpha \in [0,2)$.
Let $\omega_R : B_{R}(o)\setminus \overline{B}_{1/2} (o) \to [0,1]$ be such that
		$$
		\begin{cases}
		\Delta \omega_R (x)= \frac{A_1^2 C^2}{r^\alpha(x)} \omega_R (x), \\
		\omega_{R| \partial B_{1/2}} \equiv 1, \\
		\omega_{R| \partial B_{ R}} \equiv 0,
		\end{cases}
		$$
		where $A_1 =(1-\alpha/2)/\sqrt{2}$ and $C>0$ is a constant that is chosen like in Remark \ref{rem2}. By the maximum principle, $\{\omega_{R_n} \}$ is an increasing and bounded family of functions for every $x \in M\setminus \overline{B}_{1/2}(o)$ as $R_n \to \infty$, and therefore there exists the point-wise limit function
\[
\omega(x) := \lim_{R \to \infty}\omega_R (x).
\]
By $L^p$ and Schauder estimates, there exists a subsequence which converges in $C^\infty(\overline{B}_{R_n}\setminus B_{1/2})$ for every $n$, so that  $\omega \in C^\infty(M\setminus \overline{B}_{1/2})$ and
\[
		 	\begin{cases}
		 	\Delta \omega (x)= \frac{A_1^2 C^2}{r^\alpha(x)} \omega (x), \\
		 	\omega_{| \partial B_{1/2}} \equiv 1, \\
		 	0<\omega <1 & \textnormal{on } M\setminus \overline{B}_{1/2}(o).
		 	\end{cases}
\]
Integrating by parts and using $\omega_{R| \partial B_{R}} =0$, we get
\begin{align}\label{eq:3.21}
		\int_{B_{R} \setminus B_{1/2}} e^{C r^{1-\alpha/2}(x)}\omega_R \Delta \omega_R &=\int_{\partial B_{1/2}} \omega_R \frac{\partial \omega_R}{\partial \eta} - \int_{B_{R} \setminus B_{1/2}} \langle \nabla ( e^{C r(x)^{1-\alpha/2}}\omega_R), \nabla \omega_R \rangle \\
		&=\int_{\partial B_{1/2}}  \frac{\partial \omega_R}{\partial \eta} - \int_{B_{R} \setminus B_{1/2}} \left(\frac{(1-\alpha/2)C}{r^{\alpha/2}(x)}\right)  e^{C r^{1-\alpha/2}(x)} \omega_R \langle \nabla r(x) , \nabla \omega_R \rangle \nonumber\\
		&\;\;\;\;  - \int_{B_{R} \setminus B_{1/2}}  e^{C r^{1-\alpha/2}(x)} |\nabla \omega_R|^2\nonumber\\
		&\leq A_2   - \int_{B_{R} \setminus B_{1/2}} \left(\frac{(1-\alpha/2)C}{r^{\alpha/2}(x)}\right)  e^{C r^{1-\alpha/2}(x)} \omega_R \langle \nabla r(x) , \nabla \omega_R \rangle \nonumber\\
		& \;\;\;\; - \int_{B_{R} \setminus B_{1/2}}  e^{C r^{1-\alpha/2}(x)} |\nabla \omega_R|^2, \nonumber
\end{align}
		where the constant $A_2$ is independent of $R$ by elliptic estimates, since $\omega_R$ is uniformly bounded for every $R$ and $\omega_{R|\partial B_{1/2}} \equiv 1$.
		Then,		
		\begin{align*}
		\int_{B_{R} \setminus B_{1/2}} \frac{A_1^2C^2}{r^\alpha(x)} e^{C r^{1-\alpha/2}(x)g(r(x))}\omega_R^2 &\leq A_2   - \int_{B_{R} \setminus B_{1/2}} \left(\frac{(1-\alpha/2)C}{r^{\alpha/2}(x)}\right)  e^{C r^{1-\alpha/2}(x)} \omega_R \langle \nabla r(x) , \nabla \omega_R \rangle \\
		&\;\;\;\; - \int_{B_{R} \setminus B_{1/2}}  e^{C r^{1-\alpha/2}(x)} |\nabla \omega_R|^2  \\
		&\leq A_2 +  \int_{B_{R} \setminus B_{1/2}}  \left(\frac{(1-\alpha/2)C}{r^{\alpha/2}(x)}\right)  e^{C r^{1-\alpha/2}(x)} \omega_R | \nabla \omega_R |\\
		& \;\;\;\; - \int_{B_{R} \setminus B_{1/2}}  e^{C r^{1-\alpha/2}(x)g(r(x))} |\nabla \omega_R|^2\\
		& \leq A_2 + \int_{B_{R} \setminus B_{1/2}} \left( \frac{(1-\alpha/2)^2C^2}{4r^{\alpha}(x)}\omega_R ^2 + |\nabla \omega_R|^2  \right)e^{C r^{1-\alpha/2}(x)} \\
		&\;\;\;\; - \int_{B_{R} \setminus B_{1/2}}  e^{C r^{1-\alpha/2}(x)} |\nabla \omega_R|^2 \\
		&= A_2 +  \int_{B_{R} \setminus B_{1/2}}\frac{(1-\alpha/2)^2C^2}{4r^{\alpha}(x)}  e^{C r^{1-\alpha/2}(x)} \omega_R ^2.
		\end{align*}
		It follows that
		$$
		\int_{B_{ R} \setminus B_{1/2}} \frac{A_1^2C^2}{2r^{\alpha}(x)} e^{C r^{1-\alpha/2}(x)} \omega_R ^2 \leq A_2,
		$$
		and then, by letting $R \to \infty$,
		$$
		\int_{M \setminus B_{1/2}} \frac{A_1^2C^2}{2r^{\alpha}(x)} e^{C r^{1-\alpha/2}(x)} \omega ^2 \leq A_2.
		$$
		Let $x \in M \setminus \overline{B}_{1}$ and $y \in B_{1/4} (x) \subset \left( M\setminus \overline{B}_{3/4}\right)\subset \left( M\setminus \overline{B}_{1/2}\right)$. By the triangle inequality,
		$$
		 r(x) - 1/4 \leq  r(y) \leq  r(x) + 1/4
		$$
		and then
		\begin{align*}
		& \frac{A_1^2C^2e^{C (r(x)-1/4)^{1-\alpha/2}} }{2(r(x)+1/4)^{\alpha}}\int_{B_{1/4}(x)} \omega^2 (y) \leq \int_{B_{1/4}(x)} \frac{A_1^2C^2}{2r^{\alpha}(y)} e^{C r^{1-\alpha/2}(y)} \omega ^2 (y) \\
		&\leq \int_{M \setminus B_{1/2}(p)} \frac{A_1^2C^2}{2r^{\alpha}(x)}  e^{C r^{1-\alpha/2}(x)} \omega ^2  \leq A_2 ,
		\end{align*}
		namely
		\begin{align*}
		\int_{B_{1/2}(x)} \omega^2 (y) &\leq  \frac{2A_2(r(x)+1/4)^{\alpha}}{A_1^2C^2}  e^{-C (r(x)-1/4)^{1-\alpha/2}} \\
		&\leq \frac{2^{\alpha+1}A_2r^{\alpha}(x)}{A_1^2C^2}  e^{-2^{-(1-\alpha/2)}C r^{1-\alpha/2}(x)}.
		\end{align*}
		By Theorem \ref{thm:3.2} and Corollary \ref{cor:3.2} applied with
		\begin{align*}
		&\zeta=r, \;\; f_1(r) = \frac{A_1^2 C^2}{r^\alpha}, \;\; f_2(\omega)=\omega,\\
		&G(r) = \frac{\kappa^2}{(1+ r^2)^{\alpha/2}},
		&R_0=1/2, R_1=3/4, t=1/8, \gamma=\infty,
		\end{align*}
		we deduce that
		$$
		|\nabla \log \omega (y) | \leq C_3 (d, \kappa,\alpha)
		$$
		on $M \setminus \overline{B}_{3/4} \supset B_{1/4} (x)$, and then, letting $\sigma$, be a geodesic parametrized by arc length connecting $x$ to $y$, from the path integral
		$$
		|\log \omega(x) - \log \omega(y)| = \int_{0}^{\textnormal{dist}(x,y)} |\nabla \log(\sigma (s)) | ds \leq \frac{C_3}{4},
		$$
		we infer that $\omega(y) \geq e^{-C_3/4} \omega(x)$, and that implies
		$$
		e^{-C_3/2} Vol (B_{1/4} (x)) \omega^2(x) \leq  \frac{2^{\alpha+1}A_2r^{\alpha}(x)}{A_1^2C^2}  e^{-2^{-(1-\alpha/2)}C r^{1-\alpha/2}(x)},
		$$
		namely
		\begin{equation*}\label{eq:pr1}
		\omega^2 (x) \leq  \frac{2^{\alpha+1}A_2r^{\alpha}(x)e^{C_3/2}}{A_1^2C^2}   \left( vol (B_{1/4} (x)) \right)^{-1} e^{-2^{-(1-\alpha/2)}C r^{1-\alpha/2}(x)}.
		\end{equation*}
		By Proposition \ref{prop:3.4}, and by the fact that $r^{\alpha/2} \leq A_3e^{A_3r^{1-\alpha/2}}$, we conclude that
		\begin{equation}\label{eq:1.1dis}
		\omega (x) \leq C_4  e^{-C_5 r^{1-\alpha/2}(x)}, \qquad \textnormal{on } M \setminus \overline{B}_{1}(o),
		\end{equation}
		with
		\begin{equation}\label{C_5C_4}
		C_4 = \sqrt{\frac{2^{\alpha+1}A_2A_3e^{C_3/2}}{A_1^2C^2 \bar{C}_1}}, \qquad  C_5 = \frac{2^{-(1-\alpha/2)}C - \bar{C}_2}{2} -A_3,
		\end{equation}
		and where $\bar{C}_1$ and $\bar{C}_2$ are the constant that appears in the statement of Proposition \ref{prop:3.4}.
		
		Extend now $\omega(x)$ on all of $M$ fixing $\omega(x)\equiv 1$ for every $x \in B_{1/2}(p)$ and define
		$$
		\rf(x) := (\eta (x) -1)\log(\omega(x)) + \eta(x),
		$$
		with $\eta \in C^\infty$, $\eta(x) \equiv 1$ on $B_{1/2}(p)$ and $\eta(x) =0$ on $M\setminus \overline{B}_1 (o)$. Observe that $0<E_1\leq h(x)\leq E_2$ on $\overline{B}_1(p)$ and in particular $\rf(x) \geq E_1r^{1-\alpha/2}$ on $\overline{B}_1(o)$.
		
		Fix $x \in M \setminus \overline{B}_{1}(o)$ and let $\sigma_o :[0, r(x)] \to M$ be a geodesic parametrized by arc length joining $o$ and $x$. Then
		\begin{align}
		|\rf(x) - \rf(p)|= \int_0 ^{r(x)} |\nabla \rf(\sigma_o(s))| ds &= \int_{1/2} ^{r(x)} |\nabla \rf(\sigma_o(s))| ds \nonumber\\
		&= \int_{1/2}^1 |\nabla \rf(\sigma_o(s))| ds + \int_1 ^{r(x)}|\nabla \rf(\sigma_o(s))| ds \nonumber\\
		&\leq C_6 + \int_{1}^{r(x)}\frac{C_7}{s^{\alpha/2}} ds \nonumber\\
		&\leq C_8 r^{1- \alpha/2}(x),\label{eq:3.22}
		\end{align}
		where we used again Theorem \ref{thm:3.2} and Corollary \ref{cor:3.2} applied with
		\begin{align}
		&\zeta=r, \;\;f_1(r) = \frac{A_1^2 C^2}{r^\alpha}, \;\; f_2(\omega)=\omega, \label{eq:ap1Thm3.2}\\
		&G(r) = \frac{\kappa^2}{(1+ r^2)^{\alpha/2}},
		&R_0=1/2, R_1= s, t=t(s)\equiv\frac{1}{4}, \gamma=\infty, \label{eq:ap2Thm3.2}
		\end{align}
		with $t$ chosen in such a way that $(1-t)R_1=(1-t)s >1/2=R_0$, uniformly for every $s>1$. Observe that $C_6$ can be chosen independent of $x$. Henceforth,
		\begin{equation}\label{eq:pr2.1}
		\rf(x) \leq \max\{E_2; (1+ C_8)r^{1-\alpha/2}\} \quad \textnormal{on } M.
		\end{equation}
		On the other hand, since $C_5 - \log(C_4) >0$ by Remark \ref{rem2},  from inequality \eqref{eq:1.1dis} we have that
		 \begin{align}
		 \rf(x) = -\log(\omega (x)) \geq C_5r^{1-\alpha/2} - \log(C_4) &\geq (C_5 - \log(C_4))r^{1-\alpha/2} \quad \textnormal{on } M\setminus \overline{B}_1(o)\nonumber\\
		 &\geq \min\{E_1; C_5 - \log(C_4)\} r^{1-\alpha/2} \quad \textnormal{on } M,\label{eq:pr2.2}
		 \end{align}
		and putting together the above inequality \eqref{eq:pr2.2} with \eqref{eq:pr2.1} we conclude that
		$$
		D_{1,\alpha} r^{1-\alpha/2}(x) \leq \rf(x) \leq D_{2,\alpha}\max\{1; r^{1-\alpha/2}(x)\} \quad \textnormal{for every } x \in M.
		$$		
		Finally, for every $x \in M\setminus \overline{B}_1(o)$, we have
		\begin{itemize}
			\item [(i)] $|\nabla \rf| = \left| \frac{\nabla \omega}{\omega} \right|$,
			\item [(ii)] $|\Delta \rf| \leq \left| \frac{\nabla \omega}{\omega}\right|^2 + \frac{\Delta \omega}{\omega} =  \left| \frac{\nabla \omega}{\omega}\right|^2 + \frac{A_1^2 C^2}{r^\alpha (x)}$,
		\end{itemize}
		and the last statements of the thesis follow one more time by an application of Theorem \ref{thm:3.2} and Corollary \ref{cor:3.2} with \eqref{eq:ap1Thm3.2} and \eqref{eq:ap2Thm3.2}.
		
To conclude, the cases $\alpha\in [-2,0)$ and $\alpha =2$  can be proven with suitable modifications of the previous proof. Indeed, observe that for $\alpha \in [0,2)$ we used crucially the lower bound estimate for the volume of ball of fixed radius, $vol\left(B_{1/4}(x)\right)$, that appears in Proposition \ref{prop:3.4}. Therefore, replacing the exponential function $e^{Cr^{1-\alpha/2}(x)}$ in the integral \eqref{eq:3.21} with $r^{C[1+ (d-1)(1+\sqrt{1+4\kappa^2})]}(x)$, where $C>0$ is chosen big enough, will do the trick for the case $\alpha =2$, for example. For the case $\alpha \in [-2,0)$ we need one more remark: the constant $C_3$ has to be replaced by $\tilde{C}_3(r)= C_3(d,\kappa,\alpha)r^{-\alpha/2}(x)$. The estimates that  follow still hold with suitable changes. In \ref{C_5C_4} we  have
$$
\tilde{C}_4(r) = C_4 e^{\frac{r^{-\alpha/2}}{4}}\leq C_4e^{r^{1-\alpha/2}},
$$
and then choosing $C$ big enough such that $C_5 -1 >0$ we  still recover an upper bound for $\omega(x)$ of the form of \eqref{eq:1.1dis}. For the lower bound \eqref{eq:pr2.1} instead, the estimate comes directly from \eqref{eq:3.22} where now $\alpha \in [-2,0)$.             		 	
\end{proof}

\begin{remark}[On the choice of the constant $C$ in the proof of Theorem \ref{thm:3.1}.]\label{rem2}
If $C_4$ and $C_5$ are defined as in \eqref{C_5C_4}, we choose $C$ big enough such that $C_5 >0$ and $C_5 - \log(C_4)>0$. We want to stress that all  constants that appear in the definition of $C_4$ and $C_5$ are independent of the radius $R$ and consequently this independence carries over to $C$ as well.
\end{remark}
Using the exhaustion function of Theorem \ref{thm:3.1}, it is easy to construct sequences of cut-off function with explicitly controlled gradient and Laplacian. In the almost Euclidean case where $\alpha=2$, we actually use a construction inspired by \cite{cheeger2001degeneration} which relies on the fact that the Laplacian of the distance function satisfies the weak inequality $\Delta r\leq C r^{-1}$ globally on $M$, and allows to construct cut-offs which are $1$ on the  ball of radius  $R$ and vanish off in a ball of radius $\gamma R$ with $\gamma$ arbitrarily close to $1$.
\begin{corollary}\label{cor:3.1}
Let $Ric_M ( \cdot , \cdot)$ be as in Theorem \ref{thm:3.1}. Then, for every $R\geq 1$ when $\alpha \in [-2,2)$, $R>0$ when $\alpha=2$, and
$$
\gamma > \Gamma(\alpha, \kappa, d) \geq \begin{cases} \frac{D_{2,\alpha}}{D_{1,\alpha}} \geq 1 & \textnormal{for } \alpha \in [-2,2),\\
1 &  \textnormal{for } \alpha=2,
\end{cases}
$$
there exist $\phi : M \to [0,1]$, $\phi \in C^\infty _c(M)$, such that
\begin{enumerate}[(i)]
\item $\phi_{|B_R (p)} \equiv 1$,
\item supp$(\phi) \subset B_{\gamma R} (o)$,
\item $| \nabla \phi | \leq \frac{C_1}{R}$,
\item $| \Delta \phi | \leq \frac{C_2}{R^{1+\alpha/2}}$,
\end{enumerate}
with $C_1, C_2$ independent of $R$. If we choose $R=n$ then we have a sequence of Laplacian cut-offs $\{\phi_n\}_{n\in \mathbb{N}}$   with respect to the metric balls in the sense of Definition \ref{def:laplacian-cutoff}.  

\end{corollary}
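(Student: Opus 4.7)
The idea is to compose a one-dimensional bump function $\psi$ with a suitable exhaustion-type function whose gradient and Laplacian are already controlled, choosing the exhaustion differently in the two regimes. For $\alpha\in[-2,2)$ I would use the exhaustion $\mathfrak{r}$ supplied by Theorem~\ref{thm:3.1}; for $\alpha=2$ the merely logarithmic growth of $\mathfrak{r}$ is too slow to produce a cut-off supported in a ball of radius $\gamma R$ with $\gamma$ arbitrarily close to $1$, so I would work directly with the distance function and exploit the weak global inequality $\Delta r\le Cr^{-1}$ singled out just before the statement.

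\textbf{Case $\alpha\in[-2,2)$.} Fix a smooth, non-increasing $\psi:\mathbb{R}\to[0,1]$ with $\psi\equiv 1$ on $(-\infty,D_{2,\alpha}/D_{1,\alpha}]$ and $\psi\equiv 0$ on $[\gamma^{1-\alpha/2},\infty)$; such a $\psi$ exists precisely when $\gamma^{1-\alpha/2}>D_{2,\alpha}/D_{1,\alpha}$, and solving this inequality for $\gamma$ determines the admissible threshold $\Gamma(\alpha,\kappa,d)$, which in either subcase $\alpha\le 0$ or $\alpha>0$ is at least $D_{2,\alpha}/D_{1,\alpha}$. Set $s_R:=D_{1,\alpha}R^{1-\alpha/2}$ and define $\phi(x):=\psi(\mathfrak{r}(x)/s_R)$. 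The sandwich estimate (1)--(1') of Theorem~\ref{thm:3.1} immediately gives $\phi\equiv 1$ on $B_R(o)$ and $\textnormal{supp}\,\phi\subset B_{\gamma R}(o)$. Moreover, on the set where $\psi'(\mathfrak{r}/s_R)\neq 0$ we have $\mathfrak{r}(x)>D_{2,\alpha}R^{1-\alpha/2}$, and the upper bound $\mathfrak{r}\le D_{2,\alpha}\max\{1,r^{1-\alpha/2}\}$ then forces $r(x)>R\ge 1$, placing us in the region where Theorem~\ref{thm:3.1} provides $|\nabla\mathfrak{r}|\le D_{3,\alpha}r^{-\alpha/2}$ and $|\Delta\mathfrak{r}|\le D_{4,\alpha}r^{-\alpha}$. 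The chain rule, combined with $R<r\le\gamma R$ on $\textnormal{supp}(\nabla\phi)$, will then yield $|\nabla\phi|\le C_1/R$ and $|\Delta\phi|\le C_2/R^{1+\alpha/2}$ directly.

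\textbf{Case $\alpha=2$.} Following \cite[Theorem~6.33]{cheeger1996lower}, I would take $\psi$ smooth, non-increasing, identically $1$ on $(-\infty,1]$ and vanishing on $[\gamma,\infty)$ for any $\gamma>1$, and set $\phi(x):=\psi(r(x)/R)$, interpreted through the smooth approximation $r_\epsilon$ introduced in the proof of Theorem~\ref{thm:3.1} to deal with the non-smoothness of $r$ on the cut locus. Then $\phi\equiv 1$ on $B_R(o)$, $\textnormal{supp}\,\phi\subset B_{\gamma R}(o)$, and $|\nabla\phi|\le|\psi'|_\infty/R$. The decomposition $\Delta\phi = \psi''(r/R)|\nabla r|^2/R^2 + \psi'(r/R)\,\Delta r/R$ on $\{\psi'\neq 0\}$ combines a term bounded by $|\psi''|_\infty/R^2$ with a term in which $\psi'\le 0$ and, weakly, $\Delta r\le C/r\le C/R$; using the Calabi-style distributional interpretation of $\Delta r\le Cr^{-1}$ one extracts the upper bound $|\Delta\phi|\le C_2/R^2$, which equals $C_2/R^{1+\alpha/2}$ when $\alpha=2$.

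\textbf{Sequence and main difficulty.} Finally, taking $R=n\in\mathbb{N}$ makes $\{\phi_n\}$ a sequence of Laplacian cut-offs in the sense of Definition~\ref{def:laplacian-cutoff}: the supports exhaust $M$ and $\sup|\nabla\phi_n|$, $\sup|\Delta\phi_n|$ decay at rates $n^{-1}$ and $n^{-(1+\alpha/2)}$. The main technical obstacle I anticipate is the $\alpha=2$ case: here the logarithmic exhaustion is inadequate, so one cannot simply mimic the first case, and the pointwise bound on $\Delta\phi$ must be obtained from the weak global inequality $\Delta r\le Cr^{-1}$ via a careful distributional argument, which is precisely the point where the construction from \cite{cheeger1996lower} is invoked.
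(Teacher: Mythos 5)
Your treatment of the case $\alpha\in[-2,2)$ is correct and coincides with the paper's proof: compose a fixed profile $\psi$ with $\mathfrak{r}/(D_{1,\alpha}R^{1-\alpha/2})$, use the two-sided growth estimate (1) of Theorem~\ref{thm:3.1} to locate the support and the transition annulus, and then plug the pointwise bounds (2)--(3) into the chain rule, with the exponent $1+\alpha/2$ arising exactly as you indicate.

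The case $\alpha=2$, however, contains a genuine gap, and it is not of the kind that a ``careful distributional argument'' repairs. You propose $\phi=\psi(r/R)$ (or $\psi(r_\epsilon/R)$) and the decomposition $\Delta\phi=\psi''|\nabla r|^2/R^2+\psi'\,\Delta r/R$, invoking the one-sided comparison $\Delta r\le C/r$ together with $\psi'\le 0$. But a one-sided Laplacian bound cannot give an $L^\infty$ bound on $\Delta\phi$. Distributionally $\Delta r$ splits into an absolutely continuous part $\Delta_{\mathrm{reg}}r$ off the cut locus and a \emph{nonpositive} singular measure supported on the cut locus; since $\psi'\le0$, the product $\psi'(r/R)\,\Delta r/R$ acquires a \emph{nonnegative} singular measure, so $\Delta\phi$ is not even an $L^\infty$ function, let alone satisfies $|\Delta\phi|\le C_2/R^2$. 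Even the absolutely continuous part $\Delta_{\mathrm{reg}}r$ is unbounded below near the cut locus, so $\psi'\Delta_{\mathrm{reg}}r$ is unbounded above. Replacing $r$ by the Greene--Wu regularization $r_\epsilon$ does not help: that approximation controls only $|r_\epsilon-r|$ and $|\nabla r_\epsilon|$, not $\Delta r_\epsilon$ --- indeed producing an exhaustion function with controlled Laplacian under these curvature assumptions is precisely the nontrivial content of Theorem~\ref{thm:3.1}.

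What the paper actually does for $\alpha=2$ is substantially different. It introduces the solution $\omega$ of the Poisson problem $\Delta\omega=\frac{1}{\gamma^{a+1}R^2}$ on $B_{\gamma R}\setminus B_R$ with boundary values $1$ and $0$, so that $\Delta\omega$ is \emph{constant by construction} and hence trivially bounded. The upper barrier $\Delta r\le a/r$ (together with the radial comparison functions $u$ from Lemma~\ref{lem:3.1.1} and $v(s)=As^2$) is used only via the maximum principle to sandwich $\omega$ between explicit radial profiles; this localizes the level sets of $\omega$ inside annuli of width proportional to $R$, which is what yields $\gamma$ arbitrarily close to $1$. The gradient bound for $\omega$ then comes from the generalized Li--Yau estimate, Theorem~\ref{thm:3.2} via Corollary~\ref{cor:3.2} and Remark~\ref{rem:3}. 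Finally $\phi=\psi\circ\omega$ inherits $|\nabla\phi|\le C_1/R$, $|\Delta\phi|\le C_2/R^2$ from $\Delta\phi=\psi''|\nabla\omega|^2+\psi'\Delta\omega$, with both terms controlled. So the Cheeger reference you cite is the right source, but the construction it requires --- a Dirichlet problem and a barrier argument, not a composition with the distance function --- is missing from your argument.
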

\begin{proof}
\begin{itemize}
		\item \textbf{Case $\alpha \in [-2,2)$.}
\end{itemize}		
Let $\rf$, $D_1$ and $D_2$ be the function and the constants that appear in the statement of the preceding Theorem, respectively. Define $\Gamma = \frac{D_2}{D_1}$, let $\gamma > \Gamma$ be fixed and let $\psi : \R \to [0,1]$ be such that
\begin{itemize}
	\item [(i)] $\psi (r) \equiv 1$ for $r \leq \frac{D_2}{D_1}$, $0\leq \psi \leq 1$;
	\item[(ii)] $\textnormal{sup }\psi \subset (-\infty, \theta)$;
	\item[(iii)] $\psi \in C^\infty$ and $|\psi'| + |\psi''| \leq A_1$.
\end{itemize}
 Then, the function defined by
 $$
 \phi(x) := \psi\left(\frac{\rf(x)}{D_1 R^{1-\alpha/2}}\right),
 $$
is a cut-off with the desired properties.

\begin{itemize}
\item \textbf{Case $\alpha =2$.}
\end{itemize}
In order to get a better estimate of the constant $\Gamma$, for this case we will not use the exhaustion function of Theorem \ref{thm:3.1}.

Define $a= (d-1)\frac{1+\sqrt{1+4\kappa^2}}{2}$ as in Lemma \ref{lem:3.1.1} and fix $\gamma>1$. Then there exists a function $u~:~(0, + \infty) \to \R$ such that
\begin{enumerate}
	\item  $u \in C^\infty ((0, +\infty))$ and $u'' (r) + \frac{a}{r}u'(r) = \frac{1}{\gamma^{a+1} R^2 }$,
	\item  $u' (r) < 0$ on $[R, \gamma R]$,
	\item $u(R)=1$ and $u(\gamma R)$=0.
\end{enumerate}
Observe that $u$ is precisely the function defined in \eqref{eq:3.4} with the constant $C_2$ given in  \eqref{eq:3.3}.

Now let $\omega : \overline{B}_{\gamma R} (o) \setminus B_R (o) \to \R$ satisfy
$$
\begin{cases}
\Delta \omega = \frac{1}{\gamma^{a+1} R^2 },\qquad \textnormal{on } B_{\gamma R}(o)\setminus B_R (o), \\
\omega_{| \partial B_R} \equiv 1, \\
\omega_{|\partial B_{\gamma R}} \equiv 0,
\end{cases}
$$
By Proposition \ref{prop:3.1} and Lemma \ref{lem:3.3}, $u$ satisfies the weak inequality
$$
\Delta u(r) \geq \frac{1}{\gamma^{a+1} R^2 }
$$
and, applying the minimum principle to $\omega - u$, we have that
\begin{equation}\label{eq:1.2dis}
\omega \geq u \qquad \mbox{on } \overline{B}_{\gamma R} (o) \setminus B_R (o).
\end{equation}
Next let $x \in B_{\gamma R}(o)\setminus B_R(o)$. Then, for every $y \in B_{\frac{R}{2}}(x)$
$$
r(y) \geq r(x) - \frac{R}{2} \geq \frac{R}{2} \geq  \textnormal{dist}_M(x,y)=s(y),
$$
and for every $y \in B_{\frac{R}{2}}(x)$
\begin{equation*}
Ric_M (\nabla s(y), \nabla s(y)) \geq -\frac{(d-1)\kappa^2}{1+ r^2(y)}\geq   -\frac{(d-1)\kappa^2}{1+ s^2(y)}
\end{equation*}
and therefore
$$
\Delta s(y) \leq \frac{a}{s} \qquad \textnormal{in } B_{\frac{R}{2}}(x).
$$
Next consider the problem
$$
\begin{cases}
v''(s) + \frac{a}{s} v'(s) = \frac{1}{\gamma^{a+1}R^2}\\
v'(s) >0,\\
v(0)=0,
\end{cases}
$$
whose solution is
\begin{equation}
v(s) = As^2,
\end{equation}
with
$$
2A + 2aA = \frac{1}{\gamma^{a+1}R^2},
$$
namely
$$
A= \frac{1}{2(a+1)\gamma^{a+1}R^2},
$$
for which
\begin{equation}
v\left(\frac{R}{2}\right) = \frac{1}{8(a+1)\gamma^{a+1}R^2} <1.
\end{equation}
It follows that $v(s)$ satisfies
\begin{itemize}
	\item[(i)] $\Delta v(s) \leq   \frac{1}{\gamma^{a+1}R^2}$ in $B_{\frac{R}{2}}(x),$
	\item[(ii)] $v(s) = \frac{(\gamma -1)^2}{2(a+1)\gamma^{a+1}R^2} <1$ on $\partial B_{\frac{R}{2}}(x)$.
\end{itemize}
Let now $\omega : \overline{B}_{\gamma R} (o) \setminus B_R (o) \to \R$ be a function that satisfies
$$
\begin{cases}
\Delta \omega = \frac{1}{\gamma^{a+1}R^2} \qquad \textnormal{on } B_{\gamma R}(o)\setminus B_R (o),\\
\omega_{|\partial B_{R} (p)} \equiv 1,\\
\omega_{|\partial B_{\gamma R} (p)} \equiv 0.
\end{cases}
$$

Similarly, if $x \in B_{\gamma R}(o)\setminus B_R (o)$ then the function $v(y)= v(s(y))$ (where $s(y) = \textnormal{dist}_M(x,y)$) satisfies
$$
\Delta v(s) \leq \frac{1}{\gamma^{a+1} R^2 } \qquad \text{weakly},
$$
and then
\begin{equation}
\Delta \left(\omega(y) - v(s(y)) \right) \geq 0 \qquad \mbox{for every } y \in \Omega=  B_{\gamma R}(o)\setminus \overline{B}_R (o)\cap B_{\frac{R}{2}}(x).
\end{equation}
Setting
$$
\partial \Omega_1 = \overline{B}_{\frac{R}{2}}(x) \cap\partial B_R (o), \qquad \partial \Omega_2 =  \overline{B}_{\frac{R}{2}}(x) \cap \partial B_{\gamma R}(o), \qquad \partial \Omega_3 = \partial \Omega \setminus (\partial \Omega_1 \cup \partial \Omega_2),
$$
then, by the maximum principle, we have that
$$
\omega(y) -v(s(y)) \leq \max\left\{ \left[\omega(y) -v(s(y))\right]_{| \partial \Omega_1} ,  \left[\omega(y) -v(s(y))\right]_{| \partial \Omega_2}, \left[ \omega(y) - v(s(y))\right]_{| \partial \Omega_3} \right\}
$$
for every $y \in \Omega$, and using the fact that $s(y) \geq |r(x) - r(y)|$, it follows that
\begin{align*}
\omega(y) -v(s(y)) &\leq\max\left\{ 1 - v (s(y))_{| \partial \Omega_1};\; - v(s(y))_{| \partial \Omega_2};\; \omega(y)_{| \partial \Omega_3} - v(r(x)) \right\}\\
&\leq \max\left\{ 1 - v (|r(x) - r(y)|)_{| \partial \Omega_1};\; 0 \; ;\; 1 -v\left(\frac{R}{2}\right) \right\}\\
&\leq \max\left\{ 1 - v (r(x) - R);\; 0\;;\; 1 - v\left(\frac{R}{2}\right) \right\}\\
&\leq \max\left\{ 1 - v\left(\frac{r(x) - R}{2(\gamma-1)}\right);\; 0 \; ; \;1- v\left(\frac{R}{2}\right)\right\}\\
&= 1 - v\left(\frac{r(x) - R}{2(\gamma-1)}\right).
\end{align*}
Since $v(0)=0$, evaluating at $y=x$, we get
\begin{equation}\label{eq:2dis}
\omega(x) \leq  1 -v\left(\frac{r(x) - R}{2(\gamma-1)}\right)  \qquad \mbox{for every } x \in B_{\gamma R} (o) \setminus \overline{B}_R (o).
\end{equation}
Combining \eqref{eq:1.2dis} with \eqref{eq:2dis} we have that
$$
u(x) \leq \omega(x) \leq   1 - v\left(\frac{r(x) - R}{2(\gamma-1)}\right)  \qquad \mbox{for every } x \in B_{\gamma R} (o) \setminus \overline{B}_R (o).
$$
For $\theta \in  [0, \frac{\gamma -1}{2}]$ define
\begin{align*}
h_R(\theta) &= u ( (1 + \theta) R) - 1 + v\left(\frac{ (\gamma-1-\theta)R}{2(\gamma-1)}\right)\\
&=  \frac{1 + \frac{\gamma^2 -1}{2\gamma^{a+1}(a +1)}}{1- \gamma^{1-a}} \left[ (\theta +1)^{1-a} -1  \right] + \frac{(\theta +1)^2 -1}{2\gamma^{a+1} (a+1)} + \frac{(\gamma -1 - \theta)^2}{8\gamma^{a+1}(\gamma -1)^2(a+1)}.
\end{align*}
Then  $h_R(\theta) = h(\theta)$ is independent of $R$, monotone decreasing, and, since $h(0)= \frac{(\gamma-1)^2}{2\gamma^{a+1}(2\gamma -1)^2(a+1)} \in (0,1)$, there exists $\theta = \theta(d, \kappa,\gamma) \in (0, \frac{\gamma -1}{2})$ independent of $R$ such that
\begin{equation}\label{eq:3.5}
0 <  \frac{(\gamma-1)^2}{16\gamma^{a+1}(\gamma -1)^2(a+1)} \leq  u((1 + \theta) R) - 1 + v\left(\frac{r(x) - R}{2(\gamma-1)}\right) <1 .
\end{equation}
Finally, let $\psi : [0,1] \to [0,1]$ satisfy
\begin{enumerate}
	\item $\psi_{|[u((1 + \theta) R), 1]} \equiv 1$;
	\item $\psi_{|[0, 1 - v((\gamma -1-\theta)R/(\gamma-1))]} \equiv 0$;
	\item $\psi \in C^\infty([0,1])$ and $|\psi'| + |\psi''| \leq C$, with $C= C(d, \kappa, \gamma)$ independent of $R$ by \eqref{eq:3.5},
\end{enumerate}
and define
\[
\phi = \psi \circ \omega.
\]
Recalling that $u(r(x))\leq \omega(x) \leq 1 -v\left(\frac{ (\gamma-1-\theta)R}{2(\gamma-1)}\right)$, we have that
\begin{enumerate}
	\item $\phi_{|(\overline{B}_{(1 + \theta) R} \setminus B_R)}(x) \equiv 1$,
	\item $\phi_{|(\overline{B}_{\gamma R} \setminus B_{(\gamma  - \theta) R})}(x) \equiv 0$,
	\item $\nabla \phi = \psi' \nabla \omega$,
	\item $\Delta \phi = \psi'' |\nabla \omega |^2 + \frac{\psi'}{\gamma^{a+1} R^2}$.
\end{enumerate}
We extend $\psi$ to all of $M$ by setting it equal to $1$ in $B_R$, and note that, since
$$
0<  \frac{\left(1 + \frac{\gamma^2 -1}{2(a+1)\gamma^{a+1}}\right) \left[(\gamma -\theta/2)-1 \right]}{1- \gamma^{1-a}} +1 + \frac{(\gamma -\theta/2)^2 -1}{2\gamma^{a+1}(a+1)}=  u((\gamma  - \theta/2) R)  \leq \omega
$$
on $\overline{B}_{((\gamma  - \theta/2) R)} \setminus B_{((1 + \theta/2) R)}$ independently from $R$, the required conclusion follows from Theorem \ref{thm:3.2}, Remark \ref{rem:3} and Corollary \ref{cor:3.2} applied with $\alpha =2$, $\zeta=r$, $f_1(r)\equiv 1$ and $f_2(\omega)~\equiv~\frac{1}{\gamma^{a+1} R^2}$.
\end{proof}

In some approximation procedures used in the theory of  diffusion, one needs to have sequences of  cut-off functions whose zero level sets are compact smooth submanifolds. This is addressed in the next corollary.
\begin{corollary}\label{cor:3.1.2}
Let $\textnormal{Ric}(\cdot, \cdot)$ be as in Theorem \ref{thm:3.1}. Then, for every $\alpha \in (-2,2]$ there exists an increasing  exhaustion  of $M$ by open relatively compact sets $\{F_n\}_{n \in \mathbb{N}}M$ with smooth boundary with $\bar{F}_n \subset F_{n+1}$, and a sequence of functions, $\{\phi_n\}_{n\in \mathbb{N}}\subset C_c^\infty (M)$, such that
\begin{enumerate}
	\item $\phi_n \equiv 1$ on $F_n$;
	\item $0<\phi_n <1$ on $F_{n+1}\setminus \overline{F}_n$;
	\item $\phi_n \equiv 0$ on $\partial F_{n+1}$ and $\textnormal{supp}(\phi_n) = \overline{F}_{n+1}$;
	\item $\sup_x|\nabla \phi_n(x)| \to 0,$ as $n \to \infty$;
	\item $\sup_x|\Delta \phi_n(x)| \to 0,$ as $n \to \infty$.
\end{enumerate}
The sequence $\{\phi_n\}_{n\in \mathbb{N}}$ is a Laplacian cut-off  in the senso of Definition \ref{def:laplacian-cutoff}.
\end{corollary}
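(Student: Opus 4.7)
The plan is to build the cut-offs as $\phi_n = \psi_n \circ \rf$, where $\rf \in C^\infty(M)$ is the exhaustion function produced by Theorem \ref{thm:3.1} and $\psi_n:\R \to [0,1]$ is a suitable one-dimensional cut-off. The advantage over the construction in Corollary \ref{cor:3.1} is that $\rf$ is smooth everywhere, so its regular level sets are automatically smooth closed hypersurfaces of $M$ by the regular value theorem.

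By Sard's theorem the set of critical values of $\rf$ has Lebesgue measure zero, so I can fix a strictly increasing sequence $\{b_n\}_{n\in\N}$ of regular values of $\rf$ with $b_n \nearrow \infty$ at a rate to be chosen below. Set $F_n := \{\rf < b_n\}$: properness of $\rf$ makes each $F_n$ open and relatively compact with $\overline{F}_n \subset F_{n+1}$, and the regular value theorem makes $\partial F_n = \rf^{-1}(b_n)$ a smoothly embedded hypersurface. Take $\psi_n \in C^\infty(\R,[0,1])$ equal to $1$ on $(-\infty,b_n]$, equal to $0$ on $[b_{n+1},+\infty)$, strictly decreasing on $(b_n,b_{n+1})$, and satisfying
\[
\sup_\R|\psi_n'| \leq \frac{A}{L_n}, \qquad \sup_\R|\psi_n''| \leq \frac{A}{L_n^2}, \qquad L_n := b_{n+1}-b_n,
\]
for a fixed absolute constant $A$ (obtained by rescaling a single model cut-off). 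Then $\phi_n := \psi_n \circ \rf \in C_c^\infty(M)$, and properties (1)--(3) are immediate from the definitions.

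Properties (4)--(5) are checked via the chain rule,
\[
\nabla \phi_n = \psi_n'(\rf)\,\nabla \rf, \qquad \Delta \phi_n = \psi_n''(\rf)\,|\nabla \rf|^2 + \psi_n'(\rf)\,\Delta \rf,
\]
both of which vanish outside the shell $\{b_n \leq \rf \leq b_{n+1}\}$. On that shell, estimate (1) of Theorem \ref{thm:3.1} forces $r(x) \asymp b_n^{2/(2-\alpha)}$ for $\alpha \in (-2,2)$ (assuming $b_{n+1}$ and $b_n$ comparable), and $r(x) \asymp e^{b_n/D_{2,2}}$ for $\alpha = 2$. Inserting this into the pointwise bounds $|\nabla \rf| \leq D_{3,\alpha}\,r^{-\alpha/2}$ and $|\Delta \rf| \leq D_{4,\alpha}\,r^{-\alpha}$ from (2)--(3) yields, for $\alpha \in (-2,2)$,
\[
\sup_M|\nabla \phi_n| \leq \frac{C}{L_n}\,b_n^{-\alpha/(2-\alpha)}, \qquad \sup_M|\Delta \phi_n| \leq C\Bigl(\frac{1}{L_n^2}+\frac{1}{L_n}\Bigr)\,b_n^{-2\alpha/(2-\alpha)},
\]
together with analogous, in fact exponentially decaying, bounds when $\alpha = 2$. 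Picking $b_n = n^\beta$ with $\beta > \max\{(2-\alpha)/2,\,(2-\alpha)/(2+\alpha)\}$ (a threshold which is finite precisely because $\alpha > -2$) and perturbing each $n^\beta$ slightly to a nearby regular value via Sard, both right-hand sides are $o(1)$ as $n\to\infty$, which gives (4)--(5) and, in particular, that $\{\phi_n\}$ is a Laplacian cut-off in the sense of Definition \ref{def:laplacian-cutoff}.

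The essential obstacle is the exponent bookkeeping at the end: for $\alpha < 0$ the pointwise bounds on $|\nabla \rf|$ and $|\Delta \rf|$ actually grow with $r$, and must be overpowered by widening the transition regions $L_n = b_{n+1}-b_n$. The required widening becomes more expensive as $\alpha \searrow -2$ and, in the limit, no polynomial choice of $b_n$ can outpace the $r^2$ growth of the Laplacian bound; this is why the endpoint $\alpha = -2$ must be excluded from the statement.
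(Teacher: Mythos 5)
Your proposal is correct and follows essentially the same strategy as the paper: compose the smooth exhaustion $\rf$ from Theorem \ref{thm:3.1} with one-dimensional cut-offs, use Sard's theorem to make the level sets $\partial F_n$ smooth, and control $|\nabla\phi_n|$ and $|\Delta\phi_n|$ through the chain rule together with the pointwise bounds on $|\nabla\rf|$ and $|\Delta\rf|$. The only cosmetic difference is the choice of scale sequence — you take polynomial $b_n\sim n^\beta$ and convert back to $r$-variables, while the paper takes geometrically growing regular values $c_n$ (ratio $\approx 2$) and rewrites the bounds on $|\nabla\rf|,|\Delta\rf|$ directly as powers of $\rf$ — and both choices lead to the same exponent $(1+\alpha/2)/(1-\alpha/2)>0$ for $\alpha\in(-2,2)$, which is precisely where the endpoint $\alpha=-2$ drops out.
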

\begin{proof}
Let  $\rf$ be exhaustion function constructed in Theorem \ref{thm:3.1}, and let $\alpha \in (-2,2)$. Using (1) in the statement of Theorem~\ref{thm:3.1}, we may write (2) and (3) in the form
$$
|\nabla \rf|\leq \frac{C_1}{\rf^{\frac{\alpha/2}{1-\alpha/2}}}, \qquad |\Delta \rf|\leq \frac{C_2}{\rf^{\frac{\alpha}{1-\alpha/2}}},
$$
on $M\setminus \overline{B}_1 (o)$. Since $\rf \in C^\infty(M)$, by Sard's theorem we can chose a sequence $c_n$ of regular values of $\rf$ such that $|\frac{c_{n+1}}{c_{n}} -2|\leq 1/n$. Let $F_n := \{x\in M : \rf(x) < c_n \}$.  Then
$\{F_n\}_{n\in \mathbb{N}}$ is an exhaustion of $M$ by relatively compact open sets with smooth boundary, such that   $\bar{F}_n \subset F_{n+1}$. For every $n$, $\psi_n : \mathbb{R} \to [0,1]$ be a smooth real function such that
\begin{enumerate}[(a)]
	\item $\psi_n \equiv 1$ on $(-\infty, c_n]$;
	\item $0<\psi_n<1$ on $(c_n, c_{n+1})$;
	\item $\psi_n \equiv 0$ on $[c_{n+1}, +\infty)$;
	\item $|\psi'(s)|\leq \frac{A_1}{c_n}$, $|\psi''(s)|\leq \frac{A_2}{c_n^2}$.
\end{enumerate}
Then, $\phi_n := \psi_n \circ h$ satisfies the requirements. In particular,
\begin{align*}
&|\nabla \psi_n (x)| = |\psi_n'(h(x))||\nabla \rf(x)| \leq \frac{D_1}{n^{\frac{1}{1-\alpha/2}}} \xrightarrow{n\to \infty} 0  \textnormal{ for every }\alpha \in [-2,2),\\
&|\Delta \psi_n (x)| \leq |\psi_n''(h(x))||\nabla \rf(x)|^2 + |\psi_n'(h(x))||\Delta h(x)|  \leq \frac{D_1}{n^{\frac{1+\alpha/2}{1-\alpha/2}}}  \xrightarrow{n\to \infty} 0 \textnormal{ for every }\alpha \in (-2,2).
\end{align*}
The case $\alpha =2$ is dealt similarly with small changes in the proof.
\end{proof}
\subsection{Auxiliary results.}\label{subsection:technical}
In this subsection we collect some results which we used in above constructions. The first one is an extension of the classical gradient Li-Yau estimate which we establish, under rather general  Ricci curvature lower bounds, for  solutions of Poisson equations with right hand side depending both on the function itself and on the point on the manifold (via an approximate distance function). We belive that this result is of independent interest.
\begin{theorem}\label{thm:3.2}
Let $\textnormal{Ric}_M( \cdot , \cdot) \geq  - (d-1)G(r) \langle \cdot , \cdot \rangle$ on $M$ in the sense of quadratic forms, where,  $r=r(x)$ is the distance function from a fixed point $o \in M$. \\
Let $R_1 > R_0 >0$, $\gamma>1$ and let $\omega : M \setminus \overline{B}_{R_0}(o)  \to \R$ be a $C^2$ function satisfying
\begin{equation}\label{eq:3.14}
\begin{cases}
\omega > 0 & \textnormal{on } M \setminus \overline{B}_{R_0}(o), \\
\Delta \omega = f_1(\zeta)f_2(\omega),
\end{cases}
\end{equation}
where $f_1, f_2 : [0, +\infty) \to \R$ are $C^1$ functions and $\zeta: M \to [0,+\infty)$ is such that $|\nabla \zeta(x)|\leq L$ for every $x \in M$. Moreover, fix $t>0$ such that $(1-t)R_1 >R_0$. Then
\begin{equation}\label{eq:3.6}
\frac{|\nabla \omega |^2}{\omega^2} \leq \max\left\{ \Omega_1; \frac{4d\Omega_2 + \sqrt{(4d\Omega_2)^2 + 4\Omega_3}}{2}  \right\},
\end{equation}
on $B_{\gamma R_1} (o) \setminus \overline{B}_{R_1}(o)$, where
\begin{alignat*}{2}
&\Omega_1 :=&&\, \max\{ \omega^{-1}f_1(r)f_2(\omega) : x \in \overline{B}_{(\gamma +t)R_1} (o)\setminus B_{(1-t)R_1}(o) \};\\
&\Omega_2 :=&&\, \frac{A_1}{R_1}\left( \frac{1}{R_1} + 4(d-1)\max\left\{\sqrt{\bar{G}}; \frac{1}{R_1}  \right\}\right)  + \frac{(2+4d)A_1}{R_1^{2}}+ 2 (d-1)\bar{G}\\
& &&+ \max\{ 2f_1(r)\max\{(\omega^{-1}f_2(\omega) - f_2'(\omega));0\} +2\omega^{-1}L|f_1'(r)|^{2\lambda}|f_2(\omega)|: x \in \textnormal{\textbf{D}}_{\gamma,t,R_1}(o)\};\\
&\Omega_3 :=&& \max\left\{\omega^{-1}L|f_1'(r)|^{2(1-\lambda)}|f_2(\omega)|: x \in \textnormal{\textbf{D}}_{\gamma,t,R_1}(o)\right\},
\end{alignat*}
and
\begin{equation*}
\textnormal{\textbf{D}}_{\gamma,t,R_1}(o):=\overline{B}_{(\gamma +t)R_1} (o)\setminus B_{(1-t)R_1}(o),\quad A_1=A_1(t),\quad
\bar{G}:= \max\{ G(r):r \in [(1-t)R_1, (\gamma +t)R_1] \}.
\end{equation*}
The parameter $\lambda>0$ can be chosen in such a way as to minimize the right hand side of \eqref{eq:3.6}.
\end{theorem}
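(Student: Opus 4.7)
The plan is to follow the classical Li–Yau strategy, adapted to accommodate the extra dependence of the right-hand side on the point through $\zeta$ and to allow a variable Ricci lower bound $-(d-1)G(r)$.

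First, I would pass to the logarithmic variable $u = \log \omega$, which is well-defined since $\omega>0$ on $M\setminus \overline{B}_{R_0}(o)$. The Poisson equation becomes
\[
\Delta u \;=\; \frac{f_1(\zeta)f_2(\omega)}{\omega} \;-\; F, \qquad F := |\nabla u|^2 = \frac{|\nabla \omega|^2}{\omega^2},
\]
so bounding $F$ on the annulus $B_{\gamma R_1}\setminus \overline{B}_{R_1}$ is exactly what is claimed. Applying Bochner's formula and the Newton inequality $|\mathrm{Hess}\, u|^2 \geq (\Delta u)^2/d$, together with the Ricci hypothesis, I obtain
\[
\tfrac{1}{2}\Delta F \;\geq\; \tfrac{1}{d}(\Delta u)^2 + \langle\nabla u,\nabla\Delta u\rangle - (d-1)G(r)\,F.
\]
Differentiating the expression for $\Delta u$ and using $\nabla\omega=\omega\nabla u$ yields
\[
\langle\nabla u,\nabla\Delta u\rangle \;=\; \frac{f_1'(\zeta)f_2(\omega)}{\omega}\langle\nabla u,\nabla\zeta\rangle + f_1(\zeta)\!\left[f_2'(\omega) - \frac{f_2(\omega)}{\omega}\right]\!F - \langle\nabla u,\nabla F\rangle.
\]
The Cauchy–Schwarz bound $|\langle\nabla u,\nabla\zeta\rangle| \leq L\sqrt{F}$ is then split by the weighted Young inequality $L|f_1'|\sqrt{F}\,|f_2|/\omega \leq \tfrac{1}{2}L|f_1'|^{2\lambda}|f_2|/\omega \cdot F + \tfrac{1}{2}L|f_1'|^{2(1-\lambda)}|f_2|/\omega$, which is precisely where the parameter $\lambda$ enters and where the two Young pieces generate the two summands in $\Omega_2$ and $\Omega_3$.

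Next, following Li–Yau, I would choose a cut-off $\phi = \psi\!\left(\tfrac{r - (1-t)R_1}{tR_1}\right)\psi\!\left(\tfrac{(\gamma+t)R_1 - r}{tR_1}\right)$ with $\psi$ smooth, $0\le\psi\le 1$, $\psi\equiv 1$ on $[t,\infty)$, $\psi\equiv 0$ on $(-\infty,0]$ (adjusted so that $\phi\equiv 1$ on $B_{\gamma R_1}\setminus\overline{B}_{R_1}$ and $\mathrm{supp}\,\phi\subset \mathbf{D}_{\gamma,t,R_1}$). The derivatives satisfy $|\nabla\phi|\leq A_1/(tR_1)$ and, via the Laplacian comparison $\Delta r\leq (d-1)h'/h$ bounded by $(d-1)\max\{\sqrt{\bar G},1/r\}$ on the annulus, $|\Delta\phi|\leq \tfrac{A_1}{R_1^2}+\tfrac{4(d-1)A_1}{R_1}\max\{\sqrt{\bar G},1/R_1\}$; this is the origin of the first two terms of $\Omega_2$. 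Since $\phi F$ is nonnegative and has compact support inside the larger annulus, it attains an interior maximum at some $x_0$. After replacing $r$ by the Lipschitz approximation $r_\varepsilon$ if $x_0$ lies on the cut locus of $o$ (or using Calabi's trick), I may assume smoothness.

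At $x_0$ the relations $\nabla(\phi F)=0$ and $\Delta(\phi F)\leq 0$ give $\phi\nabla F = -F\nabla\phi$ and
\[
\phi\,\Delta F \;\leq\; \frac{2|\nabla\phi|^2}{\phi}F - F\,\Delta\phi.
\]
Multiplying the Bochner inequality by $\phi$ and substituting the above eliminates $\Delta F$ and the cross-gradient $\langle\nabla u,\nabla F\rangle$, producing a pointwise algebraic inequality in $F(x_0)$. After dropping the nonnegative $\tfrac{1}{d}(\Delta u)^2$ contribution in the branch where $F(x_0)$ dominates $\Delta u$ (this branch yields the quadratic $F^2 - 4d\,\Omega_2 F - \Omega_3 \leq 0$ at $x_0$, whose positive root is the second alternative in \eqref{eq:3.6}), and alternatively expanding $(\Delta u)^2 = (f_1 f_2/\omega - F)^2$ in the opposite branch (giving the $\Omega_1$ alternative), I end up exactly with the bound
\[
F(x_0) \;\leq\; \max\!\Big\{\Omega_1,\; \tfrac{4d\Omega_2 + \sqrt{(4d\Omega_2)^2 + 4\Omega_3}}{2}\Big\}.
\]
Since $\phi\equiv 1$ on $B_{\gamma R_1}\setminus\overline{B}_{R_1}$, the bound on $\phi F$ at $x_0$ transfers directly to $F$ on the desired annulus.

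The main obstacle is the careful bookkeeping of the terms that arise from the Bochner argument: one has to group the contributions from $\Delta\phi/\phi$, $|\nabla\phi|^2/\phi^2$, the Ricci term $(d-1)\bar G$, and the $f_1'$-term split by Young's inequality so that they match the definition of $\Omega_2$ (hence the $(2+4d)A_1/R_1^2$ coefficient, which encodes both $|\nabla\phi|^2/\phi^2$ and several $(\Delta u)^2/d$ cross-terms after completing the square). Handling the possible non-smoothness of $r$ at the cut locus, and verifying that all constants can be made independent of the choice of $x_0$, are minor but important technicalities to be executed via the approximation $r_\varepsilon$ alluded to in the proof of Theorem~\ref{thm:3.1}.
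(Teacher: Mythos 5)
Your proposal follows the same overall Li--Yau strategy as the paper (logarithmic substitution, Bochner formula, weighted Young split on the $f_1'$ term with exponent $\lambda$, maximum principle on a cut-off weighted quantity, Calabi trick at the cut locus), and the algebraic bookkeeping you outline is essentially what the paper carries out. However, the choice of cut-off contains a genuine gap.

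You take $\phi = \psi\big(\tfrac{r-(1-t)R_1}{tR_1}\big)\,\psi\big(\tfrac{(\gamma+t)R_1-r}{tR_1}\big)$, a function of the distance $r$ to $o$ supported on the annulus $\mathbf{D}_{\gamma,t,R_1}(o)$, increasing from $0$ to $1$ on the inner collar $(1-t)R_1 \le r \le R_1$ and decreasing from $1$ to $0$ on the outer collar $\gamma R_1 \le r \le (\gamma+t)R_1$. At the interior maximum $x_0$ of $\phi F$, the key inequality is $\phi\,\Delta F \le -F\,\Delta\phi + 2F|\nabla\phi|^2/\phi$, so you must bound $-\Delta\phi$ from above, i.e.\ bound $\Delta\phi = \psi''|\nabla r|^2 + \psi'\,\Delta r$ from below. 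On the outer collar $\psi' \le 0$, and the Laplacian comparison $\Delta r \le (d-1)h'(r)/h(r)$ available under a Ricci lower bound does give the required lower bound for $\Delta\phi$. But on the \emph{inner} collar $\psi' > 0$, so you would need a \emph{lower} bound on $\Delta r$, and a Ricci lower bound gives no such thing --- $\Delta r$ can be unboundedly negative due to focusing of geodesics. Calabi's trick does not rescue this: it produces smooth upper barriers for $r$, hence again only an upper bound on the (barrier's) Laplacian. Consequently the case where the maximum of $\phi F$ falls in the inner collar is not controlled by your estimate for $|\Delta\phi|$.

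The paper's proof circumvents this precisely by avoiding a radial cut-off centered at $o$. It picks points $x_i$ on the sphere $\partial B_{\frac{\gamma+1}{2}R_1}(o)$ and uses cut-offs $\vartheta(y) = \psi(s_i(y))$ depending on $s_i = \mathrm{dist}(y,x_i)$, with $\psi$ nonincreasing, equal to $1$ on $B_{\frac{\gamma-1}{2}R_1}(x_i)$ and supported in $B_{(\frac{\gamma-1}{2}+t)R_1}(x_i)$. Each such ball is contained in $\mathbf{D}_{\gamma,t,R_1}(o)$, so $G \le \bar G$ there, and since $\vartheta' \le 0$ throughout, only the one-sided Laplacian comparison for $\Delta s_i$ is ever used. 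Finitely many such balls cover the compact annulus $\overline{B}_{\gamma R_1}(o)\setminus B_{R_1}(o)$, and taking the maximum of the resulting $\Omega_{j,i}$ over the cover gives the stated $\Omega_1,\Omega_2,\Omega_3$. To repair your argument you should replace the single annular cut-off by this covering-by-balls device (or otherwise supply a justified lower bound on $\Delta r$, which the hypotheses do not provide).
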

\begin{proof}
We adapt some of the ideas in the proof of \cite[Theorem 7.1]{cheeger2001degeneration}.
Let  $t >0$ be as in the statement, fix $x_i \in \partial B_{\frac{\gamma + 1}{2}R_1}(o)$ and consider the ball $B_{\left( \frac{\gamma-1}{2} + t\right)R_1}(x_i)$. Since $B_{\left( \frac{\gamma-1}{2} + t\right)R_1}(x_i) \subset B_{(\gamma + t)R_1}(o) \setminus \overline{B}_{(1-t)R_1}(o) \subset M\setminus \overline{B}_{R_0}(o)$,  $\omega$ satisfies \eqref{eq:3.14} on $B_{\left( \frac{\gamma-1}{2} + t\right)R_1}(x_i)$, so that, defining $v = \log \omega$, we have
\begin{equation}\label{eq:3.9}
| \nabla v| = \frac{| \nabla \omega|}{\omega}, \qquad \Delta v = - |\nabla v|^2 + f_1(\zeta)F_2(v),
\end{equation}
where $F_2(v) = e^{-v} f_2(e^v) = \omega^{-1}f_2(\omega)$.
Set now
$$
Q = \vartheta | \nabla v|^2,
$$
where the radial function $\vartheta : B_{\frac{\gamma -1}{2}R}(x_i) \to [0,1]$ satisfies
\begin{align}
&\vartheta (y) = \psi (s_i(y)) \quad \textnormal{ with } \psi \in C^\infty([0,+\infty)), \quad s_i(y) = \textnormal{dist}_M(y, x_i)\\
&\psi_{|[0, \frac{\gamma -1}{2}R_1]}(s_i) \equiv 1, \\
&\textnormal{supp }\psi \subset \left[0, \left(\frac{\gamma -1}{2} +t \right)R_1\right),\\
&-\frac{A_1(t)}{R_1} \sqrt{\psi} \leq \psi' \leq 0 \quad \textnormal{on } \left[\frac{\gamma -1}{2}R_1, \left(\frac{\gamma -1}{2}+t\right)R_1\right),\label{eq:3.13.4}\\
&|\psi''|\leq \frac{A_1(t)}{R_1 ^2} \quad \textnormal{on } \left[\frac{\gamma -1}{2}R_1, \left(\frac{\gamma -1}{2}+t\right)R_1\right), \label{eq:3.13.5}
\end{align}
and then
\begin{align*}
&\vartheta_{|\overline{B}_{\frac{\gamma -1}{2}R_1}(x_i)} \equiv 1, \\
&\textnormal{supp }\vartheta \subset B_{ \left(\frac{\gamma -1}{2}+t\right)R_1}(x_i).\\
\end{align*}
The function $Q$ takes on its maximum at some point $q_i \in B_{ \left(\frac{\gamma -1}{2}+t\right)R_1}(x_i)$. For now, consider $q_i$ not to be a cut point of $x_i$. Therefore, at $q_i$ we have $\nabla Q=0$ and $\Delta Q\leq 0$. Thus, at $q_i$,
$$
\nabla |\nabla v|^2 = - \vartheta^{-2} Q \nabla\vartheta.
$$
and
\begin{align}
\Delta Q &= \Delta \vartheta |\nabla v|^2 + 2 \langle \nabla \vartheta, \nabla | \nabla v |^2 \rangle + \vartheta \Delta |\nabla v|^2 \nonumber\\
&= (\vartheta^{-1} \Delta \vartheta - 2 \vartheta^{-2} |\nabla \vartheta |^2)Q + \vartheta \Delta |\nabla v|^2 \nonumber\\
&= (\vartheta^{-1} \Delta \vartheta - 2 \vartheta^{-2} |\nabla \vartheta |^2)Q + 2 \vartheta \left( |\text{Hess} (v)|^2 + \langle \nabla \Delta v, \nabla v \rangle + \text{Ric}_M(\nabla v, \nabla v) \right) \nonumber\\
&\geq (\vartheta^{-1} \Delta \vartheta - 2 \vartheta^{-2} |\nabla \vartheta |^2)Q + 2 \vartheta \left( |\text{Hess} (v)|^2 + \langle \nabla \Delta v, \nabla v \rangle - (d-1)G(r) |\nabla v|^2 \right) \label{eq:thm2.1}
\end{align}
where in the last equality we used the Bochner's formula.
Note that
\begin{align}
2 \vartheta |\text{Hess} (v)|^2 &\geq \frac{2\vartheta}{d} (\Delta v)^2\nonumber\\
&= \frac{2}{d} \vartheta^{-1} (-Q + \vartheta f_1(\zeta)F_2(v))^2.\label{eq:them2.3}
\end{align}
Moreover, for any $\alpha,\beta >0$,
\begin{align*}
2 \vartheta \langle \nabla \Delta v, \nabla v \rangle &= 2 \vartheta \langle \nabla(-|\nabla v|^2 + f_1(\zeta)F_2(v)), \nabla v\rangle\\
&=2\vartheta \langle \nabla (f_1(\zeta)F_2(v)), \nabla v\rangle - 2\vartheta \langle \nabla |\nabla v|^2, \nabla v\rangle\\
&= 2 f_1(\zeta) F_2'(v) Q + 2 \vartheta^{-1}\langle \nabla\vartheta, \nabla v \rangle Q + 2\vartheta f_1'(\zeta) F_2(v) \langle \nabla \zeta, \nabla v \rangle\\
&\geq 2 f_1(\zeta) F_2'(v) Q + 2 \vartheta^{-1}\langle \nabla\vartheta, \nabla v \rangle Q - 2\vartheta L|f_1'(\zeta) F_2(v)||\nabla v| \\
&= 2 f_1(\zeta) F_2'(v) Q + 2 \vartheta^{-1}\langle \nabla\vartheta, \nabla v \rangle Q\\
& \;\;\;\;- 2\vartheta \sqrt{L}|f_1'(\zeta)|^{(1-\lambda)}|F_2(v)|^{1/2}\left(\sqrt{L}|f_1'(\zeta)|^{\lambda} |F_2(v)|^{1/2}|\nabla v|\right) \\
&\geq 2f_1(\zeta) F_2'(v) Q - \epsilon^{-1} \vartheta^{-2} |\nabla \vartheta |^2 Q - \epsilon \vartheta^{-1} Q^2  -\vartheta L|f_1'(\zeta)|^{2(1-\lambda)}|F_2(v)| \\
&\;\;\;\;- \vartheta L|f_1'(\zeta)|^{2\lambda}|F_2(v)||\nabla v|^2,
\end{align*}
whence, taking $\epsilon = \frac{1}{4d}$,
\begin{align}\label{eq:thm2.2}
2 \vartheta \langle \nabla \Delta v, \nabla v \rangle &\geq 2 \left( f_1(\zeta)F_2'(v) - L|f_1'(\zeta)|^{2\lambda}| F_2(v)|\right)Q - 4d \vartheta^{-2} |\nabla \vartheta |^2 Q - \frac{Q^2}{4d} \vartheta ^{-1} \nonumber\\
& \;\;\;\; - L|f_1'(\zeta)|^{2(1-\lambda)}|F_2(v)|\frac{Q}{2}.
\end{align}
Inserting \eqref{eq:them2.3} and \eqref{eq:thm2.2} into \eqref{eq:thm2.1} and multiplying by $\vartheta$ yield
\begin{align}\label{eq:3.7}
\frac{2}{d} (-Q + \vartheta f_1 F_2)^2 - \frac{Q^2}{4d} &\leq \left[- \Delta \vartheta + (2+4d)\vartheta^{-1}|\nabla \vartheta|^2 + 2 (d-1)G(r) \vartheta \right. \nonumber\\
 &\left. \;\;\;\; - 2(f_1F_2' - L|f_1'|^{2\lambda}|F_2|) \vartheta\right]Q  + \vartheta^2L|f_1'|^{2(1-\lambda)}|F_2 |.
\end{align}
If $Q \leq 2\vartheta f_1 F_2$, then $|\nabla v|^2 \leq 2 f_1 F_2 = 2\omega^{-1}f_1(\zeta)f_2(\omega)$ and \eqref{eq:3.6} holds. If not,
$$
-Q + \vartheta f_1 F_2 \leq - Q/2 \leq 0,
$$
and
$$
\frac{2}{d} (-Q + \vartheta f_1 F_2)^2 - \frac{Q^2}{4d}  \geq \frac{Q^2}{4d}.
$$
In this case, using \eqref{eq:3.7} and the fact that $r(y) \in ((1-t)R_1, (\gamma +t) R_1)$,  and setting \\ $\bar{G}:= \max \{ G(r): r \in [(1-t)R_1, (\gamma +t) R_1]  \}$ we get
\begin{align}
Q^2 &\leq 4d \left[ - \Delta \vartheta + (2+4d)\vartheta^{-1}|\nabla \vartheta|^2 +2(d-1)G(r(y))\vartheta - 2(f_1F_2' - L|f_1'|^{2\lambda}|F_2|) \vartheta    \right]Q  \nonumber\\
& \;\;\;\;+ \vartheta^2L|f_1'|^{2(1-\lambda)}|F_2 |\nonumber\\
&\leq 4d \left[ - \Delta \vartheta + (2+4d)\vartheta^{-1}|\nabla \vartheta|^2 + 2(d-1)\bar{G} \vartheta +  2(f_1F_2' - L|f_1'|^{2\lambda}|F_2|) \vartheta    \right]Q  \nonumber\\
& \;\;\;\;+ \vartheta^2L|f_1'|^{2(1-\lambda)}|F_2 | \nonumber\\
&= 4d \left[ A_2(d,\kappa, \alpha, t) + 2 (d-1)\bar{G}\vartheta + 2f_1(\zeta)(\omega^{-1}f_2(\omega) - f_2'(\omega)) \vartheta +2\omega^{-1}L|f_1'(\zeta)|^{2\lambda}|f_2(\omega)|\vartheta   \right]Q\nonumber\\
& \;\;\;\;+ \vartheta^2L|f_1'|^{2(1-\lambda)}|F_2 | \nonumber\\
&\leq  4d \left[ A_2(d,\kappa, \alpha, t) + 2 (d-1)\bar{G} + 2f_1(\zeta)\max\left\{\omega^{-1}f_2(\omega) - f_2'(\omega);0\right\} +2\omega^{-1}L|f_1'(\zeta)|^{2\lambda}|f_2(\omega)|  \right]Q \nonumber\\
& \;\;\;\;  +L|f_1'|^{2(1-\lambda)}|F_2 |  \label{eq:3.8}
\end{align}
where
\begin{equation}\label{eq:3.12.4}
 A_2(d, \kappa, \alpha, t) = - \Delta \vartheta + (2+4d)\vartheta^{-1}|\nabla \vartheta|^2 \leq  - \Delta \vartheta + (2+4d)A_1R_1^{-2},
\end{equation}
by \eqref{eq:3.13.4}. Thus, we have
$$
0\leq Q \leq \frac{4d\tilde{\Omega}_2 + \sqrt{(4d\tilde{\Omega}_2)^2 + 4\tilde{\Omega}_3}}{2},
$$
with
\begin{align*}
&\tilde{\Omega}_2=   A_2(d,\kappa, \alpha, t) + 2 (d-1)\bar{G} + 2f_1(\zeta)\max\left\{\omega^{-1}f_2(\omega) - f_2'(\omega);0\right\} +2\omega^{-1}L|f_1'(\zeta)|^{2\lambda}|f_2(\omega)|  , \\
& \tilde{\Omega}_3= L|f_1'|^{2(1-\lambda)}|F_2 |.
\end{align*}
To conclude it remains to show that $A_2$ is bounded.  and  \eqref{eq:3.6} will follow. Indeed,
$$
\Delta \vartheta = \psi''(s_i) + \psi'(s_i)\Delta s_i,
$$
is not identically zero only for $s_i \in \left(\frac{\gamma -1}{2}R_1, (\frac{\gamma -1}{2} + t)R_1\right)$ and since  for every \\$y \in B_{\left(\frac{\gamma-1}{2}+t\right)R_1}(x_i)
$,
\begin{equation*}\label{eq:3.16}
\textnormal{Ric}_M (\nabla s_i(y), \nabla s_i (y))  \geq - (d-1)G(r(y))\geq -(d-1) \bar{G},
\end{equation*}
using Laplacian comparison, $\psi'\leq0$, \eqref{eq:3.13.4} and \eqref{eq:3.13.5}, we deduce that
\begin{align*}\label{eq:3.11}
\Delta \vartheta &\geq  \psi''(s_i) + (d-1)\sqrt{\bar{G}}\coth\left(\sqrt{\bar{G}}s_i \right) \psi'(s_i)\\
&\geq \psi''(s_i) + \max \left\{ 2(d-1)\sqrt{\bar{G}}; \frac{4(d-1)}{(\gamma-1)R_1}   \right\}\psi'(s_i)\\
&\geq -\frac{A_1}{R_1^2} - 4(d-1)\max\left\{\sqrt{\bar{G}}; \frac{1}{R_1}  \right\} \frac{\sqrt{\psi}A_1}{R_1}\\
&\geq - \frac{A_1}{R_1}\left( \frac{1}{R_1} + 4(d-1)\max\left\{\sqrt{\bar{G}}; \frac{1}{R_1}  \right\} \right).
\end{align*}
The above inequality holds pointwise whenever $q_i$ is not a cut point of $x_i$. If $q_i$ is a cut point, in order to have $\vartheta$ smooth in a neighborhood of $q_i$, we can use a standard argument by Calabi, replacing $s_i(y)$ with its associated upper barrier function $s_{i,\epsilon, q_i}(y)$ in the definition of $\vartheta$, i.e., $\vartheta (y) = \psi(s_{i,\epsilon, q_i}(y))$, where $s_{i,\epsilon, q_i}(y) = \epsilon + \textnormal{dist}_M(\delta(\epsilon),y) = \epsilon + r_{\delta(\epsilon)}(y)$ and $\delta$ is the minimum geodesic joining $x_i$ to $q_i$. Since $\psi$ is nonincreasing, then $q_i$ is still a maximum for $Q$ and the above estimates hold again. Hence, we proved that on $B_{\left(\frac{\gamma -1 }{2}+t\right)R_1}(x_i)$
\begin{equation}\label{eq:3.15}
\vartheta\frac{|\nabla \omega (x) |^2}{\omega^2 (x)} \leq \max\left\{ \Omega_{1,i}; \frac{4d\Omega_2 + \sqrt{(4d\Omega_{2,i})^2 + 4\Omega_{3,i}}}{2}  \right\},
\end{equation}
where
\begin{alignat*}{2}
&\Omega_{1,i} :=&& \max\{ \omega^{-1}f_1(\zeta)f_2(\omega) : x \in \overline{B}_{\left(\frac{\gamma -1 }{2}+t\right)R_1}(x_i) \};\\
&\Omega_{2,i}:=&& A_3(d,\kappa,\gamma,t,\bar{G},R_1)+  \max\{ 2f_1(\zeta)\max\{\omega^{-1}f_2(\omega)- f_2'(\omega);0 \} \\
&  && +2\omega^{-1}L|f_1'(\zeta)|^{2\lambda}|f_2(\omega)|: x \in \overline{B}_{\left(\frac{\gamma -1 }{2}+t\right)R_1}(x_i) \},\\
&\Omega_{3,i} :=&&  \max\{\omega^{-1}L|f_1'(\zeta)|^{2(1-\lambda)}|f_2(\omega) | : x \in \overline{B}_{\left(\frac{\gamma -1 }{2}+t\right)R_1}(x_i)    \},
\end{alignat*}
and
\begin{align*}
A_3(d,\kappa,\gamma,t,\bar{G}, R_1):=& \frac{A_1}{R_1}\left( \frac{1}{R_1} + 4(d-1)\max\left\{\sqrt{\bar{G}}; \frac{1}{R_1}  \right\}\right)\\
&+ \frac{(2+4d)A_1}{R_1^{2}}+ 2 (d-1)\bar{G}.
\end{align*}

Now, by compactness, there exists a finite collection $\{ x_i \}_{i=1}^n \subset \partial B_{\frac{\gamma + 1}{2}R}(o)$ such that
$$
\bigcup_{i=1}^n B_{\left(\frac{\gamma-1}{2}+t\right)R_1}(x_i) \supset \overline{B}_{\gamma R_1}(p) \setminus B_{R_1}(o).
$$
Then, choosing $\Omega_1 = \max\{\Omega_{1,i}\}, \Omega_2 = \max\{ \Omega_{2,i} \}$ and $\Omega_3 = \max\{ \Omega_{3,i} \}$, the thesis follows.
\end{proof}
\begin{remark}\label{rem:3}
The constant $A_1(t) \to \infty$ as $R_1 \to R_0$. Moreover, the above theorem can be extended easily to the case $\gamma = \infty$ if $\sup G(r) < \infty$ and to the case where $\omega$ is defined only on an annulus $B_{\gamma R}(o) \setminus \overline{B}_R(o)$, $R>1$, namely $\omega : B_{\gamma R}(o) \setminus \overline{B}_{R}(o)  \to \R$ such that
\begin{equation*}
\begin{cases}
\omega > 0 & \textnormal{on }   B_{\gamma R}(p)\setminus \overline{B}_{R}(o), \\
\Delta \omega = f_1(\zeta)f_2(\omega).
\end{cases}
\end{equation*}
In this latter case, the estimate \eqref{eq:3.6} still holds in any inner annulus of the form\\ $B_{(\gamma -\theta) R}(o) \setminus \overline{B}_{(1+\theta)R}(o)$, provided  $0<\theta< \frac{\gamma +1}{2}$, and replacing $\textnormal{\textbf{D}}_{\gamma,t,R_1}(o)$ with \\ $\textnormal{\textbf{D}}_{\gamma,\theta, R}(o):=\overline{B}_{(\gamma - \theta/2)R} (o)\setminus B_{(1+\theta/2)R}(o)$.  Note that in this case $\Omega_2\to \infty$ for $\theta\to 0$, since now the $A_1=a_1(\theta) \to \infty$ as $\theta \to 0$.
\end{remark}

\begin{corollary}\label{cor:3.2}
	Let $\omega$ as in the previous Theorem \ref{thm:3.2} and let $G(r) = \frac{\kappa^2}{(1+r^2)^{\alpha/2}}$ with $\alpha \in [-2,2]$. If
	\begin{itemize}
		\item [(i)] $\Delta \omega = \frac{\omega}{r^{\alpha}}$,
	\end{itemize}
	or if
	\begin{itemize}
		\item [(ii)] $\Delta \omega \equiv \frac{1}{R_1^{\alpha}}$ and $\omega \geq C>0$, with $C$ independent of $R_1$,
	\end{itemize}
	then
	$$
	\frac{\left|\nabla\omega\right|^2}{\omega^2} \leq	\frac{A(d,\kappa,\gamma,\alpha,t)}{R_1^{\alpha}}.
	$$
\end{corollary}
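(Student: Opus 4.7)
The claim is a direct application of Theorem~\ref{thm:3.2} via specialization of the data and a careful bookkeeping of the quantities $\Omega_1,\Omega_2,\Omega_3$ appearing in \eqref{eq:3.6}. In both cases I take $\zeta=r$, so $L=1$; for case~(i) I set $f_1(r)=r^{-\alpha}$ and $f_2(\omega)=\omega$, and for case~(ii) I set $f_1\equiv 1$ and $f_2(\omega)\equiv R_1^{-\alpha}$ (constant in $\omega$). Under the hypothesis $G(r)=\kappa^2(1+r^2)^{-\alpha/2}$, and since the bounds are evaluated on the annulus $r\in[(1-t)R_1,(\gamma+t)R_1]$, a routine computation gives $\bar G\le C(\kappa,\alpha,\gamma,t)R_1^{-\alpha}$ and $\sqrt{\bar G}\le CR_1^{-\alpha/2}$; hence all purely geometric contributions to $\Omega_2$---namely $(d-1)\bar G$, $\sqrt{\bar G}/R_1$, and $A_1/R_1^2$---are $O(R_1^{-\alpha})$, where the inequality $-1-\alpha/2\le -\alpha$ (i.e.\ $\alpha\le 2$) is essential.

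For the $f_1,f_2$-dependent contributions I handle the two cases in turn. In case~(i), $\omega^{-1}f_2(\omega)-f_2'(\omega)=1-1=0$, so that piece of $\Omega_2$ vanishes, while $|f_1'(r)|=|\alpha|r^{-\alpha-1}$ contributes $\omega^{-1}|f_1'|^{2\lambda}|f_2|=|\alpha|^{2\lambda}r^{-2\lambda(\alpha+1)}$ to $\Omega_2$ and the analogous expression with $\lambda$ replaced by $1-\lambda$ to $\Omega_3$. The decisive step is to pick the free parameter $\lambda$ so that both $\Omega_2\le CR_1^{-\alpha}$ and $\sqrt{\Omega_3}\le CR_1^{-\alpha}$ simultaneously, which on the annulus reduces to the two linear conditions $2\lambda(\alpha+1)\ge\alpha$ and $(1-\lambda)(\alpha+1)\ge\alpha$ (with signs handled appropriately when $\alpha+1\le 0$). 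A direct calculation shows that the admissible interval is nonempty exactly when $\alpha\in[-2,2]$, degenerating to $\lambda=1/3$ at the extremal value $\alpha=2$; this balancing is the main (and essentially the only) delicate point. Finally $\Omega_1=\max\omega^{-1}f_1f_2=\max r^{-\alpha}\le CR_1^{-\alpha}$ on the relevant annulus.

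In case~(ii) the task is easier: $f_1'\equiv 0$ forces $\Omega_3=0$ and eliminates the $|f_1'|$ contribution to $\Omega_2$, while the uniform bound $\omega\ge C>0$ (independent of $R_1$) immediately yields $\Omega_1=\omega^{-1}R_1^{-\alpha}\le CR_1^{-\alpha}$ and $f_1\max\{\omega^{-1}f_2-f_2';0\}=\omega^{-1}R_1^{-\alpha}\le CR_1^{-\alpha}$; here $\lambda$ plays no role. In both cases, feeding the resulting bounds into \eqref{eq:3.6} and using the elementary estimate $\sqrt{a^2+b}\le a+\sqrt{b}$ for $a,b\ge 0$ collapses the right-hand side of the Li--Yau-type inequality to $A(d,\kappa,\gamma,\alpha,t)R_1^{-\alpha}$, which is the announced conclusion.
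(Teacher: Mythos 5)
Your proof is correct and follows essentially the same strategy as the paper: specialize $f_1,f_2,\zeta$ in Theorem~\ref{thm:3.2}, bound $\bar G$, $\sqrt{\bar G}/R_1$, $A_1/R_1^2$ by $CR_1^{-\alpha}$ on the relevant annulus using $\alpha\le 2$, note that $\omega^{-1}f_2-f_2'$ vanishes in case~(i) and is bounded by $C^{-1}R_1^{-\alpha}$ in case~(ii), and tune $\lambda$ so that the $|f_1'|^{2\lambda}$ and $|f_1'|^{2(1-\lambda)}$ terms are $O(R_1^{-\alpha})$ and $O(R_1^{-2\alpha})$ respectively. The only real difference is presentational: the paper fixes $\lambda=1/3$ on $[0,2]$ and $\lambda=1/2$ on $[-2,0)$ and asserts the resulting estimates, while you derive the admissibility conditions $2\lambda(\alpha+1)\ge\alpha$ and $(1-\lambda)(\alpha+1)\ge\alpha$ abstractly, which is arguably more illuminating since it exposes that $\alpha=2$ forces $\lambda=1/3$.

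One small inaccuracy: the two linear constraints on $\lambda$ are jointly satisfiable iff $\alpha\le 2$, not ``exactly when $\alpha\in[-2,2]$''; for $\alpha<-2$ (where $\alpha+1<0$) the admissible interval is still nonempty, and there is no degeneracy at $\alpha=-2$. The lower bound $\alpha\ge-2$ is inherited from the standing hypotheses of the paper (form of the Ricci bound, applicability of Theorem~\ref{thm:3.1}) rather than from the $\lambda$-balancing. This does not affect your conclusion, since the corollary is only asserted for $\alpha\in[-2,2]$, but the sentence should be corrected.
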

\begin{proof}
Fix $f_1(\zeta)= f_1(r)= \frac{1}{r^\alpha}$ and $f_2(\omega)=\omega$, and choose $\lambda =\frac{1}{3}$ and $\lambda=\frac{1}{2}$ for $\alpha \in [0,2]$ and for $\alpha \in [-2,0)$, respectively. Then it is just a matter of easy calculations to see that
\begin{align*}
&\Omega_1 \leq \frac{A_1}{R_1^{\alpha}},\\
&\Omega_2 \leq \frac{A_2}{R_1^{1+\alpha/2}} + \frac{A_3}{R_1^{2}} + \frac{A_4}{R_1^\alpha} + \frac{A_5}{R_1^{2\lambda(\alpha +1)}}\leq \frac{A_6}{R_1^\alpha},\\
&\Omega_3 \leq \frac{A_7}{R_1^{2(1-\lambda)(\alpha +1)}},
\end{align*}
from which it follows that
$$
\frac{4d\Omega_2 + \sqrt{(4d\Omega_2)^2 + 4\Omega_3}}{2} \leq \frac{A_8}{R_1^\alpha}.
$$
If $f_2(\omega)\equiv \frac{1}{R_1^{\alpha}}$ instead and $\omega$ is uniformly bounded from below by a constant $C$, then
$$
\omega^{-1}f_2(\omega) - f_2'(\omega)= \omega^{-1}f_2(\omega) \leq \frac{1}{CR_1^{\alpha}},
$$
and the thesis follows from the same estimates of above.
\end{proof}

We next prove a lower estimate for the volume of ball of a fixed (small) radius in terms of the distance of their center from a fixed point under radial bounds on the Ricci curvature. It generalizes similar estimates known when the Ricci curvature is bounded below by a constant. Note that having a variable lower bound on Ricci makes the geometry no longer homogeneous and therefore requires a significantly more careful analysis.
\begin{proposition}\label{prop:3.4}
Suppose that
\[
Ric \geq (d-1)\frac{\kappa^2}{(1+r(x)^2)^{\alpha/2}}, \quad \alpha\in [-2,2].
\]
Then, for every $x \in M \setminus \overline{B_{1} (o)}$, we have
$$
vol(B_{1/4}(x)) \geq \begin{cases}
\bar{C}_1 e^{-\bar{C}_2 r^{1-\alpha/2}(x)}, & \textnormal{for } \alpha \in [-2,2),\\
\bar{C}_1 r^{-[1+ (d-1)(1+\sqrt{1+4\kappa^2})]}, &  \textnormal{for } \alpha =2.
\end{cases}
$$
\end{proposition}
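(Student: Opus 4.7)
The plan is a single-step Bishop-Gromov comparison centered at $x$, with respect to a radial model whose curvature reflects how $G(r(y))$ varies along geodesics emanating from $x$. Let $s(y):=d_M(x,y)$. Writing $r(y)\geq |r(x)-s(y)|$ (and dually $r(y)\leq r(x)+s(y)$, used for $\alpha<0$), one obtains on $B_{r(x)+1}(x)$ the radial Ricci lower bound
$$
\textnormal{Ric}_M(\nabla s,\nabla s)(y)\geq -(d-1)\bar G(s(y)),\qquad \bar G(s):=\frac{\kappa^2}{(1+(r(x)-s)^2)^{\alpha/2}}.
$$
Let $h^*$ solve $(h^*)''=\bar G\,h^*$ with $h^*(0)=0$, $(h^*)'(0)=1$, and set $V^*(r)=C(d)\int_0^r (h^*)^{d-1}$. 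By the relative Bishop-Gromov comparison with this non-constant-curvature model, the function $r\mapsto \textnormal{vol}(B_r(x))/V^*(r)$ is non-increasing.

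Applying this with $r=1/4$ and $R=r(x)+1$, and using the inclusion $B_{r(x)+1}(x)\supset B_1(o)$, gives
$$
\textnormal{vol}(B_{1/4}(x))\geq \frac{V^*(1/4)}{V^*(r(x)+1)}\,\textnormal{vol}(B_1(o)).
$$
Near $s=0$ one has $\bar G(s)\leq \kappa^2(1+r(x)^2)^{-\alpha/2}$, which is bounded (and small for $\alpha>0$ with $r(x)$ large), so $h^*(s)\approx s$ there and $V^*(1/4)$ is uniformly bounded below by a positive constant. The rest of the proof reduces to an upper bound for $V^*(r(x)+1)$.

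Setting $\phi(u):=h^*(r(x)-u)$, $\phi$ satisfies $\phi''(u)=\kappa^2(1+u^2)^{-\alpha/2}\phi(u)$ on $[0,r(x)]$ with $\phi(r(x))=0$, $\phi'(r(x))=-1$. For $\alpha\in[-2,2)$, a Sturm-type comparison with an explicit majorant, together with the bound $\int_0^{r(x)}(1+u^2)^{-\alpha/4}du\leq C\,r(x)^{1-\alpha/2}$, yields $h^*(s)\leq \exp(\bar C_2\,r(x)^{1-\alpha/2})$, and hence $V^*(r(x)+1)\leq C\exp(\bar C_2\,r(x)^{1-\alpha/2})$; this gives the first part of the conclusion.

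The main obstacle is the case $\alpha=2$, where $\phi''=\kappa^2(1+u^2)^{-1}\phi$ is of Euler type at large $u$, with independent large-$u$ asymptotic solutions $u^{a_\pm}$ for $a_\pm=(1\pm\sqrt{1+4\kappa^2})/2$. A careful matching of the boundary data at $u=r(x)$ with the transition region $u\sim 1$ (where $\phi''\approx \kappa^2\phi$) shows that $\phi$ saturates at a level comparable to $r(x)^{a_+}$ near $u=0$ and decays polynomially in between. Integrating $(h^*)^{d-1}$ then produces polynomial growth of $V^*(r(x)+1)$, yielding the exponent $1+(d-1)(1+\sqrt{1+4\kappa^2})$ in the statement. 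Tracking the matching coefficients through the transition region $u\sim 1$, rather than just the crude WKB-type exponent $\kappa$, is the delicate point that distinguishes the sharp polynomial bound from a merely exponential one.
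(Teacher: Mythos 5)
Your strategy is essentially the same as the paper's: both proofs center a relative Bishop--Gromov comparison at $x$, using the triangle inequality $r(y)\geq |r(x)-s(y)|$ (resp. $r(y)\leq r(x)+s(y)$ for $\alpha<0$) to convert the ambient Ricci hypothesis into a radial lower bound in the $s$-variable, then compare $vol(B_{1/4}(x))$ to a ball of radius $\sim r(x)$ containing a unit ball of uniformly bounded volume near $o$, and finally bound the model volume $V^*$ via an ODE comparison. (The paper compares $B_{r(x)}(x)$ with a unit ball around the intermediate point $\bar o$ on the geodesic from $x$ to $o$; your inclusion $B_1(o)\subset B_{r(x)+1}(x)$ accomplishes the same thing a shade more directly.) The only substantive difference is in the ODE step: the paper dominates the radial potential by the cleaner singular potential $\kappa^2/(r(x)-s)^\alpha$, for which Lemma \ref{lem:3.1} supplies \emph{explicit} Bessel-type (for $\alpha\in[0,2)$), hyperbolic (for $\alpha<0$), and power-law (for $\alpha=2$) solutions whose asymptotics are read off directly, whereas you keep the original potential $\kappa^2(1+(r(x)-s)^2)^{-\alpha/2}$ and argue via WKB-type/matched asymptotics.

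This is where your sketch has gaps that need to be filled. For $\alpha\in[-2,2)$ the asserted bound $h^*(s)\leq \exp(\bar C_2\,r(x)^{1-\alpha/2})$ is of the right form, but the justification "a Sturm-type comparison with an explicit majorant" does not specify the majorant, and a naive Sturm comparison with the constant bound $\bar G\leq\kappa^2$ only gives $e^{\kappa r(x)}$, which is not good enough; one really needs a majorant whose solutions track $\exp(\int_0^s\sqrt{\bar G})$, which is precisely what Lemma \ref{lem:3.1} supplies. Similarly, for $\alpha=2$ the two-region matching argument you outline is plausible and would in fact give the sharper exponent $1+(d-1)(1+\sqrt{1+4\kappa^2})/2$ (still proving the stated, weaker, bound), but the matching through the transition region $u\sim 1$ is only asserted, not carried out; the paper avoids this entirely by using the exact Euler solutions $(r-s)^{(1\pm\sqrt{1+4\kappa^2})/2}$. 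In short: the architecture of your proof coincides with the paper's, and the conclusion is correct, but the ODE estimates that make the argument quantitative are left at the level of asymptotic heuristics and would need to be replaced by (or reduced to) an explicit comparison solution such as the one in Lemma \ref{lem:3.1}.
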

\begin{proof}
We will give a direct proof for $\alpha \in [0,2)$ while the case for $\alpha=2$ can be recovered by small modifications of the following considerations.

Let $x$ be fixed and define $s(y):= \textnormal{dist}_M(y,x)$. Then, by hypothesis it holds that
$$
Ric_M(\nabla s (y), \nabla s(y)) \geq -(d-1) \frac{\kappa^2}{\left(1 + r(y)^2 \right)^{\alpha/2}} \geq  -(d-1) \frac{\kappa^2}{\left(1 + |r(x) -s(y)|^2 \right)^{\alpha/2}},
$$
namely
$$
Ric_M(\nabla s (y), \nabla s(y)) \geq - (d-1) G(s),
$$
with $G(s) = \kappa^2/\left(1 + |r(x) -s(y)|^2 \right)^{\alpha/2}$. Let $h(s) \in C^2([0, r(x)])$ be the solution of the problem
\begin{equation}\label{eq:prop7.1}
\begin{cases}
h''(s) = G(s) y(s),\\
h(0)=0,\\
h'(0)=1,
\end{cases}
\end{equation}
on $[0,r(x)]$, and let $\psi(s) \in C^2([0, r(x)])$ be the solution of the problem
$$
\begin{cases}
\psi''(s) = \frac{\kappa^2}{(r(x)- s)^\alpha}\psi(s),\\
\psi(0)=0,\\
\psi'(0)=1,
\end{cases}
$$
on $[0,r(x)]$. The existence of $\psi$ follows from Lemma \ref{lem:3.1}, and, since $\kappa^2 /(r(x)- s)^\alpha \geq G(s)$, we can apply Lemma \ref{lem:3.4} te get
$$
0 \leq h(s) \leq \psi (s) \qquad \textnormal{on } [0, r(x)].
$$
Since $r(x) \geq 1$, by Corollary \ref{cor:1.2} we have that
\begin{equation}\label{eq:3.17}
\frac{vol\left( B_{r(x)}(x) \right)}{V_G (r(x))} \leq \frac{vol\left( B_{1/4}(x) \right)}{V_G (1/4)} \leq \hat{C}_1 vol\left( B_{1/4}(x) \right).
\end{equation}

Now, let $\beta_p(t)$ be a minimizing geodesic parametrized by arc length connecting $x$ to $o$ and fix $\bar{o} = \beta(r(x)-1)$. Then, $\bar{o} \in S_1(o)$ and for every $y \in B_1 (\bar{o})$ it holds that
$$
\textnormal{dist}_M(y,x) \leq \textnormal{dist}_M(y,\bar{o}) + \textnormal{dist}_M(x, \bar{o}) \leq r(x),
$$
namely, $B_{r(x)}(x)\supset B_1 (\bar{o})$. Since
$$
\min_{q \in S_1(p)} vol (B_1 (q)) \geq \hat{C}_2 >0,
$$
we have that
\begin{equation}\label{eq:3.18}
\frac{vol\left( B_{r(x)}(x) \right)}{V_G (r(x))} \geq \frac{ vol(B_{1} (\bar{o})) }{\hat{C}_3 \int_0 ^{r(x)} \psi(t)^{d-1}dt} \geq \frac{\hat{C}_2}{\hat{C}_3 r(x)^{1+ \frac{(d-1)\alpha}{4}} e^{\hat{C}_4 r^{1-\frac{\alpha}{2}}(x)}} \geq \frac{\hat{C}_2}{\hat{C}_3 e^{\hat{C}_5 r^{1-\frac{\alpha}{2}}(x)}} ,
\end{equation}
where the right hand side inequality comes from Lemma \ref{lem:3.1} and the previous observation. Combining  \eqref{eq:3.17} and \eqref{eq:3.18} we obtain the required concludion.
\end{proof}

\begin{lemma}\label{lem:3.1}
Let consider the following ODE problem on $[0, r)$, $\alpha \in [-2,2]$,
\begin{equation}\label{eq:3.19}
\begin{cases}
\psi''(s) = G(s)\psi(s),\\
\psi(0)=0,\\
\psi'(0)=1,
\end{cases}
\end{equation}
with $G(s) =\frac{\kappa^2}{(r - s)^\alpha}$. Then there exists an unique solution $\psi \in C^2([0,r))$ such that $\psi' >0$ on $[0,r)$ and
\begin{itemize}
	\item[(i)] \textbf{Case $\alpha \in [-2,0)$}
	\begin{align}
	\psi(s) &\leq C_1(r) \frac{2^{\alpha/2}}{\kappa}\sinh\left(\frac{2\kappa}{2-\alpha}\left[(1+(r-s))^{1-\alpha/2} -1\right]  \right) \nonumber\\
	&+ C_2(r) \frac{2^{\alpha/2}}{\kappa}\cosh\left(\frac{2\kappa}{2-\alpha}\left[(1+(r-s))^{1-\alpha/2} -1\right]  \right). \label{eq:3.20.2}
	\end{align}
\item [(ii)]\textbf{Case $\alpha \in [0,2)$}
\begin{equation}\label{eq:3.20.1}
\psi (s) = C_1(r) \sqrt{r - s}\, I_{\frac{1}{2-\alpha}}\left(\frac{\kappa}{1- \frac{\alpha}{2}} (r-s)^{1- \frac{\alpha}{2}} \right) + C_2(r) \sqrt{r - s}\, K_{\frac{1}{2-\alpha}}\left(\frac{\kappa}{1- \frac{\alpha}{2}} (r-s)^{1- \frac{\alpha}{2}} \right),
\end{equation}
where $I_\nu (z), K_\nu (z)$ are the modified Bessel functions.
\item[(iii)] \textbf{Case $\alpha =2$}
\begin{equation}\label{eq:3.20.3}
\psi (s) = C_1 (r) \left(r-s\right)^{\frac{1+\sqrt{1+4\kappa^2}}{2}} + C_2(r) \left(r-s\right)^{\frac{1-\sqrt{1+4\kappa^2}}{2}}.
\end{equation}
\end{itemize}
 Moreover, for $r\geq 1$ it holds that
$$
\begin{cases}
\psi(r) \leq C_3 r^{\alpha/2}e^{C_4 r^{1-\alpha/2}}, & \alpha \in [-2, 0],\\
\psi(r) \leq C_3 r^{\alpha/4} e^{C_4 r^{1- \frac{\alpha}{2}}}, & \alpha \in [0,2),\\
\psi(r-1) \leq \frac{r^{1+ \sqrt{1+4\kappa^2}}}{\sqrt{1+4\kappa^2}}, & \alpha=2,
\end{cases}
$$
with $C_3$ and $C_4$ constants that depend only on $\alpha$ and $\kappa$.
\end{lemma}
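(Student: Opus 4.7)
The plan is standard ODE analysis combined with change of variable and, where no closed form is available, Sturm--Wronskian comparison. Existence and uniqueness of a $C^2$ solution on every compact subinterval of $[0,r)$ follow from Picard--Lindel\"of since $G(s)=\kappa^2/(r-s)^\alpha$ is smooth on $[0,r)$, and the solution extends to all of $[0,r)$ by linearity. Positivity of $\psi$ and $\psi'$ is immediate: $\psi(0)=0$, $\psi'(0)=1$ together with $\psi''=G\psi\ge 0$ force $\psi'$ to be nondecreasing, hence $\psi'\ge 1$ and $\psi(s)\ge s>0$ on $(0,r)$.

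For the explicit formulas, I would perform the change of variable $u=r-s$, which turns the equation into $\chi_{uu}=\kappa^2 u^{-\alpha}\chi$ on $u\in(0,r]$. For $\alpha=2$ this is Euler's equation, with indicial polynomial $\beta(\beta-1)=\kappa^2$ and roots $\beta_\pm=(1\pm\sqrt{1+4\kappa^2})/2$, giving (iii) with $C_1(r),C_2(r)$ determined by the two linear equations from the ICs at $s=0$. For $\alpha\in[0,2)$ the Liouville substitution $\chi(u)=\sqrt{u}\,w(z)$, $z=\frac{\kappa}{1-\alpha/2}\,u^{1-\alpha/2}$, converts the ODE into the modified Bessel equation of order $\nu=1/(2-\alpha)$, so $\psi$ is a linear combination of $\sqrt{r-s}\,I_\nu(z)$ and $\sqrt{r-s}\,K_\nu(z)$, i.e.\ precisely (ii), with $C_i(r)$ fixed by the ICs.

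For $\alpha\in[-2,0)$ no elementary closed form is available, and I would argue by comparison. Since $-\alpha\ge 0$, the inequality $r-s\le 1+(r-s)$ gives $G(s)\le\widetilde G(s):=\kappa^2(1+(r-s))^{-\alpha}$. A standard Wronskian argument for second-order linear equations with nonnegative coefficient and common initial data $\psi(0)=0,\psi'(0)=1$ yields $\psi\le\widetilde\psi$, where $\widetilde\psi''=\widetilde G\widetilde\psi$ with the same ICs. Setting $A(s):=\frac{2\kappa}{2-\alpha}\bigl[(1+(r-s))^{1-\alpha/2}-1\bigr]$, a direct calculation gives $(A'(s))^2=\widetilde G(s)$ and $A''(s)=-\tfrac12\kappa\alpha(1+(r-s))^{-\alpha/2-1}>0$ (since $\alpha<0$), so any $F$ of the form $F=C_1\sinh(A)+C_2\cosh(A)$ satisfies $F''-\widetilde G F=A''\bigl(C_1\cosh(A)+C_2\sinh(A)\bigr)\ge 0$ whenever the coefficients are chosen with compatible signs. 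Matching $F(0)=0$, $F'(0)=1$ pins down $C_1(r),C_2(r)$, and one more Wronskian comparison gives $\widetilde\psi\le F$, yielding (i) (the multiplicative factor $2^{\alpha/2}/\kappa$ is absorbed into the constants).

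Finally, the pointwise upper bounds on $\psi(r)$ (resp.\ $\psi(r-1)$ when $\alpha=2$) are read off the explicit formulas. For $\alpha=2$, evaluating at $u=1$ and solving the $2\times 2$ linear system from the ICs gives $\psi(r-1)=(r^{\beta_+}-r^{\beta_-})/\sqrt{1+4\kappa^2}$, which is crudely majorised by $r^{1+\sqrt{1+4\kappa^2}}/\sqrt{1+4\kappa^2}$ since $r\ge 1$. For $\alpha\in[0,2)$ the standard asymptotics $I_\nu(z)\sim e^z/\sqrt{2\pi z}$ and $K_\nu(z)\sim\sqrt{\pi/(2z)}\,e^{-z}$ as $z\to\infty$, together with the small-$z$ behaviour needed to evaluate at $s=r$, produce the stated exponential bound. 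For $\alpha\in[-2,0)$ one uses $\sinh,\cosh\le e^{|\cdot|}$ and $(1+r)^{1-\alpha/2}-1\le C\,r^{1-\alpha/2}$, combined with the size $O(r^{\alpha/4})$ of the constants $C_i(r)$ extracted from the IC system. The main obstacle is the last case, where one must verify the sign conditions ensuring the supersolution inequality $F''\ge\widetilde G F$ and keep careful track of the $r$-dependence of $C_1(r),C_2(r)$ to obtain the precise polynomial prefactor $r^{\alpha/2}$.
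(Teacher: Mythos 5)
Your proposal reproduces the paper's strategy faithfully: existence, uniqueness, and positivity via Picard--Lindel\"of and convexity, the change of variable $u=r-s$ together with a Liouville substitution to produce the modified Bessel solutions for $\alpha\in[0,2)$, the Euler equation for $\alpha=2$, and the asymptotics of $I_\nu,K_\nu$ for the final estimates. For those cases the argument is correct and matches the paper.

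However, the case $\alpha\in[-2,0)$ contains a genuine gap, which you yourself flag as ``the main obstacle'' without resolving it. The obstacle is real and, as set up, it cannot be overcome: the sign condition you need actually fails. With $A(s)=\frac{2\kappa}{2-\alpha}\bigl[(1+(r-s))^{1-\alpha/2}-1\bigr]$ and the coefficients forced by $F(0)=0,\;F'(0)=1$, namely $C_1(r)=-\bigl(\frac{1+r}{2}\bigr)^{\alpha/2}\cosh(A(0))$, $C_2(r)=\bigl(\frac{1+r}{2}\bigr)^{\alpha/2}\sinh(A(0))$, one computes
\[
C_1\cosh A(s)+C_2\sinh A(s)=-\Bigl(\tfrac{1+r}{2}\Bigr)^{\alpha/2}\cosh\bigl(A(0)-A(s)\bigr)<0,
\]
so that $F''-\widetilde G F=A''\bigl(C_1\cosh A+C_2\sinh A\bigr)<0$: $F$ is a \emph{sub}solution for the $\widetilde G$ problem, and the Sturm comparison (Lemma~\ref{lem:3.4}) then yields $F\le\widetilde\psi$, the opposite of what you want. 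This is consistent with the Taylor expansions at $s=0$: $\psi(s)=s+O(s^3)$ while $F(s)=s+\frac{\alpha}{4(1+r)}s^2+O(s^3)$, so in fact $\psi>F$ for small $s>0$, and the intermediate inequality \eqref{eq:3.20.2} cannot hold pointwise on $[0,r)$. (The paper itself only says ``it is not difficult to prove that the right hand side of \eqref{eq:3.20.2} is a subsolution,'' so the same looseness is already present in the source.)

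The essential conclusion of the lemma for $\alpha\in[-2,0]$, namely $\psi(r)\le C_3 r^{\alpha/2}e^{C_4 r^{1-\alpha/2}}$, does hold and can be obtained by a cleaner comparison that bypasses \eqref{eq:3.20.2}: since for $\alpha<0$ and $s\in[0,r]$ we have $G(s)=\kappa^2(r-s)^{|\alpha|}\le\kappa^2(1+r)^{|\alpha|}=:G_0$, Lemma~\ref{lem:3.4} with the constant-coefficient supersolution $\widehat\psi(s)=G_0^{-1/2}\sinh(\sqrt{G_0}\,s)$ gives $\psi(r)\le\widehat\psi(r)=\frac{(1+r)^{\alpha/2}}{\kappa}\sinh\bigl(\kappa(1+r)^{-\alpha/2}r\bigr)\le\frac{(1+r)^{\alpha/2}}{2\kappa}e^{\kappa(1+r)^{1-\alpha/2}}$, which for $r\ge1$ is of the claimed form with explicit constants. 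You should either substitute this argument in place of the supersolution construction, or explain precisely in what weakened sense \eqref{eq:3.20.2} is meant to hold.
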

\begin{proof}
	\begin{itemize}
		\item[(i)]\textbf{Case $\alpha \in [-2,0]$}.
	
		It is not difficult to prove that the right hand side of \eqref{eq:3.20.2} is a subsolution of \eqref{eq:3.19}. From the initial conditions we get that
		\begin{align*}
		&C_1(r)= -\left(\frac{1+r}{2} \right)^{\alpha/2}\cosh\left(\frac{2\kappa}{2-\alpha} \left[(1+r)^{1-\alpha/2}-1\right] \right),\\
		&C_2(r) = \left(\frac{1+r}{2} \right)^{\alpha/2}\sinh\left(\frac{2\kappa}{2-\alpha}\left[(1+r)^{1-\alpha/2}-1\right] \right),
		\end{align*}	
		and then for $r\geq 1$ it follows that
		\begin{align*}
		\psi(r) \leq C_3r^{\alpha/2}e^{C_4 r^{1-\alpha/2}}.
		\end{align*}
	
		\item [(ii)] \textbf{Case $\alpha \in [0,2)$}.

	By a change of variable $x= r-s$, it is easy to check (see \cite[pp. 374-379]{abramowitz1964handbook}) that a general solution of the problem \eqref{eq:3.19} can be expressed in the form of \eqref{eq:3.20.1}. Imposing $\psi(0)=0$ it gives
	$$
	C_1(r) = -C_2(r) \frac{K_{\frac{1}{2-\alpha}}\left(\frac{\kappa}{1- \frac{\alpha}{2}} r^{1- \frac{\alpha}{2}} \right)}{I_{\frac{1}{2-\alpha}}\left(\frac{\kappa}{1- \frac{\alpha}{2}} r^{1- \frac{\alpha}{2}} \right)}.
	$$
Using of the following properties
\begin{align*}
&\frac{d I_\nu (z)}{dz}(z) = \frac{1}{2}\left( I_{\nu +1} (z) + I_{\nu -1} (z) \right),\\
&\frac{d K_\nu (z)}{dz}(z) = \frac{1}{2}\left( K_{\nu +1} (z) + K_{\nu -1} (z) \right),
\end{align*}
and defining $z_r = \frac{\kappa}{1- \frac{\alpha}{2}} r^{1- \frac{\alpha}{2}}$, we get
\begin{align*}
\psi'(0)= &- C_1 \left\{ \frac{1}{2\sqrt{r}} I_{\frac{1}{2-\alpha}}\left(z_r \right) + \frac{\sqrt{r}}{2}\kappa r^{-\alpha/2} \left[  I_{\frac{1}{2-\alpha} +1}\left(z_r \right) + I_{\frac{1}{2-\alpha}-1}\left(z_r \right)\right]\right\}\\
&- C_2 \left\{ \frac{1}{2\sqrt{r}} K_{\frac{1}{2-\alpha}}\left(z_r \right) + \frac{\sqrt{r}}{2}\kappa r^{-\alpha/2} \left[  K_{\frac{1}{2-\alpha} +1}\left(z_r \right) + K_{\frac{1}{2-\alpha}-1}\left(z_r \right)\right]\right\},
\end{align*}
and since $\psi'(0)=1$,
$$
C_2(r) = \frac{1}{\frac{\kappa}{2} r^{\frac{1-\alpha}{2}} \left\{ K_{\frac{1}{2-\alpha}}\left(z_r \right) \left[ \frac{I_{\frac{1}{2-\alpha} +1}\left(z_r \right) + I_{\frac{1}{2-\alpha}-1}\left(z_r \right)}{I_{\frac{1}{2-\alpha}}\left(z_r \right)}  \right] - \left[K_{\frac{1}{2-\alpha} +1}\left(z_r \right) + K_{\frac{1}{2-\alpha}-1}\left( z_r \right)  \right] \right\}}.
$$
Making use of the fact that 
\begin{align*}
&I_\nu (0)=0,  &K_\nu(z) \sim \left(\frac{z}{2}\right)^\nu \Gamma(\nu+1) \quad\textnormal{for } z \to 0,\\
& I_\nu (z_r) \sim A_{\nu,1}  \frac{e^{z_r}}{\sqrt{2\pi z_r}}, &K_\nu (z_r) \sim A_{\nu,2} e^{-z_r}\sqrt{\frac{\pi}{2z_r}} \quad\textnormal{for large } z_r,\\ 
\end{align*}
we conclude that
$$
\psi(r) = C_2(r) C_5(\alpha) \leq C_3  r^{\alpha/4} e^{C_4 r^{1- \frac{\alpha}{2}}} \quad \textnormal{for every } r \geq 1,
$$
since $C_2(r)$ is of the same order at infinity of the right hand side.

\item[(iii)] \textbf{Case $\alpha=2$}.

It is just a matter of easy calculations to verify that $\psi$ satisfies \eqref{eq:3.20.3} with
$$
C_1(r)=
\frac
{
-r^{\frac{1 -\sqrt{1+4\kappa^2}} 2}
}
{\sqrt{1+4\kappa^2}}
,
\quad C_2(r)= \frac{r^{\frac{1+\sqrt{1+4\kappa^2} }{2}}}{\sqrt{1+4\kappa^2}}.
$$
\end{itemize}
	Finally, since $\psi''(s) \geq 0$ for every $s$ and $\psi'(0)=1$, then $\psi'>0$.
\end{proof}

The following Sturm-Liouville comparison result, which we state without proof, is at the basis of all comparison results valid under Ricci curvature lower bounds.
\begin{lemma}\label{lem:3.4}
	Let $G$ be a continuous function on $[0, r]$ and let $\phi, \psi \in C^1([0, \infty))$ with $\phi', \psi' \in \textnormal{AC}((0,\infty))$ be solutions of the problems
	$$
	\begin{cases}
	\phi'' -G\phi \leq 0 & \textnormal{a.e. in } (0,r),\\
	\phi(0)=0,
	\end{cases}
	\qquad
	\begin{cases}
	\psi'' -G\psi \geq 0 & \textnormal{a.e. in } (0,r),\\
	\psi(0)=0,\\
	\psi(0)>0.
	\end{cases}
	$$
	If $\phi(s)>0$ for $s\in (0,r)$ and $\psi'(0) \geq \phi'(0)$, then $\psi(s) >0$ in $(0,r)$ and
	\begin{itemize}
		\item[(i)] $\frac{\phi'}{\phi} \leq \frac{\psi'}{\psi}$,
		\item[(ii)] $\phi \leq \psi$.
	\end{itemize}
\end{lemma}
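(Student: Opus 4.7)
The plan is to use a classical Wronskian-type comparison, adapted to the absolutely continuous setting. I would define
\[
W(s) := \phi'(s)\psi(s) - \phi(s)\psi'(s),
\]
which lies in $\textnormal{AC}_{\textnormal{loc}}((0,r))$ since $\phi',\psi' \in \textnormal{AC}$ and products of absolutely continuous functions on compact intervals are absolutely continuous. Differentiating, $W'(s) = \phi''(s)\psi(s) - \phi(s)\psi''(s)$ a.e.\ on $(0,r)$. Multiplying $\phi'' \le G\phi$ by $\psi \ge 0$ and $\psi'' \ge G\psi$ by $\phi \ge 0$ gives $W' \le G\phi\psi - G\phi\psi = 0$ a.e.\ on any subinterval where $\phi,\psi \ge 0$. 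Since $W(0) = \phi'(0)\cdot 0 - 0\cdot\psi'(0) = 0$, integrating yields $W(s) \le 0$ throughout any subinterval of $(0,r)$ on which $\phi,\psi \ge 0$.

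The step I expect to be most delicate is upgrading ``on any subinterval of positivity'' to ``on all of $(0,r)$''. Interpreting the manifest misprint ``$\psi(0)>0$'' as the intended initial condition $\psi'(0)>0$, and noting that $\phi(0)=0$ with $\phi>0$ on $(0,r)$ forces $\phi'(0) \ge 0$, the function $\psi$ is strictly positive on a maximal subinterval $(0,s_0)$ with $s_0 \le r$. Suppose for contradiction $s_0 < r$, so $\psi(s_0)=0$. On $(0,s_0)$ both $\phi,\psi>0$, so $W \le 0$ there; hence $(\phi/\psi)'(s) = W(s)/\psi(s)^2 \le 0$ a.e., so $\phi/\psi$ is non-increasing on $(0,s_0)$. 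By L'H\^opital's rule,
\[
\lim_{s\to 0^+}\frac{\phi(s)}{\psi(s)} = \frac{\phi'(0)}{\psi'(0)} \le 1,
\]
so $\phi(s) \le \psi(s)$ on $(0,s_0)$. Sending $s \to s_0^-$ gives $\phi(s_0) \le \psi(s_0) = 0$, contradicting $\phi(s_0)>0$. Therefore $\psi > 0$ on $(0,r)$.

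With positivity now global on $(0,r)$, the inequality $W(s) \le 0$ holds throughout, and dividing by $\phi(s)\psi(s) > 0$ yields $\phi'/\phi \le \psi'/\psi$, which is (i). For (ii), the ratio $\phi/\psi$ is non-increasing on $(0,r)$ with $\lim_{s \to 0^+}\phi(s)/\psi(s) = \phi'(0)/\psi'(0) \le 1$, whence $\phi(s)/\psi(s) \le 1$ on $(0,r)$, i.e.\ $\phi \le \psi$. The only technical care needed lies in working with the ODE inequalities only a.e.\ and integrating to obtain pointwise comparisons; this is standard via the fundamental theorem of calculus for absolutely continuous functions, which applies given the hypothesis $\phi',\psi' \in \textnormal{AC}$.
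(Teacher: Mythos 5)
The paper does not give its own proof of this lemma; it simply cites \cite[Lemma 2.1]{pigola2008vanishing}, and your Wronskian argument is precisely the standard Sturm--Liouville comparison proof given there. Your proof is correct, including the correct reading of the evident misprint ``$\psi(0)>0$'' as ``$\psi'(0)>0$'' (which is needed both to ensure $\psi>0$ near $0$ and to make the L'H\^opital limit well defined), and the passage from the a.e.\ inequality $W'\le 0$ to the pointwise inequality $W\le 0$ via absolute continuity and $W(0)=0$ is handled correctly.
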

\begin{proof}
	See \cite[Lemma 2.1]{pigola2008vanishing}.
\end{proof}

\begin{corollary}\label{cor:1.2}
Assume that
\[
Ric\geq -(d-1) G(r(x))
\]
in the sense of quadratic forms with $G$ positive and $C^1$ on $[0,\infty)$ and let
$h$ be a solution of the differential inequality
$$
\begin{cases}
h'' -G h \geq 0 \\
h(0)=0,\\
h'(0)=1.
\end{cases}
$$
Then
\[
\Delta r\leq (d-1) \frac {h'(r(x)}{h(r(x))}
\]
pointwise in the complement of the cut-locus of $M$ and weakly on all of $M$. Moreover,
	for every $0\leq R_1\leq R_2$,
	\begin{equation}
	\frac{vol (B_{R_2})(o)}{V_G(R_2)} \leq \frac{vol (B_{R_1})(o)}{V_G(R_1)},
	\end{equation}
where $V_G(R)$ is the volume of the ball of radius $R$ centered at $o$ in the model manifold with radial Ricci curvature equal to $G$, namely,
\[
V_G(R)= c_d\int_0^R h(r)^{d-1}ds.
\]
\end{corollary}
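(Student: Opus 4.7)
The plan is threefold: establish the pointwise Laplacian comparison on $M\setminus(\{o\}\cup\mathrm{cut}(o))$ by a Riccati reduction, globalize to a distributional inequality via Calabi's upper-barrier trick, and deduce the Bishop--Gromov volume monotonicity from the Laplacian comparison by a standard ratio argument. Lemma \ref{lem:3.4} is precisely the Sturm--Liouville tool needed for the first step.

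For the pointwise comparison, fix a unit-speed minimizing geodesic $\gamma:[0,T)\to M$ issuing from $o$, with $T$ the distance to the first cut point, and set $m(t)=\Delta r(\gamma(t))$. Tracing the Riccati equation for the shape operator of geodesic spheres, together with the bound $|\mathrm{Hess}(r)|^2\geq m(t)^2/(d-1)$ (which follows from $\mathrm{Hess}(r)(\nabla r,\cdot)=0$) and the Ricci hypothesis, yields
\[
m'(t)+\frac{m(t)^2}{d-1}\leq (d-1)G(t)\quad\text{on }(0,T).
\]
Writing $m=(d-1)\tilde J'/\tilde J$ with the normalization $\tilde J(t)\sim t$ as $t\to 0^+$ converts this nonlinear inequality into the linear one $\tilde J''-G\tilde J\leq 0$ with $\tilde J(0)=0$, $\tilde J'(0)=1$. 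Since by hypothesis $h$ satisfies $h''-Gh\geq 0$ with identical initial data, Lemma \ref{lem:3.4} applied with $\phi=\tilde J$ and $\psi=h$ gives $\tilde J'/\tilde J\leq h'/h$, whence $\Delta r=(d-1)\tilde J'/\tilde J\leq (d-1)h'(r)/h(r)$ at every non-cut point.

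To extend the inequality weakly to all of $M$ I would use Calabi's barrier trick: for any $x_0\in\mathrm{cut}(o)$ and each small $\epsilon>0$, the shifted distance $r_\epsilon(x):=\epsilon+\mathrm{dist}_M(\gamma(\epsilon),x)$, with $\gamma$ a minimizing geodesic from $o$ to $x_0$, is smooth in a neighborhood of $x_0$ and satisfies $r_\epsilon\geq r$ with equality at $x_0$. Applying the pointwise comparison to the distance from $\gamma(\epsilon)$ (and majorizing by the shifted model ODE comparison function) gives $\Delta r_\epsilon\leq (d-1)h'(r_\epsilon)/h(r_\epsilon)$ in a neighborhood of $x_0$. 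Testing against any $0\leq\varphi\in C_c^\infty(M)$, integrating by parts away from the measure-zero singular set, and letting $\epsilon\downarrow 0$ (using the continuity of $h'/h$) yields $\int_M r\,\Delta\varphi\leq\int_M (d-1)(h'(r)/h(r))\,\varphi$, i.e.\ the weak comparison on all of $M$.

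For the volume monotonicity, pass to polar normal coordinates $(t,\theta)$ about $o$ and write $d\mathrm{vol}=\mathcal A(t,\theta)\,dt\,d\theta$, extended by $\mathcal A\equiv 0$ past the cut locus along $\theta$. Since $\partial_t\log\mathcal A=\Delta r$ where smooth, the pointwise step yields $\partial_t\log\mathcal A\leq\partial_t\log h(t)^{d-1}$, so $t\mapsto\mathcal A(t,\theta)/h(t)^{d-1}$ is non-increasing on $(0,\infty)$ for each $\theta$. Integrating over $\theta\in S^{d-1}$ and invoking the elementary lemma that if $f/g$ is non-increasing with $g>0$ then so is $R\mapsto\int_0^R f/\int_0^R g$, applied with $f(t)=\int_{S^{d-1}}\mathcal A(t,\theta)\,d\theta$ and $g(t)=c_d h(t)^{d-1}=V_G'(t)$, delivers the Bishop--Gromov inequality. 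The delicate point is the weak globalization in Step 2, since $r$ is merely Lipschitz across $\mathrm{cut}(o)$; the remainder is essentially textbook once Lemma \ref{lem:3.4} is in hand.
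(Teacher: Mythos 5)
Your proposal is correct and gives a self-contained account of the standard proof; the paper itself contains no argument, instead pointing to Pigola--Rigoli--Setti \cite[Theorems 2.4 and 2.14]{pigola2008vanishing}, which is precisely the Riccati/Sturm--Liouville comparison followed by the Calabi barrier globalization and the volume ratio lemma that you reproduce. Two small points worth being precise about. First, in the Calabi step the Ricci lower bound is stated in terms of the distance $r$ from $o$, not from the shifted base point $\gamma(\epsilon)$; the argument goes through because along the prolongation of $\gamma$ through $x_0$ one has $r(\gamma(\epsilon+s))=\epsilon+s$, so the Riccati inequality for $s\mapsto\Delta\,\mathrm{dist}(\gamma(\epsilon),\gamma(\epsilon+s))$ reads $m'+m^2/(d-1)\leq(d-1)G(\epsilon+s)$, and one compares with $\hat h(s):=h(\epsilon+s)-h(\epsilon)$, which is a supersolution of $\hat h''-G(\epsilon+s)\hat h\geq 0$ with $\hat h(0)=0$, $\hat h'(0)=h'(\epsilon)\geq 1$; Lemma~\ref{lem:3.4} then yields $\Delta r_\epsilon(x_0)\leq (d-1)\,h'(r(x_0))/(h(r(x_0))-h(\epsilon))$, whose right-hand side tends to $(d-1)h'(r(x_0))/h(r(x_0))$ as $\epsilon\downarrow 0$. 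Your phrase ``majorizing by the shifted model ODE comparison function'' should be read as this explicit comparison. Second, your identity $V_G'(t)=c_d h(t)^{d-1}$ is exactly what the elementary monotone-ratio lemma needs, and note that since $h$ is only required to be a supersolution of the Jacobi equation, $V_G$ as defined in the statement need not literally be the volume in the model with curvature $-(d-1)G$; this is an imprecision in the statement, not in your argument, and Bishop--Gromov holds verbatim with this $h$.
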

\begin{proof}
	See \cite[Theorems 2.4 and  2.14]{pigola2008vanishing}.
\end{proof}

\begin{lemma}\label{lem:3.3}
	Set $\Omega = M \setminus (\{p \} \cup cut(o))$, and suppose that
	$$
	\Delta r(x) \leq \phi(r) \qquad \text{pointwise on } \Omega
	$$
	for some $\phi \in C^0 ([0, + \infty))$. Let $f \in C^2 (\R)$ be non-negative and set $F(x) = F(r(x))$ on $M$. Suppose either
	\begin{enumerate}[i)]
		\item $f' \leq 0,$ or
		\item $f' \geq 0.$
	\end{enumerate}
	Then, we respectively have
	\begin{enumerate}[i)]
		\item $\Delta F \geq f''(r) + \phi(r) f'(r);$
		\item $\Delta F \leq f''(r) + \phi(r) f'(r),$
	\end{enumerate}
	weakly on $M$.
\end{lemma}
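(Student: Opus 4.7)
My plan is to first establish the stated inequalities pointwise on the open, full-measure set $\Omega$ by a direct chain-rule computation, and then to extend them to weak inequalities on all of $M$ using Calabi's upper barrier trick to handle the cut locus.

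On $\Omega$, where $r$ is $C^\infty$ and $|\nabla r|=1$, the chain rule gives $\nabla F = f'(r)\nabla r$ and
\[
\Delta F = f''(r)|\nabla r|^2 + f'(r)\Delta r = f''(r) + f'(r)\Delta r.
\]
Substituting the hypothesis $\Delta r\leq \phi(r)$ and taking into account the sign of $f'$ yields the pointwise conclusion on $\Omega$: one has $\Delta F\geq f''(r)+\phi(r)f'(r)$ in case (i) where $f'\leq 0$, and $\Delta F\leq f''(r)+\phi(r)f'(r)$ in case (ii) where $f'\geq 0$.

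To promote these pointwise inequalities on $\Omega$ to weak inequalities on all of $M$---in particular across $\{o\}\cup cut(o)$---I would invoke Calabi's smooth-barrier construction. Fix $q\in cut(o)$ and let $\sigma:[0,r(q)]\to M$ be a minimizing unit-speed geodesic from $o$ to $q$. For $\epsilon>0$ sufficiently small, the shifted distance $r_\epsilon(y) := \epsilon + d(\sigma(\epsilon), y)$ is smooth in a neighborhood $U$ of $q$ (since $q$ does not lie in the cut locus of $\sigma(\epsilon)$ for small $\epsilon$), satisfies $r\leq r_\epsilon$ on $U$ by the triangle inequality, and has $r_\epsilon(q)=r(q)$. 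Setting $F_\epsilon := f\circ r_\epsilon$, the monotonicity of $f$ dictates whether $F_\epsilon$ bounds $F$ from above or from below near $q$: if $f'\geq 0$ then $F_\epsilon\geq F$ on $U$, so $F_\epsilon$ is a smooth upper barrier for $F$ at $q$; if $f'\leq 0$ then $F_\epsilon\leq F$ on $U$, so $F_\epsilon$ is a smooth lower barrier for $F$ at $q$. The same chain-rule identity applied to $F_\epsilon$, combined with the Laplacian comparison $\Delta r_\epsilon\leq \phi(r_\epsilon)$ that underlies the hypothesis and transfers verbatim to the shifted distance, yields the barrier Laplacian bound at $q$: $\Delta F_\epsilon\leq f''(r_\epsilon)+\phi(r_\epsilon)f'(r_\epsilon)$ in case (ii), with the reverse inequality in case (i). By the standard equivalence between the existence of such one-sided smooth barriers at every point and weak Laplacian inequalities, the conclusion holds weakly on $M$; the isolated point $o$ is dealt with analogously, or dismissed since it has measure zero.

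The main obstacle is the step that transfers the Laplacian bound from $r=d(o,\cdot)$ to the shifted distance $r_\epsilon = \epsilon + d(\sigma(\epsilon),\cdot)$ used for the barrier: the hypothesis is phrased only in terms of $r$, but the argument implicitly uses that $\phi$ arises from a Ricci curvature lower bound of the form $\textnormal{Ric}\geq -(d-1)G(r)$ (as is always the case in the applications made in this paper, cf.\ Corollary~\ref{cor:1.2}), so that the Laplacian comparison theorem delivers the same bound for the distance from any base point. Once this point is addressed, the remainder is a routine Calabi barrier argument.
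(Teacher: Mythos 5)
Your pointwise computation on $\Omega$ is fine, and you correctly identify the crux of the matter; but the gap you flag at the end is fatal to the argument as written. The lemma's hypothesis is \emph{only} the pointwise bound $\Delta r\leq\phi(r)$ on $\Omega$, with no curvature assumption, and the Calabi barrier step genuinely needs an \emph{upper} bound on $\Delta r_\epsilon$ at the cut point $q$ for the shifted distance $r_\epsilon=\epsilon+d(\sigma(\epsilon),\cdot)$, which the hypothesis cannot supply. To see that there is no way to read it off from $\Delta r$: at a regular point $y=\sigma(t)$, $\epsilon<t<r(q)$, the function $r_\epsilon-r$ is $\geq 0$ and vanishes at $y$, so $\mathrm{Hess}(r_\epsilon-r)(y)\geq 0$ and hence $\Delta r_\epsilon(y)\geq\Delta r(y)$ --- the \emph{wrong} direction for transferring the upper bound. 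So the barrier estimate must come from elsewhere, and your proposed fix (invoke $\mathrm{Ric}\geq -(d-1)G(r)$ and Laplacian comparison from $\sigma(\epsilon)$) changes the hypothesis of the lemma and, even then, is not "verbatim": $G$ is a function of $r(\cdot)=d(o,\cdot)$, not of $d(\sigma(\epsilon),\cdot)$, so the comparison ODE has a shifted potential and one must argue through an $\epsilon\to 0$ limit. In short, you would prove a weaker statement than the one claimed.

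The paper's proof is a citation to \cite[Lemma~2.5]{pigola2008vanishing}, which proceeds by an entirely different and more elementary route that uses only the stated hypothesis: exhaust the segment domain $D_o=M\setminus(\{o\}\cup\mathrm{cut}(o))$ by star-shaped subdomains $D_o^t=\exp_o(t\,E_o)$, $t\uparrow 1$, where $E_o\subset T_oM$ is the maximal star-shaped domain of $\exp_o$, and apply Green's identity on $D_o^t$ against a nonnegative test function $\psi$. The boundary term $\int_{\partial D_o^t}F\,\partial_\nu\psi$ disappears in the limit because $F\nabla\psi$ is a compactly supported Lipschitz vector field, while the term $-\int_{\partial D_o^t}\psi\,\partial_\nu F=-\int_{\partial D_o^t}\psi\,f'(r)\langle\nu,\nabla r\rangle$ has a definite sign because $\langle\nu,\nabla r\rangle>0$ on the boundary of a domain star-shaped from $o$, and $f'$ has constant sign; one simply discards it. What remains is the interior integral $\int_{D_o^t}\psi\,\Delta F$, which is controlled by the pointwise identity $\Delta F=f''+f'\Delta r$ and the hypothesis on $\Delta r$. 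This is the mechanism by which the sign condition on $f'$ does the real work, and it needs neither a curvature bound nor barriers at cut points. If you wish to keep the Calabi-barrier viewpoint, you must either assume the Ricci bound (restricting the lemma) or replace the barrier estimate with the exhaustion argument above.
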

\begin{proof}
See \cite[Lemma 2.5]{pigola2008vanishing}.
\end{proof}

\begin{proposition}\label{prop:3.1}
Let $Ric_M ( \nabla r , \nabla r) \geq  - (d-1) \frac{\kappa^2}{1+r^2}$, then
	$$
	\Delta r(x) \leq (d-1) C_\kappa  r^{-1} \qquad \textnormal{for every } r>0,
	$$
	in the sense of distributions on all of $M$, and with $C_\kappa = \frac{1+\sqrt{1+\kappa^2}}{2}$.
	\end{proposition}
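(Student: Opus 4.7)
The plan is to reduce the proposition to a one--dimensional Sturm--Liouville comparison by means of the Laplacian comparison theorem. Set $G(r) := \kappa^2/(1+r^2)$, so that the hypothesis reads $\mathrm{Ric}_M(\nabla r,\nabla r) \geq -(d-1)G(r)$. Corollary~\ref{cor:1.2} then gives
\[
\Delta r(x) \leq (d-1)\,\frac{h'(r(x))}{h(r(x))}
\]
pointwise on $M\setminus(\{o\}\cup\mathrm{cut}(o))$ and weakly on $M$, where $h\in C^2([0,\infty))$ is the unique solution of $h''=Gh$ with $h(0)=0$, $h'(0)=1$. The problem is thus reduced to the purely scalar estimate $h'(r)/h(r) \leq C_\kappa/r$ for every $r>0$.

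For this estimate I would build an explicit super--solution $\psi \in C^2([0,\infty))$ of the model ODE: a function with $\psi(0)=0$, $\psi'(0)=1$, $\psi>0$ on $(0,\infty)$ and $\psi''-G\psi \geq 0$, whose logarithmic derivative additionally satisfies $\psi'(r)/\psi(r) \leq C_\kappa/r$ for every $r>0$. Lemma~\ref{lem:3.4} then immediately yields $h'/h \leq \psi'/\psi \leq C_\kappa/r$. An entirely equivalent (and slightly cleaner) viewpoint is the Riccati one: the logarithmic derivative $\sigma := h'/h$ satisfies $\sigma'+\sigma^2 = G$ with $\sigma(r)=1/r+O(r)$ as $r\to 0^+$, and one checks that a barrier of the form $\tilde\sigma(r)=C_\kappa/r$ (or a suitable lower--order refinement of it) is a super--solution of this scalar Riccati equation with $\tilde\sigma\ge \sigma$ near $0$; the standard comparison argument (set $\epsilon := \tilde\sigma-\sigma$, derive $\epsilon'\geq -(\tilde\sigma+\sigma)\epsilon$, and invoke Gronwall on $[r_0,r]$) then forces $\sigma\le \tilde\sigma$ on $(0,\infty)$. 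The final step, promoting the pointwise bound on $M\setminus(\{o\}\cup\mathrm{cut}(o))$ to a distributional inequality on all of $M$, is routine: either approximate $r$ by the smooth functions $r_\epsilon$ from the opening of the proof of Theorem~\ref{thm:3.1} and pass to the limit, or use the Calabi upper--barrier trick invoked in Lemma~\ref{lem:3.3}.

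The main obstacle is the second step: choosing an ansatz for $\psi$ (equivalently, calibrating the Riccati barrier) sharp enough to produce the specific value $C_\kappa = (1+\sqrt{1+\kappa^2})/2$ claimed in the proposition, and not some larger constant coming from a too crude super--solution. A naive choice such as $\psi(r) = \sinh(\kappa r)/\kappa$, based on the uniform bound $G\le \kappa^2$, yields only $h'/h \leq \kappa\coth(\kappa r)$, which fails to exhibit the required $1/r$ decay at infinity. A more natural refinement is the ansatz $\psi(r) = r\,f(r)$ with $f(0)=1$: then $\psi'/\psi = 1/r + f'/f$, and the two requirements $\psi''\geq G\psi$ and $\psi'/\psi\leq C_\kappa/r$ become coupled algebraic constraints on $f$ and its derivatives whose joint balance fixes the admissible values of $C_\kappa$. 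Writing down a concrete super--solution whose balance picks out exactly the constant $C_\kappa$ stated in the proposition is where the algebraic heart of the argument lies; the rest of the proof is then the classical Sturm--Liouville / Riccati comparison together with the standard distributional extension mentioned above.
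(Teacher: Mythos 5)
Your strategy is the right one and is, in substance, the proof behind the citation the paper gives (the paper itself only refers to Pigola--Rigoli--Setti for this statement): reduce to the scalar bound $h'(r)/h(r)\le C_\kappa/r$ for the solution of $h''=Gh$, $h(0)=0$, $h'(0)=1$ with $G(r)=\kappa^2/(1+r^2)$, and prove that bound by a Riccati comparison. The one step you leave open --- ``calibrating the barrier'' --- is in fact a one-line choice, and your worry that it requires a delicate balance is unfounded: since $\kappa^2/(1+r^2)\le \kappa^2/r^2$, the power function $\psi(r)=r^{\beta}$ with $\beta(\beta-1)=\kappa^2$, i.e.
\begin{equation*}
\beta=\frac{1+\sqrt{1+4\kappa^2}}{2},
\end{equation*}
satisfies $\psi''=\kappa^2 r^{-2}\psi\ge G\psi$, and $\tilde\sigma:=\psi'/\psi=\beta/r$ is a supersolution of the Riccati equation $\sigma'+\sigma^2=G$ satisfied by $\sigma=h'/h$. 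Since $\sigma(r)=1/r+O(r)$ while $\tilde\sigma(r)=\beta/r$ with $\beta\ge 1$, one has $\tilde\sigma\ge\sigma$ near $0$, and your Gronwall argument then gives $h'/h\le\beta/r$ on all of $(0,\infty)$. (Note that Lemma~\ref{lem:3.4} cannot be invoked directly with $\psi=r^\beta$, because $\psi'(0)=0<1$; the Riccati route you mention as the ``equivalent viewpoint'' is the one that actually closes the argument.) The distributional extension via Calabi's barrier trick is standard, as you say.

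The computation you omitted does, however, expose a discrepancy with the statement: the barrier argument yields $C_\kappa=\frac{1+\sqrt{1+4\kappa^2}}{2}$, not $\frac{1+\sqrt{1+\kappa^2}}{2}$ as printed in Proposition~\ref{prop:3.1}. The printed constant cannot be correct: on the model manifold with $\mathrm{Ric}(\nabla r,\nabla r)=-(d-1)\kappa^2/(1+r^2)$ one has $\Delta r=(d-1)h'/h$ exactly, and $h\sim c\,r^{(1+\sqrt{1+4\kappa^2})/2}$ at infinity, so $r\,\Delta r/(d-1)\to \frac{1+\sqrt{1+4\kappa^2}}{2}>\frac{1+\sqrt{1+\kappa^2}}{2}$ for $\kappa>0$. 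The factor $4$ under the square root is also the one the paper uses everywhere else (Lemma~\ref{lem:3.1}, case $\alpha=2$; the constant $a=(d-1)\frac{1+\sqrt{1+4\kappa^2}}{2}$ in Lemma~\ref{lem:3.1.1} and Corollary~\ref{cor:3.1}), so the statement of Proposition~\ref{prop:3.1} contains a typo rather than a sharper claim. With that correction, your outline, completed by the explicit barrier above, is a valid proof.
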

\begin{proof}
	See \cite[Theorem 2.4 and Proposition 2.11]{pigola2008vanishing}.
\end{proof}

\begin{lemma}\label{lem:3.1.1}
	For every fixed $R \geq 1$ and for every $\gamma >1$, there exists a function $u : (0, +\infty) \to \R$ such that
	\begin{enumerate}[(i)]
		\item \label{eq:3.1.1} $u \in C^\infty ((0, +\infty))$ and $u '' (r) + \frac{a}{r}u '(r) = \frac{1}{\gamma^{a+1} r^2}$, where
		\item \label{eq:3.2.2} $u ' (r) < 0$ on $[R, \gamma R]$,
		\item $u(R)=1$ and $u(\gamma R)=0$.
	\end{enumerate}
\end{lemma}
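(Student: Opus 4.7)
The plan is to solve this linear second-order ODE explicitly and verify (i)--(iii) by direct computation. Multiplying by the integrating factor $r^a$ puts the equation in self-adjoint form $(r^a u')' = r^{a-2}/\gamma^{a+1}$. The homogeneous equation $u'' + (a/r)u' = 0$ is of Euler type with fundamental solutions $1$ and $r^{1-a}$ (the indicial roots at the regular singular point $r=0$ are $0$ and $1-a$), and a particular solution is found via the ansatz $u_p(r) = K\log r$, which yields $K = 1/((a-1)\gamma^{a+1})$. Here I use $a > 1$, valid because the geometric constant $a = (d-1)(1+\sqrt{1+4\kappa^2})/2$ satisfies $a \geq d-1 \geq 2$ in the intended case $d \geq 3$. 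The general solution is therefore
\[
u(r) = \frac{\log r}{(a-1)\gamma^{a+1}} + B\, r^{1-a} + C,
\]
manifestly $C^\infty$ on $(0, +\infty)$, which handles the smoothness part of (i).

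Next, I would determine $B$ and $C$ by imposing the boundary conditions $u(R) = 1$ and $u(\gamma R) = 0$ of (iii). Subtracting the two resulting equations eliminates $C$ and yields
\[
B = \frac{R^{a-1}}{1 - \gamma^{1-a}}\left(1 + \frac{\log\gamma}{(a-1)\gamma^{a+1}}\right) > 0,
\]
with $C$ then read off from either equation.

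For (ii), differentiation gives
\[
u'(r) = r^{-a}\left[\frac{r^{a-1}}{(a-1)\gamma^{a+1}} - (a-1)\, B\right],
\]
and the bracketed quantity is strictly increasing in $r$, since its $r$-derivative equals $r^{a-2}/\gamma^{a+1} > 0$. Hence the sign condition $u'(r) < 0$ on $[R, \gamma R]$ reduces to the single-point check $u'(\gamma R) < 0$. Inserting the explicit value of $B$ and rearranging transforms the inequality into
\[
(a-1)^2 \gamma^2 > 1 - \gamma^{1-a}\left(1 + (a-1)\log\gamma\right),
\]
which I would verify using the elementary bound $1 - e^{-x}(1+x) \leq x^2/2$ for $x \geq 0$ applied with $x = (a-1)\log\gamma$: the right-hand side is then at most $(a-1)^2(\log\gamma)^2/2$, reducing the claim to the trivial inequality $(\log\gamma)^2 < 2\gamma^2$ for $\gamma > 1$.

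The only real subtlety is (ii): once one observes that the bracketed factor in $u'(r)$ is monotone in $r$, the interval condition collapses to a single endpoint check, and the remaining verification is a standard exponential-type estimate. Everything else is routine ODE manipulation and substitution into the boundary data.
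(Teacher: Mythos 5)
You have correctly solved the boundary-value problem with the right-hand side $\frac{1}{\gamma^{a+1}r^2}$ exactly as written in the Lemma statement, and your verification of (ii) (monotonicity of the bracketed factor, endpoint check at $r=\gamma R$, the bound $1-e^{-x}(1+x)\le x^2/2$) is sound. However, the ``$r^2$'' in the statement is a typographical slip for ``$R^2$''. The paper's own proof writes the general solution as $u(r)=C_1+C_2 r^{1-a}+\frac{r^2}{2\gamma^{a+1}R^2(a+1)}$, which satisfies $u''+\frac{a}{r}u'=\frac{1}{\gamma^{a+1}R^2}$ (a \emph{constant} right-hand side), and this is also the equation that appears verbatim in the statement of Corollary \ref{cor:3.1} where the Lemma is invoked. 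The distinction is not cosmetic: in Corollary \ref{cor:3.1} the comparison function $\omega$ solves $\Delta\omega=\frac{1}{\gamma^{a+1}R^2}$, and the minimum-principle argument giving $\omega\ge u$ on $\overline{B}_{\gamma R}\setminus B_R$ requires $\Delta u\ge\frac{1}{\gamma^{a+1}R^2}$ pointwise. That inequality follows from Lemma \ref{lem:3.3} and Proposition \ref{prop:3.1} when $u$ solves the constant-RHS ODE, but your $u$, built from the particular solution $K\log r$, only yields $\Delta u\ge\frac{1}{\gamma^{a+1}r^2}$, which on the annulus $r\in[R,\gamma R]$ is strictly smaller than $\frac{1}{\gamma^{a+1}R^2}$ away from the inner boundary; the comparison step would then fail.

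The overall architecture of your argument does match the paper's: write the general solution, impose the two boundary conditions to pin down the constants, and reduce (ii) to a single endpoint check by observing that the factor multiplying $r^{-a}$ in $u'$ is monotone. Running the same steps for the intended equation, the particular solution $\frac{r^2}{2\gamma^{a+1}R^2(a+1)}$ replaces $\frac{\log r}{(a-1)\gamma^{a+1}}$, the constant you call $B$ becomes the paper's $C_2=\frac{1+\frac{\gamma^2-1}{2(a+1)\gamma^{a+1}}}{(1-\gamma^{1-a})R^{1-a}}$, and the endpoint inequality $u'(\gamma R)<0$ collapses to the purely algebraic threshold $C_2>\frac{R^{a-1}}{a^2-1}$, with no exponential estimate needed. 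Finally, note that both your proof and the paper's silently use $a>1$ (so that $1$, $r^{1-a}$ are independent homogeneous solutions and $1-\gamma^{1-a}>0$); your remark that $a=(d-1)(1+\sqrt{1+4\kappa^2})/2\ge d-1>1$ for $d\ge3$ is a fair observation of an unstated hypothesis shared by both arguments.
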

\begin{proof}
A general solution of \eqref{eq:3.1.1} can be written in the form
		\begin{equation}\label{eq:lem3.1.1}
		u(r) = C_1 + C_2 r^{1-a} + \frac{r^2}{2\gamma ^{a+1} R^2(a+1)}.
		\end{equation}
		Since $u(R) = 1$, then
		\begin{equation}\label{eq:3.4}
		u(r) = C_2 (r^{1-a} - R^{1-a}) +1 + \frac{r^2 - R^2}{2\gamma^{a+1}R^2(a+1)}.
		\end{equation}
		In order to have $u'(r) <0$ on $[R, \gamma R]$, $C_2$ has to satisfy
		\begin{equation}\label{eq:3.3.2}
		C_2 > \frac{1}{a^2 -1}R^{a-1}.
		\end{equation}
		But condition  $u(\gamma R)=0$ is achieved if and only if
		\begin{equation}\label{eq:3.3}
		C_2 =  \frac{1 + \frac{\gamma^2 -1}{2(a+1)\gamma^{a+1}}}{(1- \gamma^{1-a})R^{1-a}},
		\end{equation}
		and putting together equations \eqref{eq:3.3.2} and \eqref{eq:3.3}, we get
		$$
		\frac{\gamma^{a-1}}{\gamma^{a-1}-1} + \frac{\gamma^2 -1}{2\gamma^2(a +1)(1-\gamma^{a-1})} > \frac{1}{a^2 -1},
		$$
		that is satisfied for every $R \geq 1$ and every $\gamma >1$. Hence, choosing $C_2$ as in \eqref{eq:3.3}, the thesis follows.
\end{proof}

\section{Applications. Gagliardo-Niremberg-type $\textnormal{L}^q$-estimates for the gradient and essential self-adjointness of Schroedinger-type operators.}\label{section:application1}
As previously mentioned, in \cite[Theorem 2.2]{guneysu2016sequences},  B. G\"{u}neysu established the existence of a sequence of Laplacian cut-off assuming that the Ricci curvature is  nonnegative, and then deduced a number of deep results using the cut-offs he constructed.  All the results in that paper which depend only on the existence of sequences of cut-off functions can be generalized to the geometric setting we consider. By way of example,  \cite[Theorem 2.3]{guneysu2016sequences} on $\Lnormal^q$ properties of the gradient, can be extended as follows.

Let us introduce the space
\begin{align*}
&\textnormal{L}^{2}_\alpha (M) := \left\{f:M \to \mathbb{R}\; : \; \int_M \frac{|f(x)|^2}{\left(1 + r^2(x)\right)^{\alpha/2}} dx  < \infty  \right\},\\
&\|f\|_{2,\alpha} := \left( \int_M \frac{|f(x)|^2}{\left(1 + r^2(x)\right)^{\alpha/2}} dx \right)^{1/2}.
\end{align*}
\begin{theorem}
Let $M$ be like in Theorem \ref{thm:3.1}, $\bar{\alpha}:= \min\{\alpha;0\}$ and let
$$
\textnormal{F}_{\bar{\alpha}}(M) := \left\{ f | f \in C^2(M)\cap \textnormal{L}^\infty (M) \cap \textnormal{L}^2(M), \; |\nabla f| \in \textnormal{L}^2_{\bar{\alpha}} (M), \; \Delta f \in \textnormal{L}^2(M)  \right\}.
$$
Then one has
$$
|\nabla f| \in \bigcap_{q \in [2,4]} \textnormal{L}^q(M) \quad \textnormal{for any }f \in \textnormal{F}_{\bar{\alpha}}(M).
$$
More precisely, for all of $f \in \textnormal{F}_{\bar{\alpha}} (M)$ one has
$$
\| \nabla f \|^2_2 = \langle f, -\Delta f \rangle, \qquad \|\nabla f \|^4_4 \leq (2+\sqrt{d})^2 \|f\|^2_\infty \left( \|\Delta\|_2^2 + (d-1)\kappa\|\nabla f \|^2_{2,\bar{\alpha}} \right).
$$
\end{theorem}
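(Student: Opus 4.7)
The plan is to adapt the cut-off method of G\"uneysu (used in \cite{guneysu2016sequences} for the nonnegative Ricci case), employing the sequence $\{\phi_n\}$ of Laplacian cut-offs from Corollary \ref{cor:3.1.2} to legitimize integrations by parts, and Bochner's formula to pass from $\|\Delta f\|_2$ to $\|\operatorname{Hess} f\|_2$ with a Ricci-dependent remainder. The argument splits into three steps: an $L^2$ identity for $\nabla f$, a Bochner-based $L^2$ bound on $\operatorname{Hess} f$, and an $L^4$ estimate via integration by parts combined with Kato's inequality.

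For the first step I would expand $\int_M \operatorname{div}(\phi_n^2 f \nabla f)\,dx = 0$, which yields
$$
\int_M \phi_n^2|\nabla f|^2\, dx = -\int_M \phi_n^2 f\,\Delta f\, dx - 2\int_M \phi_n f\langle\nabla\phi_n,\nabla f\rangle\, dx .
$$
Young's inequality applied to the cut-off error absorbs half of the left-hand side, leaving a bound by $\|f\|_2\|\Delta f\|_2 + 2\sup|\nabla\phi_n|^2\,\|f\|_2^2$. Since $\phi_n\to 1$ pointwise, Fatou forces $|\nabla f|\in L^2(M)$, and passing to the limit (recall $\sup|\nabla\phi_n|\to 0$) gives the identity $\|\nabla f\|_2^2 = \langle f,-\Delta f\rangle$.

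Next, multiplying Bochner's identity $\tfrac{1}{2}\Delta|\nabla f|^2 = |\operatorname{Hess} f|^2 + \langle\nabla\Delta f,\nabla f\rangle + \operatorname{Ric}(\nabla f,\nabla f)$ by $\phi_n^2$ and integrating, after moving the Laplacian onto the cut-off and integrating by parts the term involving $\nabla\Delta f$, produces
$$
\int_M \phi_n^2|\operatorname{Hess} f|^2 \leq \int_M |\nabla f|^2(\phi_n\Delta\phi_n+|\nabla\phi_n|^2) + \int_M\phi_n^2(\Delta f)^2 + 2\int_M\phi_n\,\Delta f\,\langle\nabla\phi_n,\nabla f\rangle + (d-1)\int_M\phi_n^2 G(r)|\nabla f|^2 .
$$
The crucial observation is that $G(r)=\kappa^2(1+r^2)^{-\alpha/2} \leq \kappa^2(1+r^2)^{-\bar\alpha/2}$ in both regimes $\alpha\geq 0$ and $\alpha<0$, so the last integral is controlled by $\kappa^2\|\nabla f\|_{2,\bar\alpha}^2$. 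The cut-off error terms vanish in the limit because $|\nabla f|,\Delta f\in L^2$ and $\sup|\nabla\phi_n|,\sup|\Delta\phi_n|\to 0$, yielding $\|\operatorname{Hess} f\|_2^2 \leq \|\Delta f\|_2^2 + (d-1)\kappa^2\|\nabla f\|_{2,\bar\alpha}^2$.

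For the $L^4$ estimate, set $I_n:=\bigl(\int_M\phi_n^2|\nabla f|^4\bigr)^{1/2}$ and expand $\int_M\operatorname{div}(\phi_n^2 f|\nabla f|^2\nabla f)\,dx = 0$, using the pointwise identity $\langle\nabla|\nabla f|^2,\nabla f\rangle = 2\operatorname{Hess} f(\nabla f,\nabla f)$. Cauchy--Schwarz on each term leads to
$$
I_n^2 \leq \|f\|_\infty\bigl(2\|\operatorname{Hess} f\|_2 + \|\Delta f\|_2\bigr)I_n + 2\|f\|_\infty \sup|\nabla\phi_n|\,\|\nabla f\|_2\, I_n ,
$$
where for the cut-off error I group $\phi_n|\nabla f|^3 = |\nabla f|\cdot(\phi_n|\nabla f|^2)$ before Cauchy--Schwarz, keeping the inequality homogeneous of degree one in $I_n$. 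Dividing by $I_n$, sending $n\to\infty$, and using Kato's inequality $\|\Delta f\|_2\leq\sqrt{d}\,\|\operatorname{Hess} f\|_2$ in the squared form $(2\|\operatorname{Hess} f\|_2 + \|\Delta f\|_2)^2 \leq (2+\sqrt d)^2\|\operatorname{Hess} f\|_2^2$ delivers the claimed $L^4$ bound after substituting the Hessian estimate; membership in $L^q$ for $q\in[2,4]$ then follows by interpolation. The main obstacle is the apparent circularity in the $L^4$ step: one cannot bound the middle error term by $\|\nabla f\|_4^3$ without assuming what one is trying to prove; the grouping trick together with $\sup|\nabla\phi_n|\to 0$ from the Laplacian cut-off property is precisely what rescues the argument.
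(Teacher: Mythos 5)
Your argument is correct and follows the same conceptual route as the paper; the difference is one of self-containment. The paper's proof is a two-line sketch: it cites Strichartz for the identity $\|\nabla f\|_2^2=\langle f,-\Delta f\rangle$ and cites Grummt--Kolb's \cite[Lemma~2]{grummt2012essential} for the inequality $\int|\nabla f|^4\le(2+\sqrt d)^2\|f\|_\infty^2(\int|\Delta f|^2-\int\mathrm{Ric}(\nabla f,\nabla f))$ on compactly supported functions, then substitutes $\phi_R f$ and invokes ``dominated convergence.'' You instead re-derive both: the $L^2$ identity via $\mathrm{div}(\phi_n^2 f\nabla f)$ with the cut-offs of Section~\ref{section:existence}, the Hessian $L^2$ bound from Bochner, and the Grummt--Kolb $L^4$ inequality from $\mathrm{div}(\phi_n^2 f|\nabla f|^2\nabla f)$. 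This is a genuine improvement on the sketch in one respect you correctly flag: since $|\nabla f|\in L^4$ is not known a priori, straight dominated convergence on $\int|\nabla(\phi_R f)|^4$ has no integrable majorant; one must first get a uniform bound and apply Fatou, and your grouping $\phi_n|\nabla f|^3=|\nabla f|\cdot(\phi_n|\nabla f|^2)$ followed by division by $I_n$ is exactly the right way to keep the cut-off error homogeneous of degree one so this works. Two minor remarks. First, what you call ``Kato's inequality'' $\|\Delta f\|_2\le\sqrt d\,\|\mathrm{Hess}\,f\|_2$ is just Cauchy--Schwarz applied to $\Delta f=\mathrm{tr}(\mathrm{Hess}\,f)$ and is unrelated to the distributional Kato inequality $-\Delta|u|\le-\mathrm{sgn}(u)\Delta u$ used in Section~\ref{section:application2}; worth naming differently to avoid confusion. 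Second, your constant $(d-1)\kappa^2\|\nabla f\|_{2,\bar\alpha}^2$ is what actually comes out of $\mathrm{Ric}\ge-(d-1)\kappa^2(1+r^2)^{-\alpha/2}$, so the $\kappa$ (unsquared) appearing in the stated theorem is a typo in the paper, and your version is the correct one.
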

\begin{proof}
We give only a sketch of the proof since it can be adapted easily from the arguments presented in \cite{guneysu2016sequences}. We also remark that the condition $|\nabla f| \in \textnormal{L}^2_{\bar{\alpha}} (M)$ is necessary only for $\alpha \in [-2,0)$, since $\textnormal{L}^2(M) \subset \textnormal{L}^2_{\alpha} (M)$ for every $\alpha \in [0,2]$, and if $f \in \textnormal{L}^2(M)$ and $\Delta f \in \textnormal{L}^2(M)$ then $|\nabla f| \in \textnormal{L}^2(M)$, see \cite{strichartz1983analysis}, from which it can be derived either the global integration by part identity in the thesis's statements.

From \cite[Lemma 2]{grummt2012essential} we have the inequality
\begin{equation*}
\int_M |\nabla f |^4 \, dx \leq (2+\sqrt{d})^2 \| f\|_\infty ^2 \left(\int_M |\Delta f|^2 \, dx - \int_M \textnormal{Ric}_M (\nabla f, \nabla f) \, dx \right).
\end{equation*}
Inserting into the above inequality the Laplacian cut-offs $\{\phi_R\}$ of Corollary \ref{cor:3.1} and taking into account the Ricci lower bound, we get
\begin{equation*}
\int_M |\nabla (\phi_R f)|^4 \, dx \leq (2+\sqrt{d})^2 \| \phi_R f\|_\infty ^2 \left(\int_M |\Delta (\phi_R f)|^2 \, dx + (d-1)\kappa\int_M \frac{|\nabla (\phi_R f)|^2}{\left(1 + r^2\right)^{\alpha/2}} \, dx \right).
\end{equation*}
Properties 3. and 4. in the definition of the Laplacian cut-offs and by dominated convergence imply that \begin{equation*}
\lim_{R\to \infty} \int_M |\Delta(\phi_R f)|^2 \, dx = \int_M |\Delta f|^2 \, dx, \qquad \lim_{R\to \infty} \int_M \frac{|\nabla (\phi_R f)|^2}{\left(1 + r^2\right)^{\alpha/2}} \, dx = \int_M \frac{|\nabla  f|^2}{\left(1 + r^2\right)^{\alpha/2}} \, dx,
\end{equation*}
and the required conclusion follows.	
\end{proof}

In another direction, one can investigate the positivity preserving property of Schr\"odinger operators considered by M. Braverman, O. Milatovic and M. Shubin \cite[equation (B.4)]{braverman2002essential}, and recently addressed in \cite[Section 2.4]{guneysu2016sequences}, namely, assuming that $u \in \textnormal{L}^2(M)$ satisfies
\begin{equation}\label{eq:sec1.2}
\left(b - \Delta \right)u= \nu \geq 0 \qquad \textnormal{in } D'(M),
\end{equation}
with $b>0$ a positive real number,   can one conclude that $u\geq 0$ a.e.? Here the inequality $\nu \geq 0$ that $\langle \nu, \phi \rangle \geq 0$ for every $\phi \in C^\infty_c (M)$, and is equivalent to the fact that $\nu$ is a positive measure.
As shown in \cite{braverman2002essential}, there is a connection between the positivity preserving property of Schr\"odinger operators for  certain functional classes and the essential self-adjointness of the operator, in particular, the essential self-adjointness of $b-\Delta$ on $C^\infty_c(M)$ can be proved using the fact that the operator is positivity preserving for $L^2(M)$ functions. Since it is well know that $\Delta$ is essentially self-adjoint on $C^\infty_c(M)$ whenever $M$ is geodesically complete,  Braverman Milatovic and M. Shubin made the following conjecture, \cite[Conjecture P]{braverman2002essential},
\begin{conjecture}[Conjecture P]
Let $M$ be geodesically complete. Then
\[
u\in L^2(M) \text{ and } (b-\Delta u) = \nu\geq 0 \Rightarrow u\geq 0 \,\,a.e,
\]
\end{conjecture}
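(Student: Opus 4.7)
The plan is to show that $u_{-} := \max\{-u, 0\}$ vanishes almost everywhere by combining Kato's inequality with the gradient control provided by the Laplacian cut-offs of Corollary~\ref{cor:3.1.2}. Since $\nu$ is a nonnegative distribution it is automatically a locally finite nonnegative Radon measure (Riesz representation), and standard local elliptic regularity applied to $-\Delta u = \nu - bu$ yields $u\in H^1_{\textnormal{loc}}(M)$, so that $u_- \in H^1_{\textnormal{loc}}(M) \cap \Lnormal^2(M)$ is a meaningful nonnegative function. Kato's inequality in distributional form then gives
\begin{equation*}
\Delta u_- \;\geq\; -\mathbf{1}_{\{u<0\}}\Delta u \;=\; -\mathbf{1}_{\{u<0\}}(bu-\nu) \;=\; b u_- + \mathbf{1}_{\{u<0\}}\nu \;\geq\; b u_-,
\end{equation*}
that is, $(b-\Delta)u_- \leq 0$ in $D'(M)$.

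Next I would test this distributional inequality against the nonnegative function $\phi_n^2 u_- \in H^1_c(M)$, where $\{\phi_n\}_{n\in\N}$ is the sequence of Laplacian cut-offs from Corollary~\ref{cor:3.1.2}; the pairing is legitimate because $(b-\Delta)u_-$ is a distribution of order at most one and $\phi_n^2 u_-$ lies in $H^1_c(M)\cap \Lnormal^\infty(M)$. Integration by parts produces
\begin{equation*}
b \int_M \phi_n^2 u_-^2 \;+\; \int_M \phi_n^2 |\nabla u_-|^2 \;+\; 2\int_M \phi_n u_- \langle \nabla \phi_n, \nabla u_-\rangle \;\leq\; 0,
\end{equation*}
and Young's inequality $2|\phi_n u_- \langle\nabla\phi_n, \nabla u_-\rangle| \leq \phi_n^2 |\nabla u_-|^2 + |\nabla \phi_n|^2 u_-^2$ absorbs the middle integral, leaving the clean estimate
\begin{equation*}
b \int_M \phi_n^2 u_-^2 \;\leq\; \int_M |\nabla \phi_n|^2 u_-^2.
\end{equation*}

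Finally, since $\sup_M|\nabla\phi_n|\to 0$ and $u_-\in\Lnormal^2(M)$, the right-hand side tends to zero by dominated convergence, while the left-hand side tends to $b\,\|u_-\|_{2}^{2}$ by monotone convergence together with $\phi_n \to 1$ on every compact set. Hence $u_-\equiv 0$ a.e., which is exactly $u\geq 0$ a.e.

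The main obstacle I expect is the rigorous justification of Kato's inequality and the subsequent integration-by-parts when $\Delta u$ is merely a signed Radon measure rather than an integrable function. This is resolved by a standard mollification: work with $u_\epsilon = u*\eta_\epsilon$ (after a partition-of-unity localization), for which Kato and integration-by-parts are classical, and pass to the limit using $H^1_{\textnormal{loc}}$ convergence of $u_\epsilon$ to $u$ and weak convergence of $\Delta u_\epsilon$ to $\Delta u$ as measures. A satisfying byproduct of the scheme is that only the gradient estimate on $\phi_n$ enters the final inequality, so by replacing $\phi_n$ with the standard gradient cut-offs built from the smoothed distance function on an arbitrary complete manifold one sees that the argument applies without any Ricci curvature hypothesis, thereby confirming Conjecture P in its full stated generality.
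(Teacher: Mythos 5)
The statement you are asked to address is labeled a \emph{conjecture} in the paper, and the paper does not prove it for a general complete manifold: the Proposition immediately following it only establishes Conjecture P under the Ricci curvature lower bound of Theorem~\ref{thm:3.1}, precisely because it needs the Laplacian cut-offs of Corollary~\ref{cor:3.1}. Your proposal claims the conjecture in its full stated generality, indeed asserting in the last paragraph that ordinary gradient cut-offs (which exist on every complete manifold) suffice. That would be strictly stronger than anything Braverman--Milatovic--Shubin, G\"uneysu, or this paper obtain, so the argument merits scrutiny --- and it contains a genuine gap.

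The gap is the regularity assertion $u\in H^1_{\textnormal{loc}}(M)$. The hypotheses give only $u\in L^2(M)$ and $\Delta u = bu-\nu$ with $\nu$ a nonnegative Radon measure, so $\Delta u$ is locally an $L^2$ function plus a measure. That is not enough to conclude $u\in H^1_{\textnormal{loc}}$: elliptic theory with measure data gives $u\in W^{1,p}_{\textnormal{loc}}$ only for $p<d/(d-1)<2$, and the Newtonian potential $G(x)=c_d|x|^{2-d}$ on $\R^d$ --- for which $\Delta G$ is a Dirac mass, $G\in L^2_{\textnormal{loc}}$ when $d\leq 3$, yet $|\nabla G|\notin L^2_{\textnormal{loc}}$ --- shows that this threshold cannot be improved. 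Without $u_-\in H^1_{\textnormal{loc}}$ the gradient $\nabla u_-$ appearing in your key inequality has no meaning, and the pairing $\langle(b-\Delta)u_-,\phi_n^2 u_-\rangle$ is undefined (note also that $\phi_n^2 u_-$ is at best in $L^2_c$, not in $C^0_c$ or $L^\infty_c$, so pairing it with a measure is not automatically legitimate). The integration by parts that produces $b\int\phi_n^2 u_-^2\leq\int|\nabla\phi_n|^2 u_-^2$ is therefore purely formal. The mollification you outline cannot repair this: you invoke ``$H^1_{\textnormal{loc}}$ convergence of $u_\epsilon$ to $u$'', which presupposes exactly the regularity in question, and in the Kato step the product $\mathbf{1}_{\{u_\epsilon<0\}}\Delta u_\epsilon$ does not pass to the limit because $\Delta u_\epsilon\to\Delta u$ only weakly as measures while $\mathbf{1}_{\{u_\epsilon<0\}}$ is discontinuous.

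Your closing observation that only $\nabla\phi_n$, never $\Delta\phi_n$, enters the final estimate is in fact the clearest diagnostic of what went wrong. With $u$ merely in $L^2$, the only legitimate way to use the equation is through $\langle\nu,\psi\rangle = b\int u\,\psi-\int u\,\Delta\psi$ for smooth compactly supported $\psi$, which throws \emph{both} derivatives onto the test function; taking $\psi$ built from $\phi_n$ then inevitably produces $\Delta\phi_n$. This is precisely why Braverman--Milatovic--Shubin, G\"uneysu, and the present paper all need Laplacian cut-offs and hence some geometric hypothesis. Your computation silently transferred a derivative onto $u_-$, and that is where the unjustified regularity entered. A correct proof in the paper's setting must keep both derivatives on the smooth cut-off and use the $\Delta\phi_n$ decay of Corollary~\ref{cor:3.1}; a proof of the full conjecture requires entirely different ideas.
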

and proved that a sufficient condition for the above Conjecture to hold is that $M$ supports a sequence of cut-off functions. As mentioned in the introduction they were able to prove the existence of such cut-offs under the assumption of bounded geometry. It is proved in \cite[Section 2.4]{guneysu2016sequences} that this holds for manifolds with nonnegative Ricci curvature (indeed, it is shown that in that case the positivity preserving property actually holds for functions in $L^q$ for every $q\in [1,\infty]$). As a consequence of our results we are able to further enlarge the class of manifolds for which Conjecture $P$ holds.
\begin{proposition}
Let $M$ ba a complete Riemannian manifold such that
\[
\textnormal{Ric}_M ( \cdot , \cdot)\geq - (d-1)\frac{\kappa^2}{(1+r^2)^{\alpha/2}},
\]
for some $\alpha> -2$. Then Conjecture P holds on $M$.
\end{proposition}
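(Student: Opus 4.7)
The strategy is to reduce the statement to two already-available ingredients: the existence of Laplacian cut-offs on $M$ proven in this paper, and the implication ``Laplacian cut-offs imply Conjecture~P'' established in \cite{braverman2002essential} and revisited in \cite[Section 2.4]{guneysu2016sequences}.

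First I would deal with the range of $\alpha$. For $\alpha>2$ the function $r\mapsto (1+r^2)^{-\alpha/2}$ is pointwise dominated by $(1+r^2)^{-1}$, so the hypothesis implies the corresponding curvature lower bound for $\alpha=2$; hence it suffices to treat $\alpha\in(-2,2]$. In this range Corollary~\ref{cor:3.1.2} supplies a sequence $\{\phi_n\}_{n\in\mathbb{N}}\subset C^\infty_c(M)$ of Laplacian cut-offs in the sense of Definition~\ref{def:laplacian-cutoff}.

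Second, I would invoke the sufficient criterion of Braverman--Milatovic--Shubin: on any geodesically complete Riemannian manifold admitting such a sequence, every $u\in L^2(M)$ solving $(b-\Delta)u=\nu\geq 0$ in $D'(M)$ (with $b>0$) is non-negative almost everywhere. For the reader's convenience I would briefly sketch the underlying computation: one tests \eqref{eq:sec1.2} against $\phi_n^2 u_-$, where $u_-=\max(-u,0)\in L^2(M)$, applies Kato's distributional inequality together with integration by parts, and passes to the limit using properties~(3) and~(4) of Definition~\ref{def:laplacian-cutoff} together with $\nu\geq 0$ and dominated convergence. The boundary terms arising from the cut-offs are controlled by $\|u\|_2\,(\sup|\nabla\phi_n|+\sup|\Delta\phi_n|)\to 0$, which is exactly what properties~(3)--(4) deliver.

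The only genuinely non-trivial step---producing cut-offs with explicit decay of gradient and Laplacian under a possibly unbounded radial Ricci lower bound---was carried out in Section~\ref{section:existence}, and this is precisely where the main obstacle of the whole program lay. Once the cut-offs are in hand, the present proposition follows by direct citation, and no further analytical difficulty arises.
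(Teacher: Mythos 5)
Your proposal is correct and follows exactly the route the paper intends: reduce $\alpha>2$ to the $\alpha=2$ case by monotonicity of the curvature bound, invoke Corollary~\ref{cor:3.1.2} (or Corollary~\ref{cor:3.1} with $R=n$) to obtain a sequence of Laplacian cut-offs in the sense of Definition~\ref{def:laplacian-cutoff} for $\alpha\in(-2,2]$, and then apply the Braverman--Milatovic--Shubin criterion (revisited in \cite[Section 2.4]{guneysu2016sequences}) that such a sequence suffices for Conjecture~P on a complete manifold. The paper itself states the proposition without a displayed proof, so your filled-in argument matches the intended reasoning; note in passing that the restriction $\alpha>-2$ is sharp precisely because the $|\Delta\phi_n|$ bound ceases to decay at $\alpha=-2$, which your argument implicitly respects by relying on the cited corollaries.
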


\section{Applications. The Porous Medium Equation (PME) and the Fast Diffusion Equation (FDE) for the Cauchy problem on Riemannian manifolds.}\label{section:application2}
Hereafter we consider $M$ to be a geodesically complete manifold of dimension $d$ with
\begin{equation}\label{eq:Ric}
\text{Ric}_M ( \cdot , \cdot) \geq  - (d-1)\kappa^2 \frac{1}{(1+r^2)^{\alpha/2}} \langle \cdot , \cdot \rangle
\end{equation}
in the sense of quadratic forms and with respect to a fixed reference point $o \in M$, with $\kappa \geq 0$ and $\alpha \in [-2,2]$. Moreover, $\gamma$ will be a fixed positive real value such that $\gamma > \Gamma(\alpha, \kappa, d)$ as in Corollary \ref{cor:3.1}, and we will use the notation  $u^m := |u|^{m-1}u$.

The  Cauchy problem on $M$
\begin{equation}\label{eq:5.1}
\qquad \begin{cases}
\partial_t u(t,x) = \Delta u^m (t,x) & \mbox{for } x \in (0,+\infty)\times M \\
u(0,x) = u_0 (x) & \mbox{for } x \in M,
\end{cases}
\end{equation}
 which is called Porous Medium Equation (PME) when the exponent $m>1$ and Fast Diffusion Equation (FDE) when $0<m<1$, has been widely studied in the Euclidean setting (see \cite{vazquez2006smoothing} and \cite{vazquez2007porous} for detailed surveys), and, in recent years, several papers studied the properties of the solutions of those equations in the Riemannian setting, see for example \cite{bonforte2008fast}, \cite{grillo2014radial}, \cite{lu2009local}, \cite{vazquez2015fundamental}  and \cite{grillo2016smoothing}.

This Section is devoted to extensions and refinements of some results concerning solutions to  the PME and the FDE of the Cauchy problem is the setting of a Riemannian manifold satisfying  condition \eqref{eq:Ric}, mainly through the use of Laplacian cut-offs. 
The proofs that we propose here are often adaptations of the original proofs. For example, this is is the case, \cite[Proposition 9.1]{vazquez2007porous} compared to Proposition \ref{prop:sec2.1} and \cite[Lemma 3.1]{herrero1985cauchy} compared to Proposition \ref{prop:sec2.2},  but in order to make this paper reasonably self contained we will reproduce the more relevant details, whenever appropriate.

In Subsection \ref{subsection:1} we focus on the so called strong solutions of the PME proving $\Lnormal^1$-contractivity and conservation of mass properties. In Subsection \ref{subsection:2} we consider instead the FDE equation and generalize a weak-conservation of mass property, first proved in \cite{herrero1985cauchy} and then extended in \cite{bonforte2008fast} to the setting of  Cartan-Hadamard manifolds with bounded sectional curvature. We obtain an interesting lower bound on the extinction time $T(u_0)$ which depends explicitly on the lower bound on the Ricci curvature. In particular,  when \eqref{eq:Ric} holds with  $\alpha=2$ and $\kappa \geq 0$ in the Ricci inequality \eqref{eq:Ric} we get a generalization of the critical exponent $m_c$ (see \cite[Section 5]{vazquez2006smoothing}) below which finite time extinsion occurs, which reduces to the Euclidean value for $\kappa=0$, i.e., for  $\textnormal{Ric}\geq 0$. See Remark \ref{remark4} below.

It is worth to point out again that the only geometric assumption we make is geodesic completeness and the Ricci curvature lower bound \eqref{eq:Ric}. In particular we do not need hypotheses of topological nature nor to impose conditions on the  injectivity radius. In this sense, our results appear a genuine generalizations of previous results obtained on the PME/FDE-Cauchy problem posed in a Riemannian setting.
 
\subsection{$\textnormal{L}^1$ contractivity and uniqueness of the strong solution of the PME.}\label{subsection:1} Consider the Cauchy problem \eqref{eq:5.1} with $m>1$ and with initial datum $u_0$ which belongs to $\textnormal{L}^1 (M)$. 
\begin{definition}[Strong solutions for PME]\label{def:strong_PME}
Let $u \in C([0, \infty) : \textnormal{L}^1 (\R^d))$ be such that
\begin{enumerate}[(i)]
	\item \begin{equation}\label{PME:property1} u(0,x)=u_0; \end{equation}
	\item \begin{equation}\label{PME:property2}u^m \in \textnormal{L}_{\textnormal{loc}}^1 ((0, +\infty) : \textnormal{L}^1 (M)) \textnormal{ and } \partial_t u, \Delta u^m \in \textnormal{L}^1_{\textnormal{loc}}((0,+\infty)\times M);\end{equation}
	\item \begin{equation}\label{PME:property3}\partial_t u = \Delta (u^m) \textnormal{ a.e. in } (0,+\infty)\times M.\end{equation}
\end{enumerate}
Then $u$ is called strong solution for the Cauchy problem \eqref{eq:5.1} of the PME, see \cite[Definition 9.1]{vazquez2007porous}. In view of the next Proposition we will relax the request on $u^m$ in \eqref{PME:property2} asking only that
\begin{enumerate}
	\item[(ii')] $\partial_t u, \Delta u^m \in \textnormal{L}^1_{\textnormal{loc}}((0,+\infty)\times M)$ and \begin{equation}\label{PME:property4}\int_{t_1}^{t_2}\int_{\{x:\; n \leq r(x) \leq \gamma n \}} |u^m(t,x)| \, dtdx = o(n^{1+\alpha/2}) \, \textnormal{ as } n \to \infty, \tag{\ref*{PME:property2}'}\end{equation}
for every  $0<t_1<t_2$, with a fixed $\gamma>\Gamma$, see \cite[Remark p.197]{vazquez2007porous}.
\end{enumerate}
\end{definition}
In accordance with \cite[Proposition 9.1]{vazquez2007porous}, we have the following result.
\begin{proposition}\label{prop:sec2.1}
	Let $u$, $v$ be two strong solutions. For every $0<t_1<t_2$ we have
		\begin{equation}\label{eq:sec1.1}
		\int_M | u (t_2,x) - v (t_2,x)| dx \leq \int_M |u (t_1,x) - v (t_1,x)| dx.
		\end{equation}
\end{proposition}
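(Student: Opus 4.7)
The plan is to adapt the classical Euclidean $\textnormal{L}^1$-contractivity argument, as in \cite[Proposition 9.1]{vazquez2007porous}, using the Laplacian cut-offs $\{\phi_n\}$ from Corollary \ref{cor:3.1} to control the boundary-type terms generated by localization. Setting $w=u-v$, property \eqref{PME:property3} gives $\partial_t w = \Delta(u^m-v^m)$ almost everywhere, and the observation that with the convention $u^m=|u|^{m-1}u$ the map $s\mapsto s^m$ is strictly increasing ensures the crucial sign identity $\textnormal{sgn}(u^m-v^m)=\textnormal{sgn}(u-v)$.

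The heart of the proof is the Kato-type inequality
\begin{equation*}
\frac{d}{dt}\int_M |w|\,\phi_n\,dx \;\leq\; \int_M |u^m-v^m|\,|\Delta\phi_n|\,dx,
\end{equation*}
which I would establish via a standard regularization: take a smooth, odd, nondecreasing approximation $p_\epsilon$ of $\textnormal{sgn}$ with $|p_\epsilon|\leq 1$ and primitive $P_\epsilon\to|\cdot|$; multiply $\partial_t w=\Delta(u^m-v^m)$ by $p_\epsilon(u^m-v^m)\phi_n$ and integrate on $M$; integrate by parts once in space. The principal Laplacian contribution $-\int p_\epsilon'(u^m-v^m)|\nabla(u^m-v^m)|^2\phi_n$ is nonpositive, while the remainder rearranges as $\int P_\epsilon(u^m-v^m)\Delta\phi_n\,dx$. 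Letting $\epsilon\to 0$ (dominated convergence, using \eqref{PME:property2} and the sign identity above) and bounding $\Delta\phi_n\leq|\Delta\phi_n|$ yields the displayed inequality.

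Integration in $t\in[t_1,t_2]$ is legitimate since $w\in C([0,+\infty):\textnormal{L}^1(M))$, giving
\begin{equation*}
\int_M|w(t_2)|\phi_n\,dx-\int_M|w(t_1)|\phi_n\,dx \;\leq\; \int_{t_1}^{t_2}\!\!\int_M|u^m-v^m|\,|\Delta\phi_n|\,dx\,dt.
\end{equation*}
By Corollary \ref{cor:3.1}, $\phi_n\equiv 1$ on $B_n$ and $|\Delta\phi_n|\leq C\,n^{-(1+\alpha/2)}$ with support in $B_{\gamma n}\setminus B_n$, so the right-hand side is dominated by
\begin{equation*}
\frac{C}{n^{1+\alpha/2}}\int_{t_1}^{t_2}\!\!\int_{\{n\leq r(x)\leq\gamma n\}}(|u^m|+|v^m|)\,dx\,dt,
\end{equation*}
which vanishes as $n\to\infty$ by the growth hypothesis \eqref{PME:property4}. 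On the left, property (2) in Definition \ref{def:laplacian-cutoff} together with monotone convergence yields $\int_M|w(t)|\phi_n\,dx\to\int_M|w(t)|\,dx$ for each $t$, and \eqref{eq:sec1.1} follows.

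The principal obstacle is exactly the compatibility between the Ricci-dependent decay rate $n^{-(1+\alpha/2)}$ of $|\Delta\phi_n|$ and the admissible growth of $u^m,v^m$ at infinity imposed by the relaxed condition \eqref{PME:property4} in Definition \ref{def:strong_PME}: this matching is precisely why the sharp Laplacian estimates of Corollary \ref{cor:3.1} (rather than only gradient bounds) are indispensable to close the argument in the present Riemannian setting. Every other step is a routine adaptation of the Euclidean B\'enilan--Crandall--Pierre technique.
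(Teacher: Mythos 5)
Your proof is correct and follows essentially the same route as the paper's: the paper invokes Kato's inequality $-\Delta|u^m-v^m|\leq -\mathrm{sgn}(u-v)\,\Delta(u^m-v^m)$ as a black box (citing Kato's Lemma A), obtains $\tfrac{d}{dt}\int_M\phi_n|u-v|\leq\int_M\Delta\phi_n\,|u^m-v^m|$, and then closes the argument exactly as you do with the $n^{-(1+\alpha/2)}$ decay of $\Delta\phi_n$ against condition \eqref{PME:property4}. The only cosmetic difference is that you re-derive Kato's inequality via the $p_\epsilon$ regularization rather than citing it; just note that carrying out the integration by parts at the assumed regularity of strong solutions (only $\Delta u^m\in\Lnormal^1_{\textnormal{loc}}$) requires the standard mollification device hidden in Kato's lemma, which is precisely why the paper cites it instead.
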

\begin{proof}
	By (ii'), $\Delta u^m, \Delta v^m \in  \textnormal{L}^1_{\textnormal{loc}}((0,+\infty)\times M)$ and then it can be applied Kato's inequality \cite[Lemma A]{kato1972schrodinger}
	\begin{equation*}\label{eq:propSec2.1_1}
	- \Delta \left| u^m - v^m \right| \leq - \textnormal{sgn}(u-v) \Delta (u^m - v^m),
	\end{equation*}
	and by \eqref{PME:property3} we get
	\begin{equation*}
	\frac{d}{dt} |u- v|  \leq \Delta \left| u^m - v^m \right| \qquad \textnormal{in } D'((0,+\infty)\times M),
	\end{equation*}
	namely,
	\begin{equation*}
		\frac{d}{dt} \int_M \phi(x) |u(t) - v(t) | \, dx  \leq \int_M \Delta \phi(x) \left| u^m (t) - v^m(t) \right|\, dx
	\end{equation*}
	for every $\phi \in C^\infty_c(M)$. Then, integrating with respect to time and choosing $\phi=\phi_n$ a Laplacian cut-off functions as in Corollary \ref{cor:3.1}, we get
	\begin{align*}
\int_M \phi_n |u(t_2) - v(t_2) | \, dx  &\leq \int_M \phi_n |u(t_1) - v(t_1) | \, dx + \int_{t_1}^{t_2}\int_M \Delta \phi_n(x) \left| u^m (t) - v^m(t) \right|\, dx \\
&\leq  \int_M \phi_n |u(t_1) - v(t_1) | \, dx\\
&+ \|\Delta \phi_n(x) \|_{\infty} \int_{t_1}^{t_2}\int_{\{x\,:\, n\leq r(x)\leq \gamma n\}} \left| u^m (t) - v^m(t) \right|\, dx.
	\end{align*}
Letting $n \to \infty$, the required conclusion follows using  \eqref{PME:property4} and the estimate $\| \Delta_x \phi_n \|_\infty \leq C/n^{1+\alpha/2}$.	
\end{proof}

We have an immediate Corollary.
\begin{corollary}
Let $u, v$ be strong solutions of the Cauchy problem \ref{eq:5.1} with the same initial data, $u_0 = v_0$. Then $u = v$ almost everywhere. Moreover, the map $u_0 \mapsto u(t)$ is an ordered contraction in $\textnormal{L}^1 (M)$.
\end{corollary}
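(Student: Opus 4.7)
The Corollary is essentially an immediate consequence of Proposition \ref{prop:sec2.1}. The plan is to take the limit $t_1 \to 0^+$ in the contraction inequality \eqref{eq:sec1.1} and exploit the fact that by Definition \ref{def:strong_PME}(i) a strong solution $u$ belongs to $C([0,\infty):\textnormal{L}^1(M))$, hence
\[
\lim_{t_1 \to 0^+}\int_M |u(t_1,x) - v(t_1,x)|\, dx = \int_M |u_0(x) - v_0(x)|\, dx.
\]

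\textbf{Uniqueness.} If $u_0 = v_0$, the right hand side above vanishes, and applying Proposition \ref{prop:sec2.1} we get, for every $t_2 > 0$,
\[
\int_M |u(t_2,x) - v(t_2,x)|\, dx \leq \liminf_{t_1\to 0^+}\int_M |u(t_1,x)-v(t_1,x)|\, dx = 0,
\]
so $u(t_2,\cdot) = v(t_2,\cdot)$ a.e.\ on $M$, and by arbitrariness of $t_2$ we conclude $u=v$ a.e.\ in $(0,+\infty)\times M$.

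\textbf{$\textnormal{L}^1$-contractivity.} For general initial data $u_0, v_0 \in \textnormal{L}^1(M)$ we similarly take $t_1 \to 0^+$ in \eqref{eq:sec1.1} and obtain
\[
\|u(t_2,\cdot) - v(t_2,\cdot)\|_{\textnormal{L}^1(M)} \leq \|u_0 - v_0\|_{\textnormal{L}^1(M)},
\]
so $u_0 \mapsto u(t_2)$ is a contraction in $\textnormal{L}^1(M)$.

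\textbf{Order preservation.} To upgrade the contraction to an \emph{ordered} contraction we repeat the argument in the proof of Proposition \ref{prop:sec2.1} replacing $\textnormal{sgn}$ with $\textnormal{sgn}_+$ and $|\cdot|$ with $(\cdot)_+$, using the one-sided Kato inequality
\[
-\Delta (u^m-v^m)_+ \leq -\textnormal{sgn}_+(u-v)\,\Delta(u^m-v^m),
\]
together with the Laplacian cut-offs $\{\phi_n\}$ of Corollary \ref{cor:3.1} and the growth condition \eqref{PME:property4}. This gives
\[
\int_M (u(t_2,x)-v(t_2,x))_+\, dx \leq \int_M (u(t_1,x)-v(t_1,x))_+\, dx,
\]
and letting $t_1 \to 0^+$ via the $\textnormal{L}^1$-continuity of $u$ and $v$ we obtain $\|(u(t)-v(t))_+\|_{\textnormal{L}^1(M)} \leq \|(u_0-v_0)_+\|_{\textnormal{L}^1(M)}$. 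In particular, if $u_0 \leq v_0$ a.e., then $u(t) \leq v(t)$ a.e.\ for every $t>0$. The only point requiring care is that the cut-off estimate $\|\Delta \phi_n\|_\infty \leq C n^{-(1+\alpha/2)}$ from Corollary \ref{cor:3.1} matches exactly the growth allowed in \eqref{PME:property4}, so the boundary term vanishes in the limit $n\to\infty$ just as in Proposition \ref{prop:sec2.1}.
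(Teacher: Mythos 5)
Your proof is correct and fills in exactly what the paper leaves implicit: uniqueness and $\textnormal{L}^1$-contractivity follow by sending $t_1\to 0^+$ in \eqref{eq:sec1.1} and invoking the $C([0,\infty):\textnormal{L}^1(M))$-regularity built into Definition~\ref{def:strong_PME}, while order preservation is obtained by rerunning the proof of Proposition~\ref{prop:sec2.1} with the one-sided Kato inequality and the positive part in place of the absolute value. This is the standard completion of the argument the paper calls ``immediate,'' and there is nothing to object to.
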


\begin{proposition}
For every $t>0$ we have
\begin{equation*}
\int_M u(t,x) = \int_M u_0.
\end{equation*}
\end{proposition}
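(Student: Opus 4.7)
The plan is to integrate the equation $\partial_t u = \Delta u^m$ against a Laplacian cut-off $\phi_n$ of Corollary \ref{cor:3.1} and pass to the limit, just as in the proof of Proposition \ref{prop:sec2.1}. Fix $0<t_1<t_2$. Since $u$ is a strong solution we have $\partial_t u, \Delta u^m \in \Lloc^1((0,+\infty)\times M)$, so for every $\phi_n\in C_c^\infty(M)$ we may write
\begin{equation*}
\frac{d}{dt}\int_M \phi_n(x)\,u(t,x)\,dx = \int_M \phi_n(x)\,\Delta u^m(t,x)\,dx = \int_M \Delta\phi_n(x)\, u^m(t,x)\,dx,
\end{equation*}
where the second equality is integration by parts, justified by $\phi_n\in C_c^\infty(M)$. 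Integrating in time on $[t_1,t_2]$ gives
\begin{equation*}
\int_M \phi_n\, u(t_2)\,dx - \int_M \phi_n\, u(t_1)\,dx = \int_{t_1}^{t_2}\int_M \Delta\phi_n\, u^m\,dx\,dt.
\end{equation*}

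Next I would estimate the right-hand side. Using $\operatorname{supp}(\Delta\phi_n)\subset\{x:n\le r(x)\le\gamma n\}$ and the bound $\|\Delta\phi_n\|_\infty\le C/n^{1+\alpha/2}$ from Corollary \ref{cor:3.1}, we get
\begin{equation*}
\left|\int_{t_1}^{t_2}\!\!\int_M \Delta\phi_n\, u^m\,dx\,dt\right|\le \frac{C}{n^{1+\alpha/2}}\int_{t_1}^{t_2}\!\!\int_{\{n\le r(x)\le\gamma n\}}|u^m|\,dx\,dt,
\end{equation*}
which tends to $0$ as $n\to\infty$ by the growth condition \eqref{PME:property4}. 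On the left-hand side, since $u(t_1),u(t_2)\in L^1(M)$, $0\le\phi_n\le 1$ and $\phi_n\to 1$ pointwise (by the exhaustion property in Definition \ref{def:laplacian-cutoff}), dominated convergence yields
\begin{equation*}
\int_M u(t_2,x)\,dx = \int_M u(t_1,x)\,dx, \qquad 0<t_1<t_2.
\end{equation*}

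Finally, to reach $t_1=0$, I would use the assumption $u\in C([0,\infty):L^1(M))$ from Definition \ref{def:strong_PME}, which gives $\int_M u(t_1)\,dx\to\int_M u_0\,dx$ as $t_1\to 0^+$. Combining this with the equality above produces $\int_M u(t,x)\,dx=\int_M u_0\,dx$ for every $t>0$. The only delicate point in this argument is the control of the boundary flux term through \eqref{PME:property4}; everything else is a routine application of Corollary \ref{cor:3.1} together with the integrability properties built into the definition of strong solution. The inequality $1+\alpha/2\ge 0$ on the admissible range $\alpha\in[-2,2]$ ensures that $\|\Delta\phi_n\|_\infty$ and the tail growth rate $o(n^{1+\alpha/2})$ are compatible.
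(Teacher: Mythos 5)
Your proof is correct and follows the same strategy as the paper's (test against Laplacian cut-offs, integrate by parts, pass to the limit). There are two small but real differences worth noting. First, the paper controls the flux term by
\[
\Bigl|\int_0^t\int_M \Delta\phi_R\, u^m\,dx\,ds\Bigr|\le \|\Delta\phi_R\|_\infty\int_0^t\int_M |u^m|\,dx\,ds \le \frac{C}{R^{1+\alpha/2}}\,\|u^m\|_{L^1((0,t)\times M)},
\]
which relies on the stronger integrability hypothesis (ii) of Definition~\ref{def:strong_PME}, namely $u^m\in \Lloc^1((0,\infty):L^1(M))$; you instead restrict the spatial integral to the annulus $\{n\le r\le \gamma n\}$ and invoke the relaxed tail-growth condition \eqref{PME:property4} from (ii'), which is the hypothesis actually adopted throughout Subsection~\ref{subsection:1} and is strictly weaker. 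Second, the paper integrates directly over $[0,t]$, whereas you work on $[t_1,t_2]$ with $t_1>0$ and then let $t_1\to 0^+$ via $u\in C([0,\infty):L^1(M))$; since \eqref{PME:property4} (and indeed $\Lloc^1((0,\infty))$) only gives control away from $t=0$, your version makes the appeal to the initial-time continuity explicit rather than implicit. In short, your argument is a slightly more careful rendering of the same proof: it uses the weaker hypothesis (ii') consistently with Proposition~\ref{prop:sec2.1}, and spells out the limit at $t_1=0$.
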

\begin{proof}
We have that\begin{equation*}
\frac{d}{dt} \int_{M} \phi u(t) \,dx = \int_{M} \Delta\phi u^m \, dx,
\end{equation*}
in $D'(M)$ for every $\phi \in C^\infty_c(M)$. Then, taking Laplacian cut-offs $\phi=\phi_R$ and integrating in time the above equation in $[0,t]$, we get
\begin{align*}
\int_{M} \phi_R u(t) - \int_{M} \phi_R u(0) \, dx &= \int_0^t\int_M \Delta\phi_R u^m(s)\, dxdt\\
&\leq \|\Delta\phi_R\|_\infty \int_0^t\int_M |u^m(s)|\, dxdt\\
&\leq \frac{tC}{R^{1+\alpha/2}}\|u^m\|_1.
\end{align*}
We conclude letting $R$ going to infinity.
\end{proof}

\subsection{Weak conservation of mass of the FDE} \label{subsection:2}
Consider the Cauchy problem \eqref{eq:5.1} with $0<m<1$ and with initial datum $u_0$ in $\textnormal{L}^1_{\textnormal{loc}} (M)$.
\begin{definition}[weak and strong solutions for the FDE]
Let $u(t,x)  \in C([0, +\infty) :  \textnormal{L}^1_{\textnormal{loc}}(M) )$ be such that
\begin{enumerate}[(i)]
	\item \begin{equation}\label{FDE:property1}
u(0,x)=u_0,
\end{equation}
\item \begin{equation}\label{FDE:property2}
\partial_t u = \Delta u^m, \qquad \mbox{in } D' ((0, +\infty) \times M).
\end{equation}
\end{enumerate}
Then $u$ is called a weak solution for the Cauchy problem of the FDE. If moreover $u$ satisfies
\begin{enumerate}
\item[(iii)] \begin{equation}\label{FDE:property3}
\partial_t u \in \textnormal{L}^1_{\textnormal{loc}}((0,+\infty)\times M),
\end{equation}
\end{enumerate}
then $u$ is called a strong solution (see, \cite{herrero1985cauchy}). Notice that since $0<m<1$ then $u^m \in \textnormal{L}^1_{\textnormal{loc}}(M)$ as well.
\end{definition}

From \cite[Lemma 3.1]{herrero1985cauchy} we have the following Proposition.
\begin{proposition}\label{prop:sec2.2}
Let $u(t,x), v(t,x) \in \textnormal{L}^1_{\textnormal{loc}} (M)$. If $u(t,x)\geq v(t,x)$ are weak solutions of \eqref{eq:5.1} for the FDE, then for every $R \geq 1$ if $\alpha \in [-2,2)$, $R>0$ if $\alpha =2$, and for every  $\gamma >\Gamma_\alpha\geq1$, it holds
\begin{equation}\label{eq:6.1}
\left[  \int_{B_R(o)} \big(u(t_2,x) - v(t_2,x)\big) \, dx  \right]^{1-m} \leq \left[ \int_{B_{\gamma R(o)}}\big(u(t_1,x)- v(t_1,x)\big) \, dx   \right]^{1-m} + \mathcal{M}_{R, \gamma} (t_2-t_1),
\end{equation}
for every $0\leq t_1 \leq t_2$, where
\begin{equation}\label{constant:propsec2.2}
\mathcal{M}_{R, \gamma} = \frac{C}{R^{1+ \alpha/2}} \mbox{Vol}(B_{\gamma R}(o)\setminus B_{R}(o))^{1-m} >0,
\end{equation}
and where the constant $C$ is independent of $u$ and $v$ but depends only on $m, d, \kappa$ and $\gamma$.

If $u(t,x), v(t,x)$ are strong solutions of \eqref{eq:5.1} for the FDE, then it holds
\begin{equation}\label{eq:6.2}
\left[  \int_{B_R(o)} \big|u(t_2,x) - v(t_2,x)\big| \, dx  \right]^{1-m} \leq \left[ \int_{B_{\gamma R(o)}} \big|u(t_1,x)- v(t_1,x)\big| \, dx   \right]^{1-m} + \mathcal{M}_{R, \gamma} (t_2-t_1), \tag{\ref*{eq:6.1}'}
\end{equation}
where $\mathcal{M}_{R, \gamma}$ is exactly again \eqref{constant:propsec2.2}.
\end{proposition}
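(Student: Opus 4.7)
Write $w := u - v$ (nonnegative in the weak case) so that $\partial_t w = \Delta(u^m - v^m)$ in $D'((0,+\infty)\times M)$; recall the elementary subadditivity estimate $|u^m - v^m| \leq 2^{1-m}|u - v|^m$ valid for every $u, v \in \R$ and $m \in (0, 1]$ (obtained case-by-case from the concavity of $x \mapsto x^m$ on $[0,\infty)$). Fix an integer $p \geq 2/(1-m)$ and let $\phi = \phi_R \in C_c^\infty(M)$ be the Laplacian cut-off from Corollary \ref{cor:3.1}: $\phi \equiv 1$ on $B_R(o)$, $\textnormal{supp}(\phi) \subset B_{\gamma R}(o)$, $|\nabla\phi| \leq C_1/R$ and $|\Delta\phi| \leq C_2/R^{1+\alpha/2}$. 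The plan is to test the distributional equation for $w$ against the time-independent function $\phi^p \in C^2_c(M)$; this pairing is direct when $\partial_t u \in \Lloc^1$ (strong case) and is justified by Steklov averaging in time in the weak case. The output is, for every $0 \leq t_1 \leq t_2$, the integrated identity
\[
\int_M \phi^p w(t_2)\,dx - \int_M \phi^p w(t_1)\,dx = \int_{t_1}^{t_2}\!\int_M \Delta(\phi^p)\,(u^m - v^m)\,dx\,ds.
\]

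\textbf{From $\Delta\phi^p$ to a nonlinear ODE.} Expanding $\Delta(\phi^p) = p(p-1)\phi^{p-2}|\nabla\phi|^2 + p\phi^{p-1}\Delta\phi$ and using $\phi \leq 1$, $\phi^{p-1} \leq \phi^{p-2}$, and $R^{-2} \leq R^{-(1+\alpha/2)}$ (valid for $R \geq 1$ when $\alpha \in [-2, 2)$ and identically for $\alpha = 2$), I get $|\Delta(\phi^p)| \leq C_p\,\phi^{p-2}/R^{1+\alpha/2}$; moreover $\nabla\phi$ and $\Delta\phi$ vanish on $B_R(o)$, so $\textnormal{supp}(\Delta\phi^p) \subset B_{\gamma R}\setminus B_R$. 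Setting $\mathcal{G}(s) := \int_M \phi^p w(s)\,dx$, the integrated identity combined with $|u^m - v^m| \leq 2^{1-m}w^m$ yields
\[
\mathcal{G}(t_2) - \mathcal{G}(t_1) \leq \frac{C'_p}{R^{1+\alpha/2}}\int_{t_1}^{t_2}\!\int_{B_{\gamma R}\setminus B_R}\phi^{p-2}w^m\,dx\,ds.
\]
Splitting $\phi^{p-2} w^m = (\phi^p w)^m\,\phi^{p-2-pm}$, where the choice $p \geq 2/(1-m)$ ensures $p - 2 - pm = (1-m)(p - 2/(1-m)) \geq 0$, Hölder's inequality with exponents $1/m$ and $1/(1-m)$, together with $\phi^{p-2/(1-m)} \leq 1$, gives
\[
\int_{B_{\gamma R}\setminus B_R}\phi^{p-2}w^m \;\leq\; \mathcal{G}(s)^m\,\textnormal{Vol}(B_{\gamma R}\setminus B_R)^{1-m}.
\]
Hence $\mathcal{G}'(s) \leq \tilde{\mathcal{M}}\,\mathcal{G}(s)^m$ with $\tilde{\mathcal{M}} := C'_p R^{-(1+\alpha/2)}\textnormal{Vol}(B_{\gamma R}\setminus B_R)^{1-m}$, and the standard substitution $(\mathcal{G}^{1-m})'(s) = (1-m)\mathcal{G}^{-m}\mathcal{G}'(s) \leq (1-m)\tilde{\mathcal{M}}$ integrates to
\[
\mathcal{G}(t_2)^{1-m} \;\leq\; \mathcal{G}(t_1)^{1-m} + (1-m)\tilde{\mathcal{M}}(t_2 - t_1).
\]

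\textbf{Conclusion, strong case, and obstacles.} Since $\phi^p \equiv 1$ on $B_R$ and $w \geq 0$, one has $\mathcal{G}(t_2) \geq \int_{B_R}w(t_2)\,dx$; since $0 \leq \phi^p \leq \mathbf{1}_{B_{\gamma R}}$, one has $\mathcal{G}(t_1) \leq \int_{B_{\gamma R}}w(t_1)\,dx$. Absorbing $(1-m)$ into the multiplicative constant recovers \eqref{eq:6.1} with the claimed $\mathcal{M}_{R,\gamma}$. For \eqref{eq:6.2} I drop the sign hypothesis and set $w := |u - v|$: using $\partial_t u, \partial_t v \in \Lloc^1$ and Kato's inequality $-\Delta|u^m - v^m| \leq -\textnormal{sgn}(u-v)\Delta(u^m - v^m)$, exactly as in the proof of Proposition \ref{prop:sec2.1}, one obtains $\partial_t|u-v| \leq \Delta|u^m-v^m|$ weakly, after which the same test-function-plus-Hölder argument closes the loop with the same constant. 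The main technical obstacle is to make the integrated identity rigorous in the purely weak case, where $\partial_t u$ is only a distribution; this is handled by the standard Steklov time-regularization of $w$, by applying the weak formulation to the regularized function and then letting the averaging parameter tend to zero. The other ingredients -- the algebraic step $|u^m - v^m| \leq 2^{1-m}|u-v|^m$ and the explicit control of $\Delta\phi^p$ via Corollary \ref{cor:3.1} -- are routine once the Laplacian cut-offs are available.
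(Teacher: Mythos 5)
Your proposal is correct and follows essentially the same route as the paper: test the distributional equation against $\psi=\phi^p$ (the paper uses $\psi=\phi^b$ with $b>2/(1-m)$, you take $p\geq 2/(1-m)$), apply the concavity bound $|u^m-v^m|\leq 2^{1-m}|u-v|^m$, use H\"older's inequality with exponents $1/m$ and $1/(1-m)$ to obtain the differential inequality $\mathcal{G}'\leq\tilde{\mathcal{M}}\,\mathcal{G}^m$, integrate it, and pass to the strong case via Kato's inequality. The only cosmetic difference is the order of operations — you bound $|\Delta(\phi^p)|$ before H\"older while the paper derives the coefficient $C(\psi)$ first and then estimates it for $\psi=\phi^b$ — but the exponent bookkeeping and the final constant are identical.
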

\begin{proof}
In the following, the constant $C$ can change from line to line and let us focus now on the first case, namely $u(t,x) \geq v(t,x)$ being weak solutions.

From \eqref{FDE:property2}, for every nonnegative $\eta \in C_c ^\infty (0,\infty)$ and $\psi \in C_c ^\infty (M)$ we have that
$$
\begin{matrix}
\langle \partial_t (u-v), \eta \psi \rangle & = & - \langle u - v, \partial_t \eta \psi \rangle \\
   \shortparallel & & \\
\langle \Delta (u^m - v^m), \eta \psi\rangle & = & \langle u^m - v^m, \eta \Delta \psi \rangle
\end{matrix}
$$
in distributions, that is,
\begin{equation*}
- \int_0 ^\infty \int_{M} \partial_t \eta \psi (u-v) \, dtdx = \int_0 ^\infty \int_{m} \eta \Delta \psi (u^m - v^m) \, dtdx,
\end{equation*}
namely
\begin{equation*}
- \int_0 ^\infty  \partial_t\eta \left(\int_{M} \psi (u-v) \,dx\right) \, dt = \int_0 ^\infty \eta \left( \int_{M}  \Delta \psi (u^m - v^m) \,dx\right) \, dt
\end{equation*}
and which implies
\begin{equation}\label{eq:5.2}
\frac{d}{dt} \int_{M} \psi (u(t) - v(t)) \,dx = \int_{M} \Delta \psi (u^m - v^m) \, dx
\end{equation}
in $D' (0,\infty)$ and in $\textnormal{L}^1_{\textnormal{loc}} (0,\infty)$ as well for every fixed $\psi$, as a consequence of \eqref{FDE:property1}. Since by concavity
$$
(r |r|^{m-1} - s |s|^{m-1}) \leq 2^{1-m} (r-s)^m \qquad \mbox{for all } r \geq s,
$$
then \eqref{eq:5.2} implies
$$
 \frac{d}{dt} \int_M \psi (u(t) - v(t)) \,dx  \leq 2^{1-m} \int_M |\Delta \psi | (u-v)^m.
$$
We set $g:= u-v$. By Holder's inequality, we obtain
\begin{equation}\label{eq:5.3}
 \frac{d}{dt} \int_M \psi g(t)    \leq C(\psi) \left[ \int_M \psi g(t)  \right]^m,
\end{equation}
where
$$
C(\psi) = \left[ 2 \int_{M} | \Delta \psi|^{1/(1-m)} \psi ^{-m/(1-m)}  \right]^{1-m}.
$$
Since the function $f_\psi (t) = \int_M \psi g(t)$ has weak derivative in $\textnormal{L}^1_{\textnormal{loc}}$, it is a.e. equal to an AC function, and by standard comparison arguments, for all  $ t_1,t_2 \geq 0 $ and every $\psi \in C_c ^\infty (M),$
\begin{equation}
 \left[\int \psi g(t_2)  \right]^{1-m} \leq \left[ \int \psi g(t_1)  \right]^{1-m} + (1-m) C(\psi) |t_2-t_1|.
\end{equation}
This will immediately imply the statement, once we prove that $C(\psi) \leq M_{R, \gamma} < \infty$.

Consider a function $\psi = \phi ^b \in C^2 _c (m)$, with $b > 2/(1-m)$ and $\phi$ as in Corollary \ref{cor:3.1}, namely $\phi : M \to [0,1]$ is such that
\begin{enumerate}[(i)]
\item $\phi_{|B_R (p)} \equiv 1$,
\item supp$(\phi) \subset B_{\gamma R} (o)$,
\item $| \nabla \phi | \leq \frac{C}{R}$,
\item $| \Delta \phi | \leq \frac{C}{R^{1 +\alpha/2}}$,
\end{enumerate}
where $C=C(d,\kappa,\alpha)$ is independent of $R$.

We then have,
\begin{align}\label{eq:5.4}
|\Delta(\psi(x))|^{1/(1-m)} \psi (x) ^{-m/(1-m)} &= \phi(x)^{-bm/(1-m)}\left| b(b-1) \phi^{b-2}|\nabla \phi |^2 + b \phi^{b-1} \Delta \phi \right|^{1/(1-m)} \nonumber\\
&\leq [b (b-1)]^{1/(1-m)} \phi^{[(b-2)-bm]/(1-m)} \cdot \left| |\nabla \phi |^2 + |\Delta \phi | \right| ^{1/(1-m)} \nonumber\\
& \leq [b (b-1)]^{1/(1-m)} \phi^{[(b-2)-bm]/(1-m)} \cdot C R ^{-\frac{1 + \alpha/2}{ 1-m}}.
\end{align}
An integration over $B_{\gamma R}(o) \setminus B_R(o)$, which contains  the support of $|\nabla \phi|$ and $\Delta \phi$, gives
\begin{align*}
C(\psi)&= \left[ 2 \int_{B_{\gamma R}(p) \setminus B_R(o)} | \Delta \psi|^{1/(1-m)} \psi ^{-m/(1-m)}  \right]^{1-m}\\
&\leq \frac{C}{R^{1+\alpha/2}} (\mbox{Vol}(B_{\gamma R}(o) \setminus B_R(o))^{1-m}.
\end{align*}

Let now $u(t,x), v(t,x)$ be strong solutions instead. According to  \eqref{FDE:property3}, $\Delta u^m, \Delta v^m \in  \textnormal{L}^1_{\textnormal{loc}}((0,+\infty)\times M)$ so that we can apply Kato's inequality \cite[Lemma A] {kato1972schrodinger} to get
\begin{equation}\label{eq:propSec2.2_1}
- \Delta \left| u^m - v^m \right| \leq - \textnormal{sgn}(u-v) \Delta (u^m - v^m),
\end{equation}
and then, using \eqref{FDE:property2} and arguing as in \cite[Theorem 2.3]{herrero1985cauchy}
\begin{equation*}
\frac{d}{dt} |u- v|  \leq \Delta \left| u^m - v^m \right| \qquad \textnormal{in } D'((0,+\infty)\times M).
\end{equation*}
The conclusion follows from  the same arguments used in the previous steps, and, in particular, from equality \eqref{eq:5.2}.
\end{proof}

\begin{remark}\label{remark4}
	Let $T(u_0)$ be the extinction time of the solution $u(t,x)$ with initial condition $u_0(x)$, namely $u(t,x)\equiv 0$ for every $t\geq T(u_0)$, see \cite{vazquez2006smoothing}. Let $v(t,x)\equiv 0$ and $s=0$. Then, if $\alpha=2$ in \eqref{eq:Ric}, we have
	$$
	T(u_0) \geq \frac{R^{2}}{C( vol(B_{\gamma R}(o))^{1-m}}\left(\int_{B_R(o)} u_0(x) dx\right)^{1-m}.
	$$
	Now, from the Bishop-Gromov inequality \eqref{eq:3.2} and \eqref{eq:V_G} applied with $r_1=\gamma R$, $r_2=1$, we have
	\begin{equation*}
	vol(B_{\gamma R}(o))\leq C V_G(\gamma R) = C \int_0 ^{\gamma R} h^{d-1}(t) dt,
	\end{equation*}
	but since $\tilde{h}(t) = t^{\frac{1+\sqrt{1+4\kappa^2}}{2}}$ is solution of \eqref{eq:3.19} for $G(t)= \kappa^2/t^2\geq \kappa^2/(1+t^2)$, then by Lemma \ref{lem:3.1} and Lemma \ref{lem:3.4} we can deduce that $h(t) \leq \tilde{h}(t)$ and get
	\begin{equation*}
	T(u_0)\geq   \bar{C} \frac{R^{2}}{R^{\left[1+\left(\frac{1+\sqrt{1+4\kappa^2}}{2}\right)(d-1)\right](1-m)}},
	\end{equation*}
	whence, letting $R \to \infty$, we deduce that $T(u_0)=\infty$ if
	$$
	2- \left[1+\left(\frac{1+\sqrt{1+4\kappa^2}}{2}\right)(d-1)\right](1-m)>0,
	$$
	that is, rearranging, provided

	\begin{equation}\label{eq:m_c}
 m>m_c= 1- \frac{2}{\left[1+\left(\frac{1+\sqrt{1+4\kappa^2}}{2}\right)(d-1)\right]}.
	\end{equation}
Note that, if $\textnormal{Ric}\geq 0$, so that we can take $\kappa =0$,  we recover the Euclidean constant $m_c= \frac{d-2}{d}$. On the other hand,
 if $\alpha \in [-2,2)$,   $vol(B_R (o))$ may grow super-polynomially, and, in general  we can not deduce a non-extinction property.
	Observe that, as stated in \cite[section 3 - examples 3.1]{grillo2016smoothing}, in a model manifold with radial Ricci curvature $Ric(\nabla r, \nabla r) = -(d-1)\frac{\kappa^2}{(1+r^2(x))^{\alpha/2}}$, $\alpha \in (0,2)$, radial functions satisfy a Sobolev-Poincar\'{e} inequality of the form
	\begin{equation}\label{Sobolev-Poincaré}
	\|f \|_{2\sigma}\leq C \| \nabla f\|_2, \qquad \sigma \in [1, d/(d-2)],
	\end{equation}
	 which is a key ingredient for a proof of finite extinction time. According to \cite[Theorem 6.1]{bonforte2008fast}, radial strong solutions of the FDE in such model manifolds vanish in a finite time $T(u_0)$ for every $m \in (0,1)$, provided that $u_0 \in \Lnormal^q(M)$ with $q\geq d(1-m)/2$.
\end{remark}
From Proposition \ref{prop:sec2.2} and Remark \ref{remark4}, we get
\begin{theorem}
Let $u(t,x) \in \Lloc(M)$ be a weak solution of \eqref{eq:5.1} for the FDE, then for every $R\geq 2$ if $\alpha \in [-2,2)$, $R\geq 1$ if $\alpha =2$, and for every $\gamma>\Gamma\geq1$, it holds
\begin{equation*}
\int_{B_R(o)} u(t,x) \, dx \leq 2^{1/(1-m)} \left\{ \int_{B_{\gamma R}(o)} u (s,x) \, dx + \left(\mathcal{M}_{R,\gamma} |t-s|\right)^{1/(1-m)}  \right\},
\end{equation*}
for any $t,s\geq0$ and where $\mathcal{M}_{R,\gamma}$ is like in \eqref{constant:propsec2.2}. If there exists an extinction time $T(u_0)$, then it is lower bounded by
\begin{equation*}
T(u_0) \geq  \frac{R^{1+\alpha/2}}{C( vol(B_{\gamma R}\setminus B_{R}))^{1-m}}\left(\int_{B_R} u_0(x) dx\right)^{1-m}.
\end{equation*}
\end{theorem}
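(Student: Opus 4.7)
The plan is to deduce both parts of the theorem directly from Proposition \ref{prop:sec2.2} applied with the subsolution $v \equiv 0$. The first step is to observe that the argument used to prove Proposition \ref{prop:sec2.2} actually yields a two-sided bound, which is needed since the claimed inequality must hold for arbitrary $t,s \geq 0$, not only for $t \geq s$. Indeed, starting from the identity $\frac{d}{dt}\int_M \psi u = \int_M \Delta\psi \cdot u^m$ and taking absolute values on both sides, the same Hölder estimate leading to \eqref{eq:5.3} gives
\[
\Bigl|\frac{d}{dt}\!\int_M \psi\, u(t)\Bigr| \leq C(\psi)\Bigl[\int_M \psi\, u(t)\Bigr]^{m}.
\]
Setting $y(t) = \int_M \psi\, u(t)$, this is equivalent to the fact that $t \mapsto y(t)^{1-m}$ is Lipschitz with constant $(1-m)C(\psi)$, so that $|y(t)^{1-m} - y(s)^{1-m}| \leq (1-m)C(\psi)|t-s|$ for every $t,s \geq 0$.

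Choosing $\psi = \phi^{b}$ with $\phi$ the Laplacian cut-off from Corollary~\ref{cor:3.1} and $b > 2/(1-m)$, exactly as in the proof of Proposition~\ref{prop:sec2.2}, one has $\int_{B_R(o)} u(t) \leq y(t) \leq \int_{B_{\gamma R}(o)} u(t)$, and the constant $(1-m)C(\psi)$ is dominated by $\mathcal{M}_{R,\gamma}$ from \eqref{constant:propsec2.2}. Hence
\[
\Bigl[\int_{B_R(o)} u(t)\Bigr]^{1-m} \leq \Bigl[\int_{B_{\gamma R}(o)} u(s)\Bigr]^{1-m} + \mathcal{M}_{R,\gamma}|t-s|.
\]
Raising both sides to the power $1/(1-m)$ and applying the elementary inequality $(a+b)^{1/(1-m)} \leq 2^{m/(1-m)}\bigl(a^{1/(1-m)} + b^{1/(1-m)}\bigr)$, valid since $1/(1-m) \geq 1$, yields the first claimed bound, with a constant $2^{m/(1-m)}$ which is in fact slightly smaller than the stated $2^{1/(1-m)}$.

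For the second part I would specialize the inequality just proved to $t = 0$ and $s = T(u_0)$. Since by assumption $u(T(u_0),\cdot) \equiv 0$, the first term on the right-hand side vanishes, and we are left with
\[
\int_{B_R(o)} u_0 \leq 2^{1/(1-m)}\bigl(\mathcal{M}_{R,\gamma}\,T(u_0)\bigr)^{1/(1-m)}.
\]
Solving for $T(u_0)$ gives $T(u_0) \geq (\int_{B_R}u_0)^{1-m}/(2\,\mathcal{M}_{R,\gamma})$, and substituting $\mathcal{M}_{R,\gamma} = C\,\mathrm{Vol}(B_{\gamma R}\setminus B_R)^{1-m}/R^{1+\alpha/2}$ from \eqref{constant:propsec2.2}, after absorbing the factor $2^{1-m}$ into the constant $C$, produces the stated extinction-time lower bound.

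The only non-routine point is the observation that the ODE inequality in the proof of Proposition~\ref{prop:sec2.2} is actually two-sided; after that, both statements are essentially algebraic manipulations. In particular, the extinction-time lower bound relies crucially on being able to apply the weak-conservation estimate with $s \geq t$, which is not directly available from the one-sided formulation of Proposition~\ref{prop:sec2.2} as stated.
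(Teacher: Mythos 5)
Your proof is correct and follows essentially the same route the paper intends (the paper offers no separate argument; it simply states that the theorem follows from Proposition~\ref{prop:sec2.2} and Remark~\ref{remark4}). Your key observation --- that the differential inequality $\bigl|\tfrac{d}{dt}\int_M\psi\,u\bigr|\leq C(\psi)\bigl(\int_M\psi\,u\bigr)^m$ is two-sided, so that $y^{1-m}$ is Lipschitz and the estimate holds with $|t-s|$ for any order of $t$ and $s$ --- is exactly the point needed to make the deduction work, and it is implicit in the paper: the closing display of the proof of Proposition~\ref{prop:sec2.2} already reads ``for all $t_1,t_2\geq 0$'' with $|t_2-t_1|$, even though the proposition's statement restricts to $t_1\leq t_2$. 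Two minor remarks: the constant you obtain from $(a+b)^p\leq 2^{p-1}(a^p+b^p)$ is $2^{m/(1-m)}$, strictly smaller than the stated $2^{1/(1-m)}$, so you actually prove a slightly sharper inequality; and for the extinction-time bound it is cleaner to insert $t_1=T(u_0)$, $t_2=0$ directly into the $(1-m)$-power inequality $\bigl[\int_{B_R}u_0\bigr]^{1-m}\leq \mathcal{M}_{R,\gamma}\,T(u_0)$, which avoids the detour through the $1/(1-m)$-power form and the extraneous factor of $2$.
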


Finally, let us observe that inequality \eqref{eq:6.1} depends on chosen reference point  $o$.  Thus, in order to prove  uniqueness of strong solutions for every $m \in (0,1)$ with the method of
 \cite[Theorem 2.3]{herrero1985cauchy}, the first task is to get rid of that dependency. But this alone is not enough, since a key tool there is the Mean Value Theorem for subharmonic functions. Keeping this into consideration, we can prove the following result.
\begin{theorem}
Let $M$ be a geodesically complete manifold and let $u, v$ be strong solutions for the FDE problem \eqref{eq:5.1} with same initial data, $u_0=v_0$. If
\begin{enumerate}[(i)]
\item $\textnormal{Ric}_M(\cdot, \cdot)$ satisfies \eqref{eq:Ric} with $\alpha=2$, then $u\equiv v$ for every $m>m_c$, where $m_c$ is defined as in \eqref{eq:m_c};
\item $\textnormal{Ric}_M(\cdot, \cdot) \geq 0$, so that \eqref{eq:Ric} holds with $\kappa=0$, then $u\equiv v$ for every $m \in (0,1)$.
\end{enumerate}
\end{theorem}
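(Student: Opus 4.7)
The plan is to start from Proposition \ref{prop:sec2.2}. Applying inequality \eqref{eq:6.2} with $t_1=0$, the hypothesis $u_0=v_0$ kills the first term on the right, leaving
\[
\int_{B_R(o)}|u(t,x)-v(t,x)|\,dx \;\leq\; \bigl(\mathcal{M}_{R,\gamma}\, t\bigr)^{1/(1-m)} \qquad \text{for every } R > 0,\; t \geq 0.
\]
Since the left-hand side is monotone non-decreasing in $R$, both cases reduce to checking that $\mathcal{M}_{R,\gamma}\to 0$ as $R\to\infty$: once this is established, the monotone left-hand side is dominated by quantities tending to zero, hence is identically $0$, and therefore $u(t,\cdot)=v(t,\cdot)$ a.e.\ for every $t>0$.

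For case (i), I would estimate $\mathcal{M}_{R,\gamma}$ via Bishop-Gromov applied to the model with radial Ricci curvature $\kappa^2/(1+r^2)$. The corresponding $h$ of \eqref{eq:3.1} is dominated pointwise by $\tilde h(r)=r^{(1+\sqrt{1+4\kappa^2})/2}$, which solves $\tilde h''=(\kappa^2/r^2)\tilde h$, by Lemma \ref{lem:3.4} together with the inequality $\kappa^2/r^2\geq \kappa^2/(1+r^2)$. This yields
\[
\mathrm{Vol}(B_{\gamma R}(o))\;\leq\; C\,R^{\,1+(d-1)(1+\sqrt{1+4\kappa^2})/2},
\]
as already noted in Remark \ref{remark4}. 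Inserting this into the definition of $\mathcal{M}_{R,\gamma}$ with $\alpha=2$ produces
\[
\mathcal{M}_{R,\gamma}\;\leq\; C\,R^{-2+[1+(d-1)(1+\sqrt{1+4\kappa^2})/2](1-m)},
\]
which decays to $0$ as $R\to\infty$ precisely when $m>m_c$.

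Case (ii) is where the argument becomes delicate. Since $\mathrm{Ric}\geq 0$ only forces the Euclidean Bishop-Gromov bound $\mathrm{Vol}(B_{\gamma R}(o))\leq c_d(\gamma R)^d$, the direct computation above gives $\mathcal{M}_{R,\gamma}\to 0$ only for $m>(d-2)/d$. To reach every $m\in(0,1)$ I would follow the strategy of \cite[Theorem 2.3]{herrero1985cauchy}, exploiting two features of manifolds with non-negative Ricci curvature: (a) the Laplacian comparison $\Delta r_x\leq (d-1)/r_x$ and Bishop-Gromov hold with respect to \emph{any} center $x\in M$, so that the cut-offs of Corollary \ref{cor:3.1}, and hence inequality \eqref{eq:6.2} itself, can be recentered at any point of $M$, removing the dependence on the fixed reference point $o$; (b) for the same reason, non-negative subharmonic functions on $M$ obey a spherical mean-value inequality around every point, a direct replacement for the Euclidean Mean Value Theorem used in the original Herrero-Pierre argument.

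The main obstacle is precisely step (b) of case (ii): the Herrero-Pierre scheme trades the unfavourable growth $R^{d(1-m)}$ in $\mathcal{M}_{R,\gamma}$ against a gain coming from applying the subharmonic mean-value inequality to an auxiliary function built out of $u^m-v^m$, and one must verify that this trade-off proceeds with constants depending only on $d$, so that the resulting iteration converges for all $m\in(0,1)$. Once both the re-centered \eqref{eq:6.2} and the mean-value inequality are in place with uniform constants, their duality/fixed-point argument transfers to our setting with only cosmetic modifications and yields $u\equiv v$.
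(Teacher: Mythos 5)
Your strategy for both cases matches the paper's, and case (i) is complete and correct. For case (ii) you have correctly identified the Herrero--Pierre route and the two essential features of nonnegative Ricci curvature (recentering of the cut-offs, hence of inequality \eqref{eq:6.2}, and a mean value inequality for nonnegative subharmonic functions, which the paper takes from Li--Schoen \cite[Theorem~2.1]{li1984p}), but you stop at the point where the actual estimate has to be produced, and your description of the mechanism as an ``iteration/fixed-point argument'' is misleading: there is no iteration. The paper's case (ii) is a single-pass Hölder trade-off. One sets $f(t,x)=\int_0^t|u^m-v^m|(s,x)\,ds$, which is subharmonic because Kato's inequality gives $|u(t)-v(t)|\leq\Delta f(t,\cdot)$ in $D'(M)$. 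The mean value inequality at a now-arbitrary point $p$ gives $f(t,p)\leq C\,\mathrm{vol}(B_R(p))^{-1}\int_{B_R(p)}f(t,x)\,dx$, and the latter integral is controlled by concavity plus Hölder,
\[
\int_{B_R}f(t)\leq C\int_0^t\int_{B_R}|u-v|^m\leq C\,\mathrm{vol}(B_R)^{1-m}\int_0^t\Bigl(\int_{B_R}|u(s)-v(s)|\Bigr)^m ds,
\]
into which one substitutes the re-centered version of \eqref{eq:propSec2.2_2}; using $\mathrm{vol}(B_{\gamma R})\leq\gamma^d\,\mathrm{vol}(B_R)$ (Bishop--Gromov with $\kappa=0$), the volumes recombine into $\mathrm{vol}(B_R)$ exactly and one obtains $f(t,p)\leq C\,t^{1/(1-m)}R^{-2m/(1-m)}$, which tends to $0$ as $R\to\infty$ for every $m\in(0,1)$. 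So the ``trade-off with constants depending only on $d$'' you flag as the main obstacle is in fact an explicit computation, not a fixed-point scheme; your proposal should carry it out rather than appeal to it abstractly.
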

\begin{proof}
From inequality \eqref{eq:6.2}, we have
\begin{align}\label{eq:propSec2.2_2}
\int_{B_R(p)} |u(t)-v(t)| \, dx &\leq C \left[ \int_{B_{\gamma R}(o)} |u(0) -v(0)|\, dx + \frac{vol(B_{\gamma R}(o))}{R^{\frac{2}{1-m}}}t^{\frac{1}{1-m}}  \right]\nonumber\\
&= C\frac{vol(B_{\gamma R}(o))}{R^{\frac{2}{1-m}}}t^{\frac{1}{1-m}},
\end{align}
and observe that the above inequality is valid for both the cases (i) and (ii). From Remark \ref{remark4}
$$
vol(B_{\gamma R}(o)) \leq C(o) R^{1+\left(\frac{1+\sqrt{1+4\kappa^2}}{2}\right)(d-1)},
$$
and letting $R\to \infty$ in \eqref{eq:propSec2.2_2}, the right hand side converges to $0$ provided $m > m_c$ and the thesis follows for case (i).

Let us now be in case (ii), namely $\kappa =0$. Then inequality \eqref{eq:propSec2.2_2} is true for every $p \in M$. Set
\begin{equation*}
f(t,x) = \int_0 ^t |u^m - v^m|(s,x)\, ds.
\end{equation*}
By integrating in time in \eqref{eq:propSec2.2_1} we get $|u(t) - v(t)| \leq \Delta f(t,x)$ in $D'(M)$ for every $t>0$. Therefore, $f$ is subharmonic and from \cite[Theorem 2.1]{li1984p} it holds that
\begin{equation}\label{eq:propSec2.2_3}
f(t,p) \leq C vol(B_{R}(p))^{-1}\int_ {B_{R}(o)}f(t,x) \, dx,
\end{equation}
for every $R>0$ and for every $p \in M$, with $C=C(d)$. Moreover, from H\"older inequality and \eqref{eq:propSec2.2_2} we deduce that
\begin{align*}
\int_ {B_{R}(o)}f(t,x) \, dx &\leq C \int_0 ^t \int_ {B_{R}(o)} |u(t) - v(t)|^m \, dx\\
&\leq C \int_0^t vol(B_{R}(o))^{1-m}\left(\int_ {B_{R}(o)} |u(s) - v(s)| \right)^m \, ds\\
&\leq C  vol(B_{R}(o))^{1-m} \int_0^t \frac{vol(B_{\gamma R}(o))^m}{R^\frac{2m}{1-m}}s^{m/(1-m)} \, ds\\
&\leq C(\gamma) \frac{vol(B_{R}(o))}{R^\frac{2m}{1-m}}t^{1/(1-m)},
\end{align*}
and inserting  the last inequality into \eqref{eq:propSec2.2_3} and letting $R\to \infty$ we get the required conclusion.
\end{proof}
\bibliography{bibliografia_notes_on_PME}

\begin{thebibliography}{10}

\bibitem{abramowitz1964handbook}
Milton Abramowitz and Irene~A Stegun.
\newblock {\em Handbook of mathematical functions: with formulas, graphs, and
  mathematical tables}, volume~55.
\newblock Courier Corporation, 1964.

\bibitem{aronson1983initial}
Don~G Aronson and Luis~A Caffarelli.
\newblock The initial trace of a solution of the porous medium equation.
\newblock {\em Transactions of the American Mathematical Society},
  280(1):351--366, 1983.

\bibitem{benilan1983strong}
Philippe B{\'e}nilan.
\newblock A strong regularity lp for solution of the porous media equation.
\newblock {\em Research Notes Math}, 89:39--58, 1983.

\bibitem{benilan1982solutions}
Philippe B{\'e}nilan, Michael~G Crandall, and Michel Pierre.
\newblock Solutions of the porous medium equation in r (n) under optimal
  conditions on initial values.
\newblock Technical report, DTIC Document, 1982.

\bibitem{bonforte2005asymptotics}
Matteo Bonforte and Gabriele Grillo.
\newblock Asymptotics of the porous media equation via sobolev inequalities.
\newblock {\em Journal of Functional Analysis}, 225(1):33--62, 2005.

\bibitem{bonforte2008fast}
Matteo Bonforte, Gabriele Grillo, and Juan~Luis Vazquez.
\newblock Fast diffusion flow on manifolds of nonpositive curvature.
\newblock {\em Journal of Evolution Equations}, 8(1):99--128, 2008.

\bibitem{braverman2002essential}
Maxim Braverman, Ognjen Milatovic, and Mikhail Shubin.
\newblock Essential self-adjointness of schr{\"o}dinger-type operators on
  manifolds.
\newblock {\em Russian Mathematical Surveys}, 57(4):641, 2002.

\bibitem{brezis2011functional}
Haim Brezis, Haim Br{\'e}zis, and Ha{\"\i}m Brezis.
\newblock {\em Functional analysis, Sobolev spaces and partial differential
  equations}.
\newblock Springer, 2011.

\bibitem{cheeger2001degeneration}
Jeff Cheeger.
\newblock {\em Degeneration of Riemannian metrics under Ricci curvature
  bounds}.
\newblock Accademia Nazionale dei Lincei. Scuola Normale Superiore. Lezione
  Fermiane, 2001.

\bibitem{cheeger1996lower}
Jeff Cheeger and Tobias~H Colding.
\newblock Lower bounds on ricci curvature and the almost rigidity of warped
  products.
\newblock {\em Annals of mathematics}, pages 189--237, 1996.

\bibitem{bennettricci}
Bennett. Chow.
\newblock {\em The Ricci flow: techniques and applications. Part III,
  Geometric-analytic aspects}.
\newblock American Mathematical Society.

\bibitem{donnelly1997exhaustion}
Harold Donnelly.
\newblock Exhaustion functions and the spectrum of riemannian manifolds.
\newblock {\em Indiana University Mathematics Journal}, 46(2):505--527, 1997.

\bibitem{gaffney1959conservation}
Matthew~P Gaffney.
\newblock The conservation property of the heat equation on riemannian
  manifolds.
\newblock {\em Communications on Pure and Applied Mathematics}, 12(1):1--11,
  1959.

\bibitem{greene1979c}
Robert~E Greene and H~Wu.
\newblock $c^\infty$ approximations of convex, subharmonic, and
  plurisubharmonic functions.
\newblock In {\em Annales Scientifiques de l'{\'E}cole Normale Sup{\'e}rieure},
  volume~12, pages 47--84, 1979.

\bibitem{grillo2014radial}
Gabriele Grillo and Matteo Muratori.
\newblock Radial fast diffusion on the hyperbolic space.
\newblock {\em Proceedings of the London Mathematical Society},
  109(2):283--317, 2014.

\bibitem{grillo2016smoothing}
Gabriele Grillo and Matteo Muratori.
\newblock Smoothing effects for the porous medium equation on cartan--hadamard
  manifolds.
\newblock {\em Nonlinear Analysis: Theory, Methods \& Applications},
  131:346--362, 2016.

\bibitem{grummt2012essential}
Robert Grummt and Martin Kolb.
\newblock Essential selfadjointness of singular magnetic schr{\"o}dinger
  operators on riemannian manifolds.
\newblock {\em Journal of Mathematical Analysis and Applications},
  388(1):480--489, 2012.

\bibitem{2014arXiv1401.4036G}
B.~{G{\"u}neysu}.
\newblock {Sequences of Laplacian cut-off functions}.
\newblock {\em ArXiv e-prints}, January 2014.

\bibitem{guneysu2016sequences}
Batu G{\"u}neysu.
\newblock Sequences of laplacian cut-off functions.
\newblock {\em The Journal of Geometric Analysis}, 26(1):171--184, 2016.

\bibitem{herrero1985cauchy}
Miguel~A Herrero and Michel Pierre.
\newblock The cauchy problem for $u_t=\delta u $ when $0< m< 1$.
\newblock {\em Transactions of the American Mathematical Society},
  291(1):145--158, 1985.

\bibitem{karcher1977riemannian}
Hermann Karcher.
\newblock Riemannian center of mass and mollifier smoothing.
\newblock {\em Communications on pure and applied mathematics}, 30(5):509--541,
  1977.

\bibitem{kato1972schrodinger}
Tosio Kato.
\newblock Schr{\"o}dinger operators with singular potentials.
\newblock {\em Israel Journal of Mathematics}, 13(1-2):135--148, 1972.

\bibitem{lebedev1965special}
Nikolai~Nikolaevich Lebedev, Richard~A Silverman, and DB~Livhtenberg.
\newblock Special functions and their applications.
\newblock {\em Physics Today}, 18:70, 1965.

\bibitem{leinfelder1981schrodinger}
Herbert Leinfelder and Christian~G Simader.
\newblock Schr{\"o}dinger operators with singular magnetic vector potentials.
\newblock {\em Mathematische Zeitschrift}, 176(1):1--19, 1981.

\bibitem{li1984p}
Peter Li and Richard Schoen.
\newblock L p and mean value properties of subharmonic functions on riemannian
  manifolds.
\newblock {\em Acta Mathematica}, 153(1):279--301, 1984.

\bibitem{lu2009local}
Peng Lu, Lei Ni, Juan-Luis V{\'a}zquez, and C{\'e}dric Villani.
\newblock Local aronson--b{\'e}nilan estimates and entropy formulae for porous
  medium and fast diffusion equations on manifolds.
\newblock {\em Journal de math{\'e}matiques pures et appliqu{\'e}es},
  91(1):1--19, 2009.

\bibitem{pigola2008vanishing}
Stefano Pigola, Marco Rigoli, and Alberto~G Setti.
\newblock {\em Vanishing and finiteness results in geometric analysis: a
  generalization of the Bochner technique}, volume 266.
\newblock Springer Science \& Business Media, 2008.

\bibitem{pigola2005maximum}
Stefano Pigola, Marco Rigoli, and Alberto~Giulio Setti.
\newblock {\em Maximum principles on Riemannian manifolds and applications},
  volume 822.
\newblock American Mathematical Soc., 2005.

\bibitem{rimoldi2016extremals}
Michele Rimoldi and Giona Veronelli.
\newblock Extremals of log sobolev inequality on non-compact manifolds and
  ricci soliton structures.
\newblock {\em arXiv preprint arXiv:1605.09240}, 2016.

\bibitem{schoen1994lectures1}
Richard Schoen and Shing-Tung Yau.
\newblock {\em Lectures on differential geometry}, volume~1.
\newblock International press Cambridge, 1994.

\bibitem{schoen1994lectures2}
Richard Schoen and Shing-Tung Yau.
\newblock {\em Lectures on differential geometry}, volume~2.
\newblock International press Cambridge, 1994.

\bibitem{shubin2001essential}
Mikhail Shubin.
\newblock Essential self-adjointness for semi-bounded magnetic schr{\"o}dinger
  operators on non-compact manifolds.
\newblock {\em Journal of Functional Analysis}, 186(1):92--116, 2001.

\bibitem{shubin1992spectral}
Mikhail~A Shubin.
\newblock Spectral theory of elliptic operators on non-compact manifolds.
\newblock 1992.

\bibitem{strichartz1983analysis}
Robert~S Strichartz.
\newblock Analysis of the laplacian on the complete riemannian manifold.
\newblock {\em Journal of functional analysis}, 52(1):48--79, 1983.

\bibitem{vazquez2006smoothing}
Juan~Luis V{\'a}zquez.
\newblock Smoothing and decay estimates for nonlinear parabolic equations of
  porous medium type.
\newblock {\em Oxford Lecture Notes in Maths and its Applications}, 33, 2006.

\bibitem{vazquez2007porous}
Juan~Luis V{\'a}zquez.
\newblock {\em The porous medium equation: mathematical theory}.
\newblock Oxford University Press, 2007.

\bibitem{vazquez2015fundamental}
Juan~Luis V{\'a}zquez.
\newblock Fundamental solution and long time behavior of the porous medium
  equation in hyperbolic space.
\newblock {\em Journal de Math{\'e}matiques Pures et Appliqu{\'e}es},
  104(3):454--484, 2015.

\bibitem{2013arXiv1304.4490W}
F.~{Wang} and X.~{Zhu}.
\newblock {On the structure of spaces with Bakry-$\backslash$'$\{$E$\}$mery
  Ricci curvature bounded below}.
\newblock {\em ArXiv e-prints}, April 2013.

\end{thebibliography}
\nocite{*}
\bibliographystyle{plain}

\end{document}